\documentclass{article}

\usepackage[round]{natbib}
\usepackage{xcolor}
\usepackage{amsmath,amssymb,amsthm,mathtools}
\usepackage{authblk}
\usepackage{booktabs}
\usepackage{enumitem}
\usepackage[colorlinks=true,linkcolor=blue,citecolor=blue,urlcolor=blue]{hyperref}
\usepackage{bbm}
\usepackage{cleveref}

\newtheorem{theorem}{Theorem}[section]
\newtheorem{proposition}[theorem]{Proposition}
\newtheorem{lemma}[theorem]{Lemma}
\newtheorem{corollary}[theorem]{Corollary}
\theoremstyle{definition}

\newtheorem{assumption}{Assumption}
\theoremstyle{remark}
\newtheorem{remark}[theorem]{Remark}

\Crefname{assumption}{Assumption}{Assumptions}

\DeclareMathOperator{\KL}{KL}
\DeclareMathOperator{\TV}{TV}

\newcommand{\Prob}{\mathbb P}
\newcommand{\E}{\mathbb E}
\newcommand{\tmix}{t_{\mathrm{mix}}}

\newcommand{\one}{\mathbf 1}

\newcommand{\Prb}{\mathbb{P}}

\begin{document}
\title{On the Impossibility of Estimating the Average Mixing Time}

\author{Nathan A. Judd}
\author{Amedeo Roberto Esposito}

\affil{ Okinawa Institute of Science and Technology, Japan }
\date{October 7, 2026}

\maketitle

\begin{abstract}
    We establish nearly-matching converse bounds on the sample complexity of estimating the average mixing time of a Markov chain from a single stationary trajectory. Unlike the worst-case mixing time, which maximises the total-variation distance to stationarity over all initial states, the average mixing time averages this distance under the stationary distribution and can therefore be substantially smaller. Our converse bounds establish when estimation is impossible, and match the key dependencies in existing achievability bounds, up to logarithmic factors. Together, these bounds provide nearly-tight PAC guarantees for estimating the average mixing time.
\end{abstract}

\section{Introduction}

%Existing work provides achievability results for the Ergodic and Uniformly Ergodic cases. 

Let $P$ be the transition kernel of a Markov chain on state space $\cal X$ with stationary distribution $\pi$, and let $P^t(x, \cdot)$ denote its distribution after $t\in\mathbb{Z}_+$ steps when initialised at $x$. The total-variation distance
$$
d_x(t) := \left\|P^t(x,\cdot)-\pi\right\|_{\mathrm{TV}}
$$
measures convergence of the Markov chain from the initial state $x$ to stationarity, and underpins the definitions of both the worst-case and average mixing times. The distance-to-stationarity function is defined as
$$
d_{\rm max}(t) :=\max_{x\in\cal X}d_x(t) = \max_{x\in \cal X} \left\|P^t(x,\cdot)-\pi\right\|_{\mathrm{TV}},
$$
where the slowest-to-converge initialisation is used. The associated worst-case mixing time is defined by
\begin{align}
    \label{eqn: worst-case mixing time}
    t_{\rm mix}(\xi) := \min\{t\in\mathbb{Z}_+:d_{\rm max}(t)\leq \xi\}
\end{align}

The definition is worst-case because it uses the slowest-converging initial state. Thus, once $t\geq \tmix(\xi)$, the conditional distribution $t$ steps later is within a distance $\xi$ of $\pi$, regardless of the state currently occupied by the chain. This uniform decorrelation guarantee underpins {\em i.i.d}-type concentration bounds for (Markov-)dependent data \citep{Paulin2015}, whose mixing-time penalty can be interpreted, for empirical averages, as a conservative reduction in effective sample size. This motivates the literature on estimating the worst-case mixing time and related spectral quantities from a single trajectory \citep{Hsu_et_al_2019_AoAP,wolfer_19_ALT,Wolfer2021}.

However, the uniform nature of this guarantee can be unnecessarily pessimistic for a stationary trajectory. The maximum assigns full importance to the slowest initial state, even when that state has negligible stationary probability and is, thus, rarely observed. To capture the average dependence encountered along a stationary trajectory, consider the $\beta$-mixing coefficient
\begin{align*}
    %\label{eqn: beta-definition}
    \beta(t) := \sum_{x\in\cal X}\pi(x)d_x(t) = \sum_{x\in\cal X}\pi(x)\left\|P^t(x,\cdot)-\pi\right\|_{\mathrm{TV}},
\end{align*}
which averages the distance to stationarity across initial states according to their stationary probabilities. Then, define the average mixing time for all $\xi\in(0,1)$ by
\begin{align}
    \label{eqn: average mixing time}
    \tmix^\sharp(\xi):=\min\{t\in\mathbb{Z}_+:\beta(t)\leq \xi\}.
\end{align}

Naturally, $\tmix^\sharp(\xi)\leq \tmix(\xi)$ for every chain and all $\xi\in(0,1)$. While worst-case mixing requires convergence from every state, average mixing measures convergence from a stationary-random initial state. When convergence varies substantially across initial states, particularly when slowly mixing states have little stationary mass, the average mixing time can be significantly smaller than its worst-case counterpart \citep{MunchSalez23,wolfer2026optimisticestimationconvergencemarkov}.

The $\beta$-mixing coefficient has been studied independently from the average mixing time and measures temporal dependence in stationary Markov chains. Bounds on $\beta(t)$ yield deviation guarantees for empirical averages $m^{-1}\sum_{s=1}^m f(X_s)$ around their stationary expectations $\pi(f)$, across broad classes of functions $f$ \citep{wolfer2026optimisticestimationconvergencemarkov}. Thus our results are relevant in modern computational statistics and statistical modelling, including summarising MCMC and hidden Markov models. Our converses quantify the difficulty of learning this dependence from a single, stationary trajectory. Removing the stationary assumption, and adding an initialisation penalty, akin to Appendix A in \cite{wolfer2026optimisticestimationconvergencemarkov} remains open.

%The average dependence measured by $\beta(t)$ also has a direct statistical interpretation. Many estimators based on a Markov chain trajectory are empirical averages of the form $m^{-1}\sum_{s=1}^m f(X_s)$, and are used to estimate the stationary expectation $\pi(f)$ of a function $f$. The corresponding sum is called an additive functional of the chain. Therefore, the bounds presented here apply to general estimation problems in Markov chains, not just estimating the average mixing time itself. Bounds on the $\beta$-mixing coefficients control how temporal dependence affects the deviation of such empirical averages from $\pi(f)$ \citep{wolfer2026optimisticestimationconvergencemarkov}. Thus, although average mixing is weaker than worst-case mixing, it remains a meaningful measure of the dependence relevant to estimation from stationary Markov chain data.

Writing $\tmix:=\tmix(1/4)$, the function $d_{\rm max}(t)$ satisfies \(d_{\rm max}(s+t)\leq2d_{\rm max}(s)d_{\rm max}(t)\) \citep[Lemma 4.10 and Exercise 4.3]{levin2017markov}, which underpins the useful bound $\tmix(\xi)\leq\lceil\log_2(1/\xi)\rceil\tmix$ \citep[Equation 4.34]{levin2017markov}.
%Writing $\tmix:=\tmix(1/4)$, since \(d_{\rm max}(\tmix)\leq1/4\), iteration gives \(d_{\rm max}(k\tmix)\leq2^{-k}\) for every integer \(k\in\mathbb{N}\). Taking \(k=\lceil\log_2(1/\xi)\rceil\) therefore gives
This is why $\tmix$ is often fixed by convention at the threshold $1/4$. The $\beta$-mixing coefficient does not satisfy an analogous property \citep{wolfer2026optimisticestimationconvergencemarkov}, so the threshold $\xi$ must remain explicit for the average mixing time and necessitates the use of a threshold-explicit loss function. Define the interval $I_P(\xi,\epsilon):=[\tmix^\sharp(\xi(1+\epsilon)),\tmix^\sharp(\xi(1-\epsilon))]$, where $\epsilon$ represents a multiplicative tolerance in estimating the $\beta$-mixing coefficient at the threshold $\xi$. We employ the loss function
\begin{align}
\label{eqn:loss}
\mathcal{L}(\xi,\epsilon) := 
\mathbbm{1}\big(\widehat {\tmix^\sharp}(\xi)\notin I_P(\xi,\epsilon)\big).
\end{align}
Thus the loss is zero when the estimated mixing time lies inside the interval $I_P(\xi,\epsilon)$.

Define $P^{s}(x,y)$ to be the probability of transitioning from state $x$ to $y$ in $s$ steps. Then, the law $Q_P^{(s)}(x,y):=\pi(x)P^s(x,y)$ represents the joint probability of transitioning from $x$ to $y$ in $s$ steps, under stationarity. Now, define the half-norm of $Q_P^{(s)}$:
\begin{align}
 \mathcal J_\infty^{(s)}(P)
 &:=\left(\sum_{x,y}\sqrt{Q_P^{(s)}(x,y)}\right)^2.
 \label{eq:lag-pair-effective-support}
\end{align}
Further define the supremum of this object over lags before the slower mixing time as,
\begin{equation}
 \mathcal J_{\infty,\rm env}(P)
 :=\sup_{1\leq s\leq t_{\mathrm{mix},P}^{\sharp}(\xi(1-\epsilon))}
 \mathcal J_\infty^{(s)}(P),
 \label{eq:effective-support-envelope}
\end{equation}
where we suppress the chain dependence when clear.

$\mathcal{J}_{\infty,\rm env}$ represents the effective-support of the stationary $s$-skipped transition law, and appears in the existing achievability result presented below. In particular, \cite{wolfer2026optimisticestimationconvergencemarkov} show that for uniformly ergodic Markov chains \begin{align}
\label{eqn:achievability-bound-wolfer}
\widetilde{O}\bigg(1+\frac{\tmix}{(\xi\epsilon)^2}\max\big\{C_{\rm u.e.}^2\mathcal J_{\infty,\rm env},576\log\frac{4\tmix}{\delta}\}\bigg)
\end{align}
samples suffice to guarantee accurate estimation of $\tmix^\sharp(\xi)$ in the loss \eqref{eqn:loss} with confidence $1-\delta$, where $C_{\rm u.e.}^2$ is a universal constant (specific to the uniformly ergodic class in \cite{wolfer2026optimisticestimationconvergencemarkov}). The tilde notation suppresses logarithmic factors. Whether these sample-size guarantees reflect the statistical difficulty of the problem, rather than only the analysis of a particular estimator, requires matching lower bounds.

%in particular, a $\log_2\big(\frac{1}{\xi(1-\epsilon)}$ term before the maximum and a $\lceil\log_2(1/\xi)\rceil$ term inside the logarithmic of the second argument of the maximum. Whether these sample-size guarantees reflect the statistical difficulty of the problem, rather than only the analysis of a particular estimator, requires matching lower bounds.

%\begin{align}
%O\bigg(1&+\frac{\tmix(\xi)\log_2\big(\frac{1}{\xi(1-\epsilon)}\big)}{(\xi\epsilon)^2}\times\\
%&\max\big\{C_{\rm u. erg.}^2\mathcal J_{\infty,\rm env},576\log\frac{4\tmix(1/4)\lceil\log_2(1/\xi)\rceil}{\delta}\big\}\bigg)
%\end{align}

Our first converse relates to a constructed $2^k$-valued ($k\in \mathbb{N}$) Markov chain, inspired by \citep{Levin_Tankala_2023}, which yields a sample complexity of
\begin{align}
\label{eqn: dr sample complexity}
\widetilde \Omega\bigg(1+\frac{\tmix}{(\xi\epsilon)^2}\log\frac{1}{4\delta}\bigg),
\end{align}
where $\delta$ is the failure probability. In simple terms, \eqref{eqn: dr sample complexity} means that, ignoring constants, if the sample size is below \eqref{eqn: dr sample complexity}, then no estimator can guarantee failure probability at most $\delta$ for all admissible chains. More precisely, $\nexists~\widehat t_m^\sharp$ such that $\sup_{P\in\mathcal{P}}\Prob_P^{(m)}\{\widehat t_m^\sharp\notin I_P(\xi,\epsilon)\}\leq\delta$ holds uniformly across the class of Markov kernels $\mathcal{P}$.

Our second converse uses a metastable $D$-state ($D\geq 64$) Markov chain (with \lq\lq checkerboarding\rq\rq left-to-right region transitions), yields a sample complexity of \begin{align}
    \label{eqn: checkerboard sample complexity}
    \widetilde \Omega \bigg(1+\frac{\tmix}{\epsilon^2}\max\bigg\{\frac{\mathcal J_{\infty,\rm env}}{\log \mathcal J_{\infty,\rm env}},\log\frac{1}{4\delta}\bigg\}\bigg).
\end{align}
The former converse, \eqref{eqn: dr sample complexity}, holds uniformly over both the threshold $\xi$ and the relative accuracy $\epsilon$, whereas, the latter converse, \eqref{eqn: checkerboard sample complexity}, holds for a fixed threshold and narrow $\epsilon$ range. Hence, the converses together say much more than they do individually, and there is still more work to be done to converge on a single impossibility bound.

Taken together, and within the threshold and accuracy regimes stated above, our converse bounds recover the uniformly ergodic achievability scaling for estimating the average mixing time. In particular, the linear (worst-case) mixing-time, inverse-quadratic threshold-accuracy, effective-support, and confidence dependencies of the achievability bound are all matched up to logarithmic factors. For ergodic Markov chains, the achievability result of \citet[Theorem~3.5(i)]{wolfer2026optimisticestimationconvergencemarkov} has a core  $1/(\xi\epsilon)^4$ threshold-accuracy scaling, which is a factor of $1/(\xi\epsilon)^2$ larger than both their uniformly ergodic achievability result (Theorem 3.5(ii)) and our impossibility result (\Cref{thm:register-lower} herein). Whether this discrepancy reflects looseness in the upper bound remains open.

\section{Related Work}

Statistical estimation of the worst-case mixing time and related objects has been studied extensively and thoroughly \citep{Hsu_et_al_2019_AoAP,wolfer_19_ALT,Wolfer2021}. The average mixing time was introduced in \citep{MunchSalez23} and upper bounds on estimating it were established in the preprint \citep{wolfer2026optimisticestimationconvergencemarkov}.

For reversible chains, finite-sample estimation of the worst-case mixing time is closely connected to estimating the absolute spectral gap and the least stationary mass \citep{Hsu_et_al_2019_AoAP}. More generally, transition-kernel estimation and instance-dependent confidence intervals quantify the cost of learning many conditional distributions from one dependent path \citep{wolfer_19_ALT,Wolfer2021,wolfer2024scandi}.

\section{Definitions and notation}

Let $\mathbb N=\{1,2,\ldots\}$ be the set of positive integers. The chains constructed below have a finite or countable state space $\mathcal{X}$.

We observe a single stationary trajectory $X^{(m)}:=(X_1,\ldots,X_m)$, so \(X_1\sim\pi\) and then \(X_{i+1}\mid X_i\sim P(X_i,\cdot)\) for every $i\in\{1,2,...,m-1\}$; thus the data contain \(m-1\) transitions (with self-loops allowed).

For probability measures $\mu,\nu$ on $\mathcal X$, write $\|\mu-\nu\|_{\TV}:=\frac12\sum_{x\in\mathcal X}|\mu(x)-\nu(x)|$. For $0<p,q<1$, write
\begin{align*}
 d_{\rm KL}(p\|q)
 &:=p\log\frac{p}{q}+(1-p)\log\frac{1-p}{1-q}
    %\label{eqn:bernoulli-kl}
\end{align*}
The KL divergence between a Rademacher($u$) and a Rademacher($v$) random variable ($u,v\in [-1,1]$) is
\begin{align}
d_{\rm KL}\bigg(\frac{1+u}{2}\bigg\| \frac{1+v}{2}\bigg)&\leq\frac{(u-v)^2}{1-v^2}.
\label{eq:rademacher-kl-bound}
\end{align}

For a stationary trajectory $X^{(m)}$, with $m\geq1$,
let $\Prob_P^{(m)}$ denote its path law. Throughout, $\widehat t_m^\sharp$
denotes a possibly randomised $\mathbb N$-valued estimator based on
$X^{(m)}$.

\section{Formal results}

Our converse bounds are constructed from uniformly ergodic Markov chains and thus also apply to the class of ergodic chains.

The delayed-register construction in \Cref{thm:register-lower}, inspired by \cite{Levin_Tankala_2023}, produces statistically close chains whose $\beta$-mixing coefficients lie on opposite sides of the tolerance band $\xi(1\pm\epsilon)$ at a lag comparable to their mixing times, thereby realising a standard two-point testing bottleneck: inverse-quadratic in the tolerance-band width $\xi\epsilon$.

The rare-refresh checkerboard class underlying \Cref{cor:rr-matched-maximum} exposes an effective-support bottleneck, in which information about dependence is spread over many pairs of states observed at the relevant lags along a stationary trajectory, albeit for a fixed threshold and restricted accuracy range.

\subsection{Delayed-register chain}\label{sec:dr-chain}

For $k\in\mathbb N$, let $\mathcal X_k=\{-1,+1\}^k$. For
\(a\in[0,1)\), define the delayed-register kernel
\(P^{\mathrm{dr}}_{k,a}\) on \(\mathcal X_k\) by
\begin{equation}
 \begin{aligned}
 (x_1,\ldots,x_k)&\longmapsto(x_2,\ldots,x_k,Z),\\
 \Prob_a(Z=z\mid x)&=\frac{1+ax_1z}{2}.
 \end{aligned}
 \label{eq:register-kernel}
\end{equation}
Equivalently, \(Z=x_1E\), where \(E\) is an independent Rademacher
variable with mean \(a\). Let \(U_k\) be the uniform law on
\(\mathcal X_k\), and abbreviate
\(\beta_a:=\beta_{P^{\mathrm{dr}}_{k,a}}\). Define
\begin{align}
 B_k(a)
 &:=2^{-k-1}\sum_{j=0}^k\binom{k}{j}
 \left|(1+a)^j(1-a)^{k-j}-1\right|,
 \label{eq:Bk-definition}\\
 B_k^{-1}(b)&:=\inf\{a\in[0,1):B_k(a)\geq b\}.
 \label{eq:Bk-inverse}
\end{align}

For \(0<\xi,\epsilon\leq1/4\), construct the two chains as follows
\begin{equation}
 \begin{aligned}
 a_0&=B_k^{-1}\{\xi(1-\epsilon)\},
 &a_1&=B_k^{-1}\{\xi(1+2\epsilon)\},\\
 P_i^{\mathrm{dr}}&:=P^{\mathrm{dr}}_{k,a_i},
 &&i\in\{0,1\}.
 \end{aligned}
 \label{eq:register-pair}
\end{equation}

\begin{theorem}[Delayed-register]
\label{thm:register-lower}
There exists \(K_0\in\mathbb N\) such that, for all
\(0<\xi,\epsilon\leq1/4\), every \(k\geq K_0\), every
\(m\geq1\), all \(0<\delta<1/4\), and every estimator
\(\widehat {t}_m^\sharp\), then uniform risk at most $\delta$ implies, for each \(i\in\{0,1\}\),
\begin{equation}
 m-1\geq
 \frac{t_{\mathrm{mix},P_i^{\mathrm{dr}}}}
      {48\pi e^{34}\xi^2\epsilon^2}
 \log\frac1{4\delta}.
 \label{eq:register-rate}
\end{equation}
\end{theorem}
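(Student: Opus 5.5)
We prove the statement by a two-point (Le Cam) testing argument on the pair $P_0^{\mathrm{dr}},P_1^{\mathrm{dr}}$ from \eqref{eq:register-pair}. The plan has four steps:
\begin{enumerate}
\item compute $\beta_a$ exactly for the delayed-register chain;
\item show that the two target intervals $I_{P_i}(\xi,\epsilon)$ are disjoint, so any estimator with uniform risk $\delta$ yields a test between $P_0$ and $P_1$;
\item bound the KL divergence between the two path laws;
\item relate $a_1-a_0$ and $\tmix$ to $\xi\epsilon$ and $k$, and combine.
\end{enumerate}

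\textbf{Step 1: exact formula for $\beta_a$.} The uniform law $U_k$ is stationary, since $Z$ is a fair sign once $x_1$ is uniform. After $t\ge k$ steps the register is $(x_{t-k+1}E_{t-k+1},\dots)$ in an obvious sense. More precisely, each coordinate is $x_j\prod E$ along a chain of length $\lceil (t-j+1)/k\rceil$. Hence, at lag $t=k$, $P^k(x,\cdot)$ is the product law $y\mapsto\prod_j\frac{1+ax_jy_j}{2}$. Its TV distance to $U_k$ is independent of $x$ and equals exactly $B_k(a)$ in \eqref{eq:Bk-definition}. For $t<k$, $P^t(x,\cdot)$ has $k-t$ deterministic coordinates, so $\beta_a(t)\ge 1-2^{-(k-t)}>1/2>\xi(1+\epsilon)$. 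For $t>k$, the effective correlation drops to $a^{\lceil\cdot\rceil}$, so $\beta$ decreases with lag in blocks.

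\textbf{Step 2: disjoint intervals.} Using continuity and monotonicity of $B_k$ (strictly increasing on $[0,1)$ from $0$), we get $\beta_{a_0}(k)=\xi(1-\epsilon)$ and $\beta_{a_1}(k)=\xi(1+2\epsilon)$. Consequently $\tmix^\sharp_{P_0}(\xi(1-\epsilon))=k$, so $I_{P_0}\subseteq[\cdot,k]$. On the other side, $\beta_{a_1}(k)>\xi(1+\epsilon)$ forces $\tmix^\sharp_{P_1}(\xi(1+\epsilon))\ge k+1$, so $I_{P_1}\subseteq[k+1,\cdot]$. With disjoint intervals, the standard Bretagnolle--Huber bound gives
\[
2\delta\ \ge\ \Prob_0(\hat t\notin I_0)+\Prob_1(\hat t\notin I_1)\ \ge\ \tfrac12 e^{-\KL(\Prob_0^{(m)}\|\Prob_1^{(m)})},
\]
that is, $\KL\ge\log\frac1{4\delta}$.

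\textbf{Step 3: KL of the path laws.} Both chains share the stationary law $U_k$. The chain rule gives
\[
\KL\big(\Prob_0^{(m)}\,\big\|\,\Prob_1^{(m)}\big)=(m-1)\,\E_{U}\,d_{\rm KL}(\cdot\|\cdot),
\]
where the per-step term compares $\mathrm{Rad}(a_0x_1)$ with $\mathrm{Rad}(a_1x_1)$. By \eqref{eq:rademacher-kl-bound} this is at most $(a_1-a_0)^2/(1-a_1^2)$. Combining with Step 2,
\[
(m-1)\,\frac{(a_1-a_0)^2}{1-a_1^2}\ \ge\ \log\frac1{4\delta}.
\]

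\textbf{Step 4: quantitative link between $a_1-a_0$, $\xi\epsilon$ and $\tmix$ (main obstacle).} We need
\[
\frac{(a_1-a_0)^2}{1-a_1^2}\ \le\ 48\pi e^{34}\,\frac{\xi^2\epsilon^2}{\tmix}.
\]
We also need $\tmix_{P_i}\asymp k$; indeed $\tmix\ge k$ because of the deterministic coordinates, and the upper side holds for large $k$ since $a_i$ is small. For large $k$ (this is where $K_0$ enters) and $a=O(1/\sqrt k)$, a local CLT for the binomial sum in $B_k$ gives
\[
B_k(a)\approx \E\big|e^{a\sqrt k\,G-ka^2/2}-1\big|/2,\qquad G\sim N(0,1).
\]
Its derivative in $a$ is of order $\sqrt k$ times a bounded-below factor whenever $B_k(a)\in[\xi/2,1/2]$. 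The mean value theorem then yields $a_1-a_0\le C\xi\epsilon/\sqrt k$, hence $(a_1-a_0)^2\lesssim \xi^2\epsilon^2/k\lesssim\xi^2\epsilon^2/\tmix$.

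The delicate part is making the derivative lower bound $B_k'(a)\ge c\sqrt k$ hold uniformly, including small $\xi$ where $a\sqrt k$ is small. There $B_k(a)\approx a\sqrt k\,\E|G|/2=a\sqrt{k/(2\pi)}$, which explains the $\pi$ in the constant. The plan for this step is:
\begin{itemize}
\item get explicit (crude) binomial-to-Gaussian comparisons via Stirling bounds, which is where the large $e^{34}$ comes from;
\item verify that $1-a_1^2$ stays bounded below, since $a_1\le C/\sqrt k\le1/2$ for $k\ge K_0$;
\item for the mixing-time comparison, bound $\tmix_{P_i}\le 2k$ (or $ck$) using $B_k(a_i)\le1/4$ and product-coupling at lags beyond $k$. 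The factor $48$ would absorb this ratio together with the Gaussian constants.
\end{itemize}
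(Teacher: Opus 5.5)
Your skeleton---disjoint success sets separated at lag $k$, Bretagnolle--Huber, the exact path divergence $(m-1)\,d_{\rm KL}\bigl(\frac{1+a_0}{2}\big\|\frac{1+a_1}{2}\bigr)$ with the $\chi^2$ bound, and an inverse-Lipschitz estimate $a_1-a_0\lesssim\xi\epsilon/\sqrt k$ from a slope bound $B_k'\geq c\sqrt k$---is the paper's, and Steps 1--3 are sound.

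The genuine gap is Step 4. It carries the whole quantitative content of the theorem, and you leave it as a heuristic. Comparing $B_k(a)$ with the Gaussian functional $\frac12\E|e^{a\sqrt k G-ka^2/2}-1|$ and differentiating the comparison proves nothing about $B_k'$, for two reasons. First, closeness of two functions does not transfer to their derivatives. Second, a CLT-type comparison of $B_k$, even atom by atom, has an additive error of order $k^{-1/2}$. That error swamps the increment $B_k(a_1)-B_k(a_0)=3\xi\epsilon$ once $\xi\epsilon\ll k^{-1/2}$, and $K_0$ must not depend on $\xi,\epsilon$.

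The missing idea is an exact, pointwise formula for the slope. With $S=\sum_iZ_i$ and $X=(k+S)/2$, the total variation between the mean-$a$ product law and $U_k$ is attained on the likelihood-ratio set $\{S>kh(a)\}=\{X\geq j_k(a)\}$. This is an upper binomial tail, so
$B_k(a)=\Prob\{\operatorname{Bin}(k,\frac{1+a}2)\geq j_k(a)\}-\Prob\{\operatorname{Bin}(k,\frac12)\geq j_k(a)\}$.
Wherever $j_k$ is locally constant, this gives $B_k'(a)=\frac k2\Prob\{\operatorname{Bin}(k-1,\frac{1+a}2)=j_k(a)-1\}$, a single binomial atom. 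Because $a/2\leq h(a)<a$, that atom lies within $O(\sqrt k)$ of the mean whenever $a\leq3/\sqrt k$. Robbins' form of Stirling's formula then bounds it below by $c_A/\sqrt k$. This is a relative estimate, uniform over $a\in[0,3/\sqrt k]$ and hence over $\xi$. Integrating the almost-everywhere derivative of the absolutely continuous $B_k$ (it has kinks, so a bare mean value theorem needs care) yields the inverse-Lipschitz bound with $L=\sqrt{2\pi}e^{17}$, whence $12L^2=24\pi e^{34}$.

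Two further claims you assert without proof rest on the same analysis.
\begin{itemize}
\item The localisation $B_k^{-1}([0,3/8])\subseteq[0,3/\sqrt k]$ is needed both for the slope bound and for $1-a_1^2\geq3/4$ in Step 3. The paper obtains $B_k(3/\sqrt k)>2/5$ from Cantelli's inequality applied to $S$.
\item For $t_{\mathrm{mix},P_1^{\mathrm{dr}}}\leq2k$, your justification via $B_k(a_i)\leq1/4$ fails for $i=1$, since $\beta_{a_1}(k)=\xi(1+2\epsilon)$ can be as large as $3/8$. What works is the block identity $\beta_{a_1}(2k)=B_k(a_1^2)$, combined with $B_k(u)\leq u\sqrt k$ (Pinsker) and $a_1\leq L\xi(1+2\epsilon)/\sqrt k$. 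Together these force $\beta_{a_1}(2k)\leq\xi(1-\epsilon)\leq1/4$ for all $k$ beyond a threshold independent of $\xi,\epsilon$. You also need the fact that worst-case and average distances coincide for this chain.
\end{itemize}
Settling for $t_{\mathrm{mix}}\leq ck$ with $c>2$ would recover the stated constant $48\pi e^{34}$ only if your slope constant were correspondingly better than the paper's.
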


\Cref{thm:register-lower} captures the core estimation and confidence bottlenecks uniformly in $(\xi,\epsilon)$, but does not capture the effective-support bottleneck appearing in~\eqref{eqn:achievability-bound-wolfer}.

\subsection{Rare-refresh checkerboard chain}

Fix $\xi_0:=1/20$. The rare-refresh checkerboard has two metastable regions. Most transitions hold the current state exactly; with probability $q=1/R$, a refresh switches regions, using a checkerboard distribution from left to right and a uniform distribution from right to left. Thus $R$ controls both the worst-case and average mixing scales on this class, while the information determining the $\beta$-mixing coefficient is spread across order $D^2$ possible left-to-right source--destination pairs $(X_t,X_{t+1})$, with $X_t$ the source and $X_{t+1}$ the destination. A common-coefficient parametrisation yields the confidence term, whereas independently varying moment-matched coefficients yield the effective-support term. Consequently, at $\xi_0$ and in the stated accuracy range, uniform risk at most $\delta$ over the union of the two hard subclasses informally requires \eqref{eqn: checkerboard sample complexity} samples.  
\begin{assumption}[General rare-refresh checkerboard chain conditions]
\label{asm:1}
Let \(R\geq100\) be divisible by ten and let \(D\geq64\) be divisible
by eight.
\end{assumption}

Let \(P_i:=P_{c_i}\),
\(i\in\{0,1\}\), be the common-coefficient pair of Markov chains constructed in
Subsection~\ref{subsec:rr-confidence} and define the positive integers
\begin{equation}
 M=\frac{D^2}{32},\qquad
 \ell=2\left\lceil2\log(40M)\right\rceil.
 \label{eq:rr-size-parameters}
\end{equation}

Let $c_{\rm acc},C_{\rm size},c_{\rm inf}>0$ be the universal constants defined in Appendix \ref{app:rr-constants}.

\begin{assumption}[Varying-coefficient checkerboard conditions]
\label{asm:2}
Let \(R_0\) and \(D_0\) be sufficiently large positive integers. Assume the conditions in \Cref{asm:1} hold, \(R\geq R_0\), \(D\geq D_0\), with \(M\geq C_{\rm size}\ell^2\) and \(0<\epsilon\leq1/4\) satisfying $\epsilon\leq 20c_{\rm acc}/\ell$.
\end{assumption}
Note that \Cref{asm:2} implies $\epsilon \lesssim 10^{-6}$.

Let \(\mathcal P_0,\mathcal P_1\) be the varying-coefficient subclasses constructed in Subsection~\ref{subsec:varying-rare-refresh}. Define the hard class
\begin{equation}
 \mathcal C_{R,D,\epsilon}
 :=\{P_0,P_1\}\cup\mathcal P_0\cup\mathcal P_1.
 \label{eq:rr-hard-class}
\end{equation}
Every member \(P\in\mathcal C_{R,D,\epsilon}\) is a finite-state uniformly
ergodic Markov kernel on \(D\) states and satisfies
\begin{align}
 2R&<t_{\mathrm{mix},P}^{\sharp}(\xi_0)\leq10R,
 \label{eq:rr-theorem-target-scale}\\
 D^2(1-\xi_0(1-\epsilon))^2
 &\leq\mathcal J_{\infty,\rm env}(P)\leq D^2.
 \label{eq:rr-theorem-J-scale}
\end{align}

\begin{theorem}[Common-coefficient rare-refresh checkerboard]
\label{thm:rr-confidence-component}
Under \Cref{asm:1}, for all
\(0<\epsilon,\delta\leq1/4\), every \(m\geq1\), and
every estimator \(\widehat t_m^\sharp\), uniform risk of at most $\delta$, implies, for each \(i\in\{0,1\}\),
\begin{equation}
 m-1\geq
 \frac{t_{\mathrm{mix},P_i}}{1320\,\epsilon^2}
 \log\frac1{4\delta}.
 \label{eq:rr-confidence-component-rate}
\end{equation}
\end{theorem}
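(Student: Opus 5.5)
The plan is a two-point (Le Cam) testing argument applied to the common-coefficient pair $P_0=P_{c_0}$, $P_1=P_{c_1}$ of Subsection~\ref{subsec:rr-confidence}, with the confidence dependence obtained through the Bretagnolle--Huber inequality.

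\textbf{Step 1: separation.} I first choose the coefficients $c_0,c_1$ so that the target intervals are disjoint, that is,
\[
I_{P_0}(\xi_0,\epsilon)\cap I_{P_1}(\xi_0,\epsilon)=\emptyset.
\]
Concretely, I arrange that at every lag $t$ in the relevant window one has $\beta_{P_0}(t)>\xi_0(1+\epsilon)$ whenever $\beta_{P_1}(t)\le\xi_0(1-\epsilon)$. Since every kernel holds its state except for a refresh with probability $q=1/R$, $\beta_{P_c}(t)$ should be an explicit function of the form $(1-q)^t\,g(c)+\text{(mixing remainder)}$. Here $g$ measures the common checkerboard bias of the left-to-right refresh. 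Making the coefficient gap $|c_1-c_0|$ of order $\epsilon$ moves $\beta$ by a relative amount of order $\epsilon$ around $\xi_0=1/20$. This yields the ordering
\[
t^\sharp_{P_1}(\xi_0(1-\epsilon))<t^\sharp_{P_0}(\xi_0(1+\epsilon)).
\]

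\textbf{Step 2: reduction to testing.} Given an estimator $\widehat t_m^\sharp$ with uniform risk at most $\delta$, I define the test $\psi:=\mathbbm 1\{\widehat t_m^\sharp\notin I_{P_0}\}$. Disjointness of the intervals makes both of its error probabilities at most $\delta$. Bretagnolle--Huber then gives
\[
2\delta\ \ge\ \tfrac12\exp\!\big(-\KL(\Prob^{(m)}_{P_0}\|\Prob^{(m)}_{P_1})\big),
\]
hence
\[
\KL\ \ge\ \log\frac1{4\delta}.
\]
I apply the same bound with the roles of $P_0$ and $P_1$ swapped to get the statement for each $i$.

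\textbf{Step 3: chain rule for KL.} Both chains share the stationary law $\pi$, which I will check from the construction: the checkerboard and the uniform refresh are arranged so that the region masses coincide. The chain rule therefore gives
\[
\KL=(m-1)\sum_x\pi(x)\,\KL\big(P_0(x,\cdot)\|P_1(x,\cdot)\big).
\]
Rows in the right region are identical under both kernels. A left row is $(1-q)\delta_x+q\,\nu_{c}$, where $\nu_c$ is a checkerboard law with $\pm c$ modulation. Convexity, or direct computation, bounds its KL by $q\,\KL(\nu_{c_0}\|\nu_{c_1})$. Applying \eqref{eq:rademacher-kl-bound} cellwise bounds this in turn by $q\,(c_0-c_1)^2/(1-c_1^2)$, which is $O(\epsilon^2/R)$.

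\textbf{Step 4: conversion to $\tmix$.} Finally, I relate $R$ to the worst-case mixing time by proving $t_{\mathrm{mix},P_i}\le C\,R$ for an explicit constant $C$, using the coupling time of a refresh (geometric with mean $R$, and two refreshes suffice) together with \eqref{eq:rr-theorem-target-scale}. Combining Steps 2--4 gives
\[
(m-1)\,C'\epsilon^2/R\ \ge\ \log\frac1{4\delta},
\]
which yields \eqref{eq:rr-confidence-component-rate} once the constants are collected into $1320$.

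The main obstacle is Step 1: showing that a coefficient gap of order $\epsilon$ genuinely pushes $\beta$ across the band $\xi_0(1\pm\epsilon)$ for all $\epsilon\le1/4$. This requires a clean closed form for $\beta_{P_c}(t)$, whose total-variation terms involve absolute values. The first thing I would try is to choose the checkerboard so that all signs are constant, which makes $\beta$ affine in $c$. That linearity makes the separation, and hence the constant tracking behind $1320$, explicit.
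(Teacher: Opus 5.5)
Your Steps~2--3 match the paper's reduction: thresholding on disjoint success sets, Bretagnolle--Huber giving $\KL\geq\log\frac1{4\delta}$, the chain rule under the common uniform law, parameter-free right rows, and a left row contributing exactly $q\,d_{\rm KL}((1+c_0)/2\|(1+c_1)/2)$. The uniform law is common because every $P_\theta$ is doubly stochastic; equal region masses alone would not suffice.

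The genuine gap is Step~1, which carries the theorem's content, and the structure you guess for $\beta_{P_c}$ is not the right one. Write $P_c=(1-q)I_D+qS_c$, where $S_c$ has off-diagonal blocks $K_{\theta(c)}$ and $U_d$. Since $K_{\theta(c)}U_d=U_dK_{\theta(c)}=U_d$, one gets $S_c^2=\operatorname{diag}(U_d,U_d)$, so every term with two or more refreshes is parameter-free. Hence $c$ enters only through the exactly-one-refresh weight $w_t=tq(1-q)^{t-1}$, and taking absolute values row by row gives the hinge \eqref{eq:rr-common-profile}, $\beta_c(t)=b_t+\frac14(w_t|c|-p_t/2)_+$. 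This is affine in $c$ only beyond a kink $|c|>p_t/(2w_t)$ that moves with $t$. The checkerboard is fixed by the construction, so the lever is the lag, not the sign pattern.

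More importantly, your claim that an order-$\epsilon$ coefficient gap moves $\beta$ across the band presupposes a lag at which the attainable range $[b_t,\,b_t+\frac14(w_t-p_t/2))$ contains the whole band $\xi_0(1\pm\epsilon)$, uniformly in $\epsilon\leq1/4$. This is necessary, since disjointness forces the two chains to straddle the band at the crossing time of the faster one. It fails at most lags: at $t=2R$ every coefficient gives $\beta_c(t)>0.12$, and at $t=5R$ every coefficient gives $\beta_c(t)<0.016$.

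\Cref{lem:rr-confidence-calibration} works at $s_{0,1}=33R/10$. There $p/(2w)<0.006$, $b<0.037<\xi_0(1-\epsilon)$, and $b+\frac14(0.99w-p/2)>0.063125\geq\xi_0+\frac{21}{20}\eta$. It then solves the affine equations for $0<c_0<c_1<0.99$ with $c_1-c_0=41\eta/(5w)<71\eta$. The bound $c_1<0.99$ matters: at $\epsilon=1/4$ the upper coefficient is far from $0$, so the factor $1/(1-c_1^2)$ in your Step~3 must be bounded explicitly, and this produces the KL constant $132000$. You also need $t\mapsto\beta_c(t)$ to be nonincreasing (total-variation contraction). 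That is what turns the single-lag values $\beta_{c_0}(s_{0,1})=\xi_0-\eta$ and $\beta_{c_1}(s_{0,1})=\xi_0+\frac{21}{20}\eta$ into the crossing-time ordering.

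Step~4 also fails as written. Two refreshes do not suffice to couple. After two or more refreshes the position is uniform within a region, but the region is fixed by the parity of the refresh count ($S_c^3$ has off-diagonal blocks $U_d$), so the imbalance $p_t=(1-2q)^t$ must be controlled too. Moreover, \eqref{eq:rr-theorem-target-scale} bounds $t^\sharp_{\mathrm{mix}}(\xi_0)$, which never exceeds the worst-case time at the same threshold, so it gives no upper bound on $t_{\mathrm{mix}}$.

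The paper instead uses the row envelope $b_t\leq d_{\rm max}(t)\leq b_t+w_t/2$ from the same expansion, evaluated at $t=4R$, to get $t_{\mathrm{mix},P_i}\leq4R$ (\Cref{lem:checkerboard-mixing-scales}). This exact value matters, because $1320=4\cdot132000/400$ with $\eta=\epsilon/20$. An unspecified $C$ and an uncomputed KL constant cannot simply be ``collected into $1320$''; the stated rate needs $C=4$ together with the calibration above.
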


\Cref{thm:rr-confidence-component} in isolation is a less informative lower bound than \Cref{thm:register-lower}, but when combined with \Cref{thm:rr-support-component} below, to make \Cref{cor:rr-matched-maximum}, it elicits joint scaling between mixing time, relative accuracy, effective-support and confidence.

\begin{theorem}[Varying-coefficient rare-refresh checkerboard]
\label{thm:rr-support-component}
For $R,D,M,\ell$ and $0<\epsilon\leq 1/4$ satisfying \Cref{asm:2}, and every \(m\geq1\), every
\(0<\delta\leq19/40\), and every estimator \(\widehat t_m^\sharp\), uniform risk
at most $\delta$, implies
\begin{equation}
 m-1\geq \frac{5c_{\rm inf}}{8}
 \sup_{P\in\mathcal P_0\cup\mathcal P_1}
 \frac{t_{\mathrm{mix},P}\,
       \mathcal J_{\infty,\rm env}(P)}
      {\epsilon^2\log\mathcal J_{\infty,\rm env}(P)}.
 \label{eq:rr-support-component-rate}
\end{equation}
\end{theorem}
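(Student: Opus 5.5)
The plan is a multiple-hypothesis (Fano-type) reduction over the varying-coefficient subclasses $\mathcal P_0\cup\mathcal P_1$ of Subsection~\ref{subsec:varying-rare-refresh}. I expect each member of $\mathcal P_j$ to be indexed by a sign pattern (or a moment-matched coefficient vector) over the $M=D^2/32$ blocks of left-to-right source--destination pairs. The key property I will use is that $\beta$ at the relevant lag depends only on an aggregate of these coefficients. Members of $\mathcal P_0$ would then have $\beta(s)\le\xi_0(1-\epsilon)$, and members of $\mathcal P_1$ would have $\beta(s)\ge\xi_0(1+\epsilon)$, at a common lag $s\asymp R$. Because of \eqref{eq:rr-theorem-target-scale}, the intervals $I_P(\xi_0,\epsilon)$ for $P\in\mathcal P_0$ and for $P\in\mathcal P_1$ are disjoint. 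Any estimator with uniform risk at most $\delta\le19/40$ therefore yields a test between the mixtures $\bar{\Prob}_0=\mathrm{avg}_{P\in\mathcal P_0}\Prob_P^{(m)}$ and $\bar{\Prob}_1$ with total error at most $2\delta\le19/20$. This forces $\|\bar{\Prob}_0-\bar{\Prob}_1\|_{\TV}\ge1/20$.

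The main step is to upper bound this distance between mixtures by a quantity of order $\sqrt{m\epsilon^2\log M/(R M)}$. Two routes are available, and I would use the second.
\begin{itemize}[label={}]
\item Le Cam with a $\chi^2$ mixture (Ingster) computation.
\item Moment matching in the style of polynomial-approximation lower bounds.
\end{itemize}
In the moment-matching route, the coefficients in each block are drawn i.i.d.\ from two priors $\mu_0,\mu_1$ whose first $\ell=2\lceil2\log(40M)\rceil$ moments agree, while their means differ by order $\epsilon$ times the mixing contrast. Since $M\ge C_{\rm size}\ell^2$ and $\epsilon\le 20c_{\rm acc}/\ell$, such priors exist with small support, and the perturbation stays within the admissible parameter range.

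The argument then conditions on the refresh skeleton. Over $m$ steps the chain makes roughly $N\approx m/R$ refreshes. Each left-to-right refresh reveals one pair, which lands in one of the $M$ blocks. By Poissonisation, the counts per block are approximately independent $\mathrm{Poi}(N/M)$. Given the counts, the observations in a block are samples whose law depends on that block's coefficient only through a polynomial of degree equal to the count. With matched moments up to order $\ell$, blocks whose count is at most $\ell$ contribute zero divergence. Blocks with larger counts are rare when $N/M\lesssim\ell/\log$, and the standard Wu--Yang bound then gives $\TV\le M(eN\epsilon^2/(M\ell))^{\ell}+$ small, which is below $1/20$ unless $N\gtrsim c_{\rm inf} M\ell/\epsilon^2$. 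Identifying $M\asymp\mathcal J_{\infty,\rm env}$ through \eqref{eq:rr-theorem-J-scale}, $\ell\asymp\log\mathcal J_{\infty,\rm env}$, and $R\asymp t_{\mathrm{mix},P}$ gives $m-1\gtrsim t_{\mathrm{mix},P}\mathcal J_{\infty,\rm env}/(\epsilon^2\log\mathcal J_{\infty,\rm env})$. The $\log$ appears in the denominator because the admissible $\epsilon$ range scales like $1/\ell$; the constant $5/8$ would come from converting $N$ back to $m-1$ using $10R$ versus the refresh rate. I still need to check that this tracking of $\ell$ lands in the denominator and not the numerator.

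I expect the main obstacle to be the Markov dependence. Holding periods, right-to-left uniform refreshes, and the fact that destinations of one refresh are sources of the next could couple the blocks. I would first try to design the parametrisation so that the coefficient only tilts the destination given the source within a block. Then, given the skeleton, the path likelihood factorises across blocks, with the right-to-left moves carrying no information. The remaining dependence in the block counts would be handled by a concentration bound on visit counts of the uniformly ergodic skeleton chain, replacing exact Poissonisation. Finally, I would take the supremum over $P$ using that $t_{\mathrm{mix},P}$ and $\mathcal J_{\infty,\rm env}(P)$ are uniformly comparable across the class.
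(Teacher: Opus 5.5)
Your architecture (Le Cam between prior mixtures, i.i.d.\ moment-matched block priors, per-block parity counts as sufficient statistics, Poissonisation) matches the paper's. However, the step that creates separation is missing, and as written it is self-contradictory. If the first $\ell$ moments of $\mu_0,\mu_1$ agree, so do their means. More generally, any part of $\beta$ that is polynomial of degree at most $\ell$ in each coefficient has the same expectation under both product priors.

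The separation has to come from the non-smooth block dependence in \eqref{eq:rr-average-profile}, namely $g_t(v)=(w_t|v|-p_t/2)_+$. This vanishes near $v=0$ and is affine away from its kinks at $\pm\tau$, where $\tau=p/(2w)$. The paper therefore centres the hidden coefficients at the kink: $U=\tau+hZ$ with $Z\sim\nu_i$ symmetric and $h\leq\tau/2$ (this is what the restriction $\epsilon\leq20c_{\rm acc}/\ell$ buys). Then $g_{s_{0,2}}(U)=whZ_+$ exactly, and the hinge means differ by $wh\gamma_\ell=32\eta$, where $\gamma_\ell\geq a_\star/\ell$ is the best degree-$\ell$ uniform approximation error of $|x|$ on $[-1,1]$.

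The paper also needs $M$ control blocks with a calibrated common coefficient $c$ to place the two prior means of $\beta(s_{0,2})$ at $\xi_0\mp2\eta$. It further needs Hoeffding with $M\geq C_{\rm size}\ell^2$ to get $\mu_i^{\otimes M}(\mathcal U_i)\geq79/80$. The product priors are not supported on $\mathcal P_i$, so your average over $\mathcal P_i$ must be the conditioned prior, at a total-variation cost of $1/80$ per side. Disjointness of the output sets comes from this common-lag separation (strict on the $\mathcal P_1$ side) plus monotonicity of $\beta$, not from \eqref{eq:rr-theorem-target-scale}.

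The quantitative step then fails exactly where you suspected. Because the hinge gap per unit amplitude is only of order $1/\ell$, a $\beta$-gap of order $\eta=\epsilon/20$ forces amplitude $h=32\eta/(w\gamma_\ell)\asymp\epsilon\ell$, not $\epsilon$. Set $\lambda=(m-1)q/M$ and $A=\lambda/(1-\tau^2)$. The exact identity $\E_{F_\tau}\{\Lambda_{\tau+v}\Lambda_{\tau+z}\}=\exp\{Avz\}$ together with moment matching leaves $\|\overline\Lambda_1-\overline\Lambda_0\|^2_{L^2(F_\tau)}\leq4\sum_{j>\ell}(Ah^2)^j/j!$. This is $O(32^{-\ell})$ only while $Ah^2\lesssim\ell$, i.e.\ $\lambda\eta^2\ell^2\lesssim\ell$, i.e.\ $m-1\lesssim MR/(\eta^2\ell)$. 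That is how $\log\mathcal J_{\infty,\rm env}$ ends up in the denominator.

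Your bound $M(eN\epsilon^2/(M\ell))^\ell$ is the one for amplitude $\epsilon$, which only produces an $\epsilon/\ell$ separation in $\beta$. So the rate $N\gtrsim M\ell/\epsilon^2$ it suggests is not delivered by any such priors.

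Likewise, ``blocks with more than $\ell$ visits are rare'' is false in the relevant regime. Near $m-1\approx c_{\rm inf}M/(q\eta^2\ell)$ one has $\lambda\approx c_{\rm inf}/(\eta^2\ell)$. Under \Cref{asm:2} this is a large multiple of $\ell$ and grows as $\epsilon\to0$, so most blocks have many more than $\ell$ visits. It is the smallness of $Ah^2$ relative to $\ell$, not rarity, that controls them.

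Your Markov-dependence concern has a simpler resolution than visit-count concentration. Every left-to-right source is uniform, by stationarity or by the uniform right-to-left refresh. Hence the left-to-right pairs are i.i.d.\ with parameter-free block labels. After adding synthetic marks at right-to-left refreshes and Poissonising the number of transition opportunities, the per-block parity counts are exactly independent Poisson.

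Finally, the constant $5/8=400/(4\cdot32\cdot5)$ comes from $\eta=\epsilon/20$, $t_{\mathrm{mix},P}\leq4R$, $\mathcal J_{\infty,\rm env}\leq32M$ and $\ell<5\log\mathcal J_{\infty,\rm env}$. It does not come from a refresh-rate conversion.
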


\Cref{thm:rr-support-component} isolates an effective-support bottleneck in average mixing time estimation. The two checkerboard subclasses combine to form the converse below.

\begin{corollary}[Combined converse]
\label{cor:rr-matched-maximum}
Under \Cref{asm:2}, for every admissible $\epsilon$, every
\(m\geq1\), all \(0<\delta<1/4\), and every estimator
\(\widehat t_m^\sharp\), uniform risk at most $\delta$, implies
\begin{equation}
\label{eq:rr-theorem-intrinsic-rate}
 \begin{aligned}
 m-1
 &\geq
 \sup_{P\in\mathcal C_{R,D,\epsilon}}
 \frac{t_{\mathrm{mix},P}}
      {\epsilon^2}\\
 &\quad{}\times
 \max\left\{
  \frac{5c_{\rm inf}}{8}\frac{\mathcal J_{\infty,\rm env}(P)}
       {\log\mathcal J_{\infty,\rm env}(P)},
  \frac{\log\frac1{4\delta}}{1320}
 \right\}.
 \end{aligned}
\end{equation}
\end{corollary}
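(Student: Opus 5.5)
The corollary follows by combining \Cref{thm:rr-confidence-component} and \Cref{thm:rr-support-component} over the hard class $\mathcal C_{R,D,\epsilon}=\{P_0,P_1\}\cup\mathcal P_0\cup\mathcal P_1$. Fix $\epsilon$ admissible under \Cref{asm:2}, $m\geq1$, $0<\delta<1/4$, and an estimator $\widehat t_m^\sharp$ whose uniform risk over $\mathcal C_{R,D,\epsilon}$ is at most $\delta$. The first observation is monotonicity of the hypothesis in the class. Uniform risk at most $\delta$ over $\mathcal C_{R,D,\epsilon}$ implies uniform risk at most $\delta$ over the subclass $\{P_0,P_1\}$ and over $\mathcal P_0\cup\mathcal P_1$. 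Also, \Cref{asm:2} contains \Cref{asm:1}, and $\delta<1/4\leq 19/40$ while $\epsilon\leq1/4$. So both theorems apply to the same estimator.

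From \Cref{thm:rr-support-component} we get
\[
m-1\geq \frac{5c_{\rm inf}}{8}\sup_{P\in\mathcal P_0\cup\mathcal P_1}\frac{t_{\mathrm{mix},P}\,\mathcal J_{\infty,\rm env}(P)}{\epsilon^2\log\mathcal J_{\infty,\rm env}(P)}.
\]
From \Cref{thm:rr-confidence-component} we get, for each $i\in\{0,1\}$,
\[
m-1\geq \frac{t_{\mathrm{mix},P_i}}{1320\,\epsilon^2}\log\frac1{4\delta}.
\]
The target is a supremum over all of $\mathcal C_{R,D,\epsilon}$ of $t_{\mathrm{mix},P}\epsilon^{-2}$ times a maximum of two terms. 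Since $\max\{A,B\}\leq$ a bound exactly when both $A$ and $B$ are, it suffices to show, for every $P\in\mathcal C_{R,D,\epsilon}$, both
\[
m-1\geq \frac{t_{\mathrm{mix},P}}{\epsilon^2}\cdot\frac{5c_{\rm inf}}{8}\frac{\mathcal J_{\infty,\rm env}(P)}{\log\mathcal J_{\infty,\rm env}(P)}
\qquad\text{and}\qquad
m-1\geq \frac{t_{\mathrm{mix},P}}{\epsilon^2}\cdot\frac{\log\frac1{4\delta}}{1320}.
\]
Each theorem gives one of these inequalities only on its own subclass. The obstacle is therefore transferring each bound to the other subclass. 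For $P\in\mathcal P_0\cup\mathcal P_1$ we need the confidence term, and for $P\in\{P_0,P_1\}$ we need the effective-support term.

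I would close this gap by comparing the relevant quantities across the class. Every member is a rare-refresh checkerboard chain on $D$ states with refresh probability $1/R$.

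The effective-support term is handled by \eqref{eq:rr-theorem-J-scale}. This gives $\mathcal J_{\infty,\rm env}(P)\in[D^2(1-\xi_0(1-\epsilon))^2,D^2]$ uniformly on the class. Because $x\mapsto x/\log x$ is increasing for $x\geq e$, the ratio $\mathcal J/\log\mathcal J$ is comparable up to the factor $(1-\xi_0)^2\geq0.9$ across members.

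For the confidence term, I would show $t_{\mathrm{mix},P}$ is identical, or comparable, across the class. The hold/refresh structure suggests that $d_{\max}$ depends only on $R$ and the region geometry, not on the coefficients $c$. I would verify this from the explicit constructions in Subsections~\ref{subsec:rr-confidence} and~\ref{subsec:varying-rare-refresh}. The check is that the slowest initial state's distance to $\pi$ is governed by the probability $(1-1/R)^t$ of never refreshing, which is coefficient-free.

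If the worst-case mixing times agree exactly and the constants absorb the $0.9$ factor, the transfer is immediate. Otherwise the corollary's constants would need slight adjustment, or the appendix would have to supply the exact equalities. I expect this cross-subclass comparison of $t_{\mathrm{mix},P}$ and $\mathcal J_{\infty,\rm env}(P)$ to be the only nontrivial step. The rest is taking the maximum of the two lower bounds and then the supremum over $P$.
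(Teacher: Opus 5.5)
There is a genuine gap in your transfer step. You work only from the statements of \Cref{thm:rr-confidence-component} and \Cref{thm:rr-support-component}. That forces you to compare $t_{\mathrm{mix},P}$ and $\mathcal J_{\infty,\rm env}(P)$ across members of $\mathcal C_{R,D,\epsilon}$ in both directions, and the stated constants do not survive that comparison.

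First, your expectation that $d_{\rm max}$ is coefficient-free is false. In the row calculation behind \Cref{lem:common-average-mixing-coefficient}, the exactly-one-refresh term $w_tC_\theta$ produces deviations $(\pm w_t\theta_e-p_t/2)/d$ in a left-starting row. That row's distance is therefore $b_t+d^{-1}\sum_e g_t(\theta_e)$, summed over the $d/2$ blocks meeting the row, so the worst row depends on $\theta$ and not only on the no-refresh probability $(1-1/R)^t$. Consequently $t_{\mathrm{mix},P}$ need not coincide across the class. The paper only localises it in $(R/2,4R]$, which is a factor-$8$ window.

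Second, the factor $(1-\xi_0(1-\epsilon))^2\approx0.9$ in \eqref{eq:rr-theorem-J-scale} cannot be absorbed. The corollary carries exactly the constant $5c_{\rm inf}/8$ of \Cref{thm:rr-support-component}, so there is no slack. Your route therefore gives the corollary only with degraded constants, as you half-anticipate.

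The missing idea is that no cross-member comparison is needed. Go back inside the component proofs to their $P$-free, construction-scale conclusions:
\begin{itemize}
\item $m-1\geq R\log\{1/(4\delta)\}/(132000\eta^2)$, from \eqref{eq:rr-confidence-delta-bound};
\item $m-1>c_{\rm inf}RM/(\eta^2\ell)$, from the Le Cam step.
\end{itemize}
Both apply because uniform risk on $\mathcal C_{R,D,\epsilon}$ restricts to each subclass, and $\delta<1/4<19/40$. Then use only class-wide upper bounds:
\begin{itemize}
\item $t_{\mathrm{mix},P}\leq4R$ for every $P_\theta$, by \Cref{lem:checkerboard-mixing-scales};
\item $\mathcal J_{\infty,\rm env}(P)\leq D^2=32M$;
\item $\ell<5\log(D^2)$;
\item monotonicity of $x/\log x$ for $x>e$, where the lower bound in \eqref{eq:rr-theorem-J-scale} is used only to ensure $\mathcal J_{\infty,\rm env}(P)>e$.
\end{itemize}
With $\eta=\epsilon/20$ this gives, for every $P\in\mathcal C_{R,D,\epsilon}$,
\[
\frac{R\log\frac1{4\delta}}{132000\,\eta^2}\geq\frac{t_{\mathrm{mix},P}}{1320\,\epsilon^2}\log\frac1{4\delta},
\qquad
\frac{c_{\rm inf}RM}{\eta^2\ell}\geq\frac{400}{4\cdot32\cdot5}\,c_{\rm inf}\,\frac{t_{\mathrm{mix},P}\,\mathcal J_{\infty,\rm env}(P)}{\epsilon^2\log\mathcal J_{\infty,\rm env}(P)}.
\]
Since $400/(4\cdot32\cdot5)=5/8$, these are exactly the constants $1/1320$ and $5c_{\rm inf}/8$ of the corollary. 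Taking the pointwise maximum and then the supremum over $P$ finishes the proof. You cannot recover this from the theorem statements alone because those statements already traded the $P$-free quantity $4R$ for the smaller $t_{\mathrm{mix},P}$.
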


\begin{remark}
\Cref{cor:rr-matched-maximum} lower-bounds the trajectory length required to estimate the average mixing time within the stated threshold band; such an estimate can inform effective-sample-size calculations for MCMC.
\end{remark}

\begin{remark}
  \label{rem:confidence-log-refinement}
  For $\delta\leq1/16$, Lemma~\ref{lem:confidence-log-refinement} permits replacing $\log(1/(4\delta))$ with $\frac13\log(4/\delta)$ in the delayed-register, common-coefficient and
  combined bounds.
  %\Cref{thm:register-lower} and~\Cref{thm:rr-confidence-component} and \Cref{cor:rr-matched-maximum}
\end{remark}

\section{Overview of techniques}\label{sec:overview-of-techniques}

The delayed register construction turns a small change in one binary transition parameter into a separation between $\beta$-mixing coefficients, after a $k$-transition delay, whereas the rare-refresh checkerboard spreads the dependence signal across many possible left-to-right source--destination pairs. Although the target is $\tmix^\sharp$, the rates are stated using $\tmix$, to better facilitate comparison with the upper bound \eqref{eqn:achievability-bound-wolfer}. Both quantities are comparable on our classes by \Cref{lem:delayed-register-mixing-scales,lem:checkerboard-mixing-scales}.

\subsection{Proof of \Cref{thm:register-lower}}
\label{subsec:register-proof}

The construction separates two roles. The register retains part of the initial state before $k$ transitions, while a weak binary dependence determines the $\beta$-mixing coefficient at the first complete flush, i.e. once every initial coordinate has been replaced.

\begin{lemma}
\label{lem:delayed-register-kernel}
For every $k\in\mathbb N$ and $a\in[0,1)$, $P^{\mathrm{dr}}_{k,a}$ is a uniformly
ergodic Markov kernel on $\mathcal X_k$ with stationary distribution $U_k$.
\end{lemma}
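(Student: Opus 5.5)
We will verify the two claims of Lemma~\ref{lem:delayed-register-kernel} directly from the representation $Z=x_1E$ in \eqref{eq:register-kernel}. First, we check that $P^{\mathrm{dr}}_{k,a}$ is a Markov kernel. From state $x$ the chain moves only to the two states $(x_2,\ldots,x_k,\pm1)$. These receive probabilities $(1+ax_1z)/2$ for $z=\pm1$. Both are nonnegative because $|a|<1$, and they sum to one.

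Next, we show that $U_k$ is stationary. Fix a target $y=(y_1,\ldots,y_k)$. Its only predecessors are the two states $x=(s,y_1,\ldots,y_{k-1})$ with $s\in\{-1,+1\}$. Hence
\begin{equation*}
\sum_x U_k(x)P^{\mathrm{dr}}_{k,a}(x,y)=2^{-k}\sum_{s=\pm1}\frac{1+a s y_k}{2}=2^{-k}=U_k(y),
\end{equation*}
because the terms $\pm a y_k$ cancel. Equivalently, under $U_k$ the first coordinate $x_1$ is a uniform sign independent of $E$, so $Z=x_1E$ is a uniform sign independent of $(x_2,\ldots,x_k)$.

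The main step is uniform ergodicity. On a finite state space it suffices to show that the chain is irreducible and aperiodic, and we will in fact obtain a Doeblin minorisation. The key observation is that after exactly $k$ steps the register has been fully flushed. The state then equals $(Z_1,\ldots,Z_k)$ with $Z_j=x^{(j)}_1E_j$, where each appended bit has conditional probability $(1\pm a)/2\geq(1-a)/2>0$ given the past. Therefore
\begin{equation*}
P^k(x,y)\geq\left(\frac{1-a}{2}\right)^k \quad\text{for all } x,y\in\mathcal X_k.
\end{equation*}
This gives $P^k\geq k'U_k$ with $k'=(1-a)^k>0$. The standard Doeblin argument then yields
\begin{equation*}
d_{\rm max}(nk)\leq(1-(1-a)^k)^n\to0
\end{equation*}
geometrically, which is uniform ergodicity; it also shows $U_k$ is the unique stationary law. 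The only potential subtlety is the boundary case $a\to1$, which is excluded by the assumption $a\in[0,1)$, so we expect no real obstacle.
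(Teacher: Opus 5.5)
Your proof is correct. The Markov-kernel and stationarity steps coincide with the paper's, which phrases stationarity as double stochasticity via the two predecessors $(\sigma,y_1,\ldots,y_{k-1})$.

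The routes differ only in the uniform-ergodicity step. The paper uses the $k$-step bound $[(1-a)/2]^k$ only to get irreducibility. It then checks aperiodicity separately through the self-loop $P^{\mathrm{dr}}_{k,a}(\mathbf 1,\mathbf 1)=(1+a)/2>0$, and invokes the finite-state fact that irreducible aperiodic chains are uniformly ergodic. You instead observe that the same bound holds uniformly in $(x,y)$, and read it as a Doeblin minorisation $P^k\geq(1-a)^kU_k$ over the whole space. This buys three things:
\begin{itemize}
\item the aperiodicity check becomes unnecessary, since a strictly positive $P^k$ is already primitive;
\item uniqueness of $U_k$ comes for free;
\item you get the explicit geometric rate $d_{\rm max}(nk)\leq\{1-(1-a)^k\}^n$, which extends to all $t$ because $d_{\rm max}$ is nonincreasing.
\end{itemize}
The paper's route is purely qualitative but rests only on the standard convergence theorem.

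A small sharpening: the first coordinate at time $j-1$ is just $x_j$, so $Z_j=x_jE_j$. Hence $P^k(x,y)=\prod_{j=1}^k(1+ax_jy_j)/2$ exactly, which is the $\mathcal B=1$ case of the flush computation in Lemma~\ref{lem:dr-pre-flush-block-formulas}.
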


The parameter \(a\) is selected so that $\beta$-mixing coefficient's height lies on one side or the other of the prescribed threshold band.

\begin{lemma}
    \label{lem:dr-pre-flush-block-formulas}
    For \(0\leq t<k\),
    \begin{equation}
    \beta_a(t)\geq1-2^{-(k-t)}\geq\frac12.
    \label{eq:register-preflush}
    \end{equation}
    For every $\mathcal{B}\in\mathbb{N}$,
    \begin{equation}
    \beta_a(\mathcal{B} k)=B_k(a^\mathcal{B}).
 \label{eq:register-block-identity}
\end{equation}
\end{lemma}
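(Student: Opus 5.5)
The plan is to compute $P^t(x,\cdot)$ explicitly from the representation $Z=x_1E$ in \eqref{eq:register-kernel}. Write the chain as a shift register. After $t$ steps started from $x=(x_1,\ldots,x_k)$, the state is $(X_{t+1},\ldots,X_{t+k})$. Here $X_j=x_j$ for $j\leq k$, and $X_{j}=X_{j-k}E_j$ for $j>k$, with $E_j$ i.i.d.\ Rademacher of mean $a$. Iterating gives $X_{j}=x_{r}\prod_{i=1}^{\mathcal{B}}E_{r+ik}$ when $j=r+\mathcal{B}k$ with $1\leq r\leq k$. Thus each register position carries a product of independent Rademacher factors, and distinct positions use disjoint sets of $E$'s.

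\textbf{Pre-flush bound.} For $0\leq t<k$, the coordinates $x_{t+1},\ldots,x_k$ survive unchanged in the first $k-t$ positions, so $P^t(x,\cdot)$ is supported on a set of at most $2^{t}$ states. For any law $\mu$ supported on a set $S$, we have $\|\mu-U_k\|_{\TV}\geq 1-U_k(S)=1-2^{t-k}$. Hence $d_x(t)\geq 1-2^{-(k-t)}$ for every $x$. Averaging against $\pi=U_k$ (by \Cref{lem:delayed-register-kernel}) gives \eqref{eq:register-preflush}. Since $k-t\geq1$, the right-hand side is at least $1/2$.

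\textbf{Block identity.} At $t=\mathcal{B}k$ the state is $(x_1F_1,\ldots,x_kF_k)$ with $F_r=\prod_{i=1}^{\mathcal{B}}E_{r+ik}$. The $F_r$ are independent, and each is Rademacher with mean $a^{\mathcal{B}}$, since $\E\prod E=a^{\mathcal{B}}$. So $P^{\mathcal{B}k}(x,y)=\prod_r\frac{1+a^{\mathcal{B}}x_ry_r}{2}$. Write $b=a^{\mathcal{B}}$ and let $j$ be the number of agreements between $x$ and $y$. Then $P^{\mathcal{B}k}(x,y)=2^{-k}(1+b)^j(1-b)^{k-j}$. Summing over $y$ (there are $\binom{k}{j}$ such $y$ for each $j$) gives
\begin{equation*}
d_x(\mathcal{B}k)=\tfrac12\sum_{j=0}^k\binom kj 2^{-k}\left|(1+b)^j(1-b)^{k-j}-1\right|=B_k(b).
\end{equation*}
This value does not depend on $x$, so averaging under $U_k$ gives $\beta_a(\mathcal{B}k)=B_k(a^{\mathcal{B}})$.

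The only delicate step is the bookkeeping in the product representation: one must check that the factors feeding distinct positions are disjoint, which is what gives the independence of the $F_r$. This is immediate from the indexing $j=r+\mathcal{B}k$.
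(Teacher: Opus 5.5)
Your proof is correct and follows the paper's argument. Before the first flush you use the same support-counting bound, and after $\mathcal B$ blocks you use the same product representation $x_r\prod_{i}E_{r+ik}$ with independent Rademacher multipliers of mean $a^{\mathcal B}$. The only cosmetic difference is how the fixed signs $x_r$ are removed: you count agreements between $x$ and $y$ in the explicit formula for $P^{\mathcal B k}(x,y)$, whereas the paper uses the $U_k$-preserving bijection $w\mapsto x\odot w$ and the invariance of total variation under it. Both routes give an $x$-independent distance equal to $B_k(a^{\mathcal B})$.
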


\Cref{prop:finite-binomial-inverse} below is the quantitative bridge between $\beta$-mixing coefficient separation and statistical indistinguishability.
%The finite binomial inverse-modulus bound with \(C=3\) and \(A=\sqrt2(C/4+1)=7\sqrt2/4\) gives \(c_A=\sqrt{2/\pi}e^{-17}\). Henceforth, take $L:=\frac2{c_A}=\sqrt{2\pi}e^{17}$.
\begin{proposition}
    \label{prop:finite-binomial-inverse}
    Define the constant $L:=\sqrt{2\pi}e^{17}$. Then, there exists \(K_0\in\mathbb N\) such that, for \(k\geq K_0\) and \(0\leq b_0<b_1\leq3/8\), \begin{align}
    B_k^{-1}(b_1)-B_k^{-1}(b_0)
    &\leq L\frac{b_1-b_0}{\sqrt{k}},
    \label{eq:Bk-inverse-lipschitz}\\
    B_k^{-1}(b_1)&\leq L\frac{b_1}{\sqrt{k}}.
    \label{eq:Bk-inverse-size}
    \end{align}
Moreover $B_k(B_k^{-1}(b))=b$ for every $0\leq b\leq3/8$.
\end{proposition}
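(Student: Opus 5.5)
The plan is to read $B_k(a)$ as a total-variation distance and then lower-bound its derivative on the relevant range. Write $p_a(j):=\binom kj\big(\tfrac{1+a}2\big)^j\big(\tfrac{1-a}2\big)^{k-j}$ for the $\mathrm{Bin}(k,\tfrac{1+a}2)$ mass function, so that $2^{-k}(1+a)^j(1-a)^{k-j}\binom kj=p_a(j)$. Then \eqref{eq:Bk-definition} reads
\[
B_k(a)=\tfrac12\sum_j|p_a(j)-p_0(j)|=\|\mathrm{Bin}(k,\tfrac{1+a}2)-\mathrm{Bin}(k,\tfrac12)\|_{\TV}.
\]
The likelihood ratio $(1+a)^j(1-a)^{k-j}$ is increasing in $j$, so the optimal set is an upper tail $\{j\geq j^*(a)\}$. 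This gives $B_k(a)=\Prob_a(S\geq j^*)-\Prob_0(S\geq j^*)$, which is a maximum over $j$ of $\Prob_a(S\geq j)-\Prob_0(S\geq j)$.

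\textbf{Step 1: monotonicity, continuity and the identity $B_k(B_k^{-1}(b))=b$.} Each function $a\mapsto\Prob_a(S\geq j)$ is a polynomial and is nondecreasing, by stochastic monotonicity of the binomial in its success parameter. A maximum of finitely many continuous functions is continuous, so $B_k$ is continuous on $[0,1)$. Moreover $B_k(0)=0$ and $B_k(a)\to1-2^{-k}>3/8$ as $a\to1$ for $k\geq2$. Strict increase will follow from the derivative bound in Step 3, at least on the range where $B_k\leq 3/8$. The intermediate value theorem, together with the definition of the infimum in \eqref{eq:Bk-inverse}, then gives $B_k(B_k^{-1}(b))=b$ for every $b\in[0,3/8]$.

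\textbf{Step 2: localisation, namely $B_k(a)\leq3/8$ forces $a\leq A/\sqrt k$.} I will use the test set $\{S\geq k/2+ka/4\}$ with Chebyshev's inequality. Both laws have variance at most $k/4$, and their means $k/2$ and $k(1+a)/2$ are each at distance $ka/4$ from the threshold. This gives $B_k(a)\geq1-8/(ka^2)$, which exceeds $3/8$ once $a\geq A/\sqrt k$ for an absolute constant $A$; any fixed $A$ suffices. On this range $j^*(a)$ solves $j\log\frac{1+a}{1-a}=-k\log(1-a)$ up to rounding. Taylor expansion then gives $j^*=k/2+O(ka)+O(1)=k/2+O(\sqrt k)$, with explicit constants depending on $A$.

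\textbf{Step 3: the derivative lower bound, which I expect to be the main obstacle.} $B_k$ is the upper envelope of finitely many polynomials, so it is locally Lipschitz and differentiable off a finite set. Where it is differentiable, its derivative equals the derivative of the active branch, by the envelope argument. Using $\frac{d}{dp}\Prob_p(S\geq j)=k\binom{k-1}{j-1}p^{j-1}(1-p)^{k-j}$ and $dp/da=1/2$, I get
\[
B_k'(a)=\tfrac k2\,\Prob_{\frac{1+a}2}\big(\mathrm{Bin}(k-1)=j^*-1\big).
\]
The $\Prob_0$ branch contributes nothing, since $\Prob_0$ does not depend on $a$. By Step 2 the point $j^*-1$ sits within $x\sqrt k$ of the mean of $\mathrm{Bin}(k-1,\tfrac{1+a}2)$, with $x=O(A)$. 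A Stirling lower bound for the central binomial point mass then gives
\[
\Prob\geq\sqrt{\tfrac{2}{\pi k}}\,e^{-cx^2},
\]
hence $B_k'(a)\geq c_A\sqrt k$ with $c_A=\sqrt{2/\pi}\,e^{-17}$ for $k\geq K_0$. Here $K_0$ absorbs the Stirling error terms and the condition that $\tfrac{1+a}2$ stays bounded away from $0$ and $1$. The delicate part is tracking constants so that the exponent comes out as $17$. I would choose $A=7\sqrt2/4$ (Chebyshev with $C=3$) and bound $x^2$ crudely.

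\textbf{Step 4: conclusion.} For $0\leq b_0<b_1\leq3/8$ set $a_i=B_k^{-1}(b_i)$. Steps 1 and 2 give $a_i\leq A/\sqrt k$ and $B_k(a_i)=b_i$. Integrating the derivative bound of Step 3 over $[a_0,a_1]$, which is legitimate because $B_k$ is absolutely continuous, gives
\[
b_1-b_0\geq c_A\sqrt k\,(a_1-a_0).
\]
This yields \eqref{eq:Bk-inverse-lipschitz} with constant $1/c_A\leq L$. Taking $b_0=0$, for which $a_0=0$, gives \eqref{eq:Bk-inverse-size}.
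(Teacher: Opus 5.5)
Your architecture is the paper's. You write $B_k$ as a binomial upper-tail difference, use $B_k'(a)=\frac k2\Prob\{\operatorname{Bin}(k-1,\frac{1+a}2)=j^*-1\}$ almost everywhere, localise $B_k^{-1}([0,3/8])$ to $[0,C/\sqrt k]$, apply a Stirling lower bound to that atom, and integrate. Your envelope argument and stochastic-dominance monotonicity are fine substitutes for the paper's explicit threshold $j_k(a)$ built from $h$.

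The problem is the constant. The statement fixes $L=\sqrt{2\pi}e^{17}$, and your Step 2 cannot deliver it as written. Your test set $\{S\ge k/2+ka/4\}$, with Chebyshev applied under both laws, gives $B_k(a)\ge1-8/(ka^2)$. This exceeds $3/8$ only when $ka^2>64/5$. So ``any fixed $A$ suffices'' is false, and ``Chebyshev with $C=3$'' fails outright, since $1-8/9=1/9<3/8$.

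You are also merging two different constants. In the paper, $C=3$ is the localisation radius. By contrast, $A=\sqrt2(C/4+1)=7\sqrt2/4$ is the displacement constant in $|j-np|\le A\sqrt{k-1}$, obtained from $a/2\le h(a)<a$, the rounding $0<u\le2$, and $\sqrt k\le\sqrt2\sqrt{k-1}$. The exponent is then exactly $\frac23+\frac83A^2=17$, with Robbins error $2/3$ and Taylor constant $16/3$. Your Chebyshev localisation forces $C>\sqrt{64/5}$. The same bookkeeping then gives an exponent above $19$ ($22$ for $C=4$), i.e.\ a weaker $L$.

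The repair is easy. Put the threshold at $k/2$ in your count units; this is the paper's event $\{S>0\}$ for the $\pm1$ sum. Use symmetry to bound its probability under $\operatorname{Bin}(k,\frac12)$ by $\frac12$, instead of a concentration bound under the null. Then control the alternative one-sidedly. Cantelli gives $\Prob\{\operatorname{Bin}(k,\frac{1+a}2)\le k/2\}\le\frac{1-a^2}{1+(k-1)a^2}<\frac1{10}$ at $a=3/\sqrt k$. Even Chebyshev gives $1/9$ there, hence $B_k(3/\sqrt k)>7/18>3/8$. This restores $C=3$ and the paper's $A$.

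Alternatively, use the fact that $K_0$ is free. As $k\to\infty$, the Robbins error vanishes, $\sqrt{k/(k-1)}\to1$, and the rounding term is $o(\sqrt k)$. With the same Taylor constant, the exponent then tends to $\frac83(C/4)^2=C^2/6$, which is $8/3$ for $C=4$. But this has to be carried out, not asserted.

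Finally, there is a factor-of-two slip. You have $\frac k2\cdot c_A/\sqrt k=\frac{c_A}2\sqrt k$, not $c_A\sqrt k$. So the Lipschitz constant is $2/c_A=\sqrt{2\pi}e^{17}=L$ exactly, rather than $1/c_A$. The conclusion survives, but with no slack.
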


Proof sketch: differentiate the binomial total variation, lower-bound the resulting boundary atom using \Cref{lem:binomial-atom}, and integrate the resulting slope bound.

For the pair in~\eqref{eq:register-pair}, the inverse identity $B_k(B_k^{-1}(b))=b$ from \Cref{prop:finite-binomial-inverse} and
\eqref{eq:register-block-identity} give
\[
 \beta_{a_0}(k)=\xi(1-\epsilon),
 \qquad
 \beta_{a_1}(k)=\xi(1+2\epsilon).
\]
Equation~\eqref{eq:register-preflush} keeps both mixing coefficients above
\(\xi(1+\epsilon)\) before \(k\).  The size bound in
\eqref{eq:Bk-inverse-size}, together with
\(B_k(u)\leq u\sqrt{k}\) for \(0\leq u\leq1/2\) (by \eqref{eq:rademacher-kl-bound}, KL tensorisation and Pinsker's inequality), gives
\(\beta_{a_1}(2k)=B_k(a_1^2)\leq\xi(1-\epsilon)\) after increasing the
universal \(K_0\). This proves that the two success sets are disjoint and
establishes the time comparisons used below.

\begin{lemma}
\label{lem:delayed-register-mixing-scales}
For the constant \(K_0\) in Theorem~\ref{thm:register-lower}, let
\(0<\xi,\epsilon\leq1/4\) and \(k\geq K_0\). For both chains in \eqref{eq:register-pair},
\[
 t_{\mathrm{mix},P_i^{\mathrm{dr}}}
 \leq t_{\mathrm{mix},P_i^{\mathrm{dr}}}^{\sharp}(\xi)
 \leq2k
 \leq2t_{\mathrm{mix},P_i^{\mathrm{dr}}},
\]
where, in particular, $t_{\mathrm{mix},P_0^{\mathrm{dr}}}
=t_{\mathrm{mix},P_0^{\mathrm{dr}}}^{\sharp}(\xi)=k$.
\end{lemma}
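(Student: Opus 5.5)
We bound the worst-case mixing time $\tmix=\tmix(1/4)$ and the average mixing time at the threshold $\xi$ by using Lemma~\ref{lem:dr-pre-flush-block-formulas}, Proposition~\ref{prop:finite-binomial-inverse}, and the elementary inequality $\beta(t)\leq d_{\max}(t)$. The chain in~\eqref{eq:register-kernel} shifts its register deterministically, so $d_x(t)$ depends on $x$ only through a sign symmetry. The map $x\mapsto(x_1y_1,\dots,x_ky_k)$ intertwines the dynamics started at $x$ and at $y$: it commutes with $P^{\mathrm{dr}}_{k,a}$ and preserves $U_k$. Hence $d_x(t)$ is constant in $x$, and $\beta_a(t)=d_{\max}(t)$ for every $t$. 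We check this first, by writing $X_{t}$ started at $x$ as a coordinatewise product of $x$-dependent signs with an $x$-free process. This gives $\tmix(\zeta)=\tmix^\sharp(\zeta)$ for every $\zeta$, so all comparisons reduce to statements about $\beta_{a_i}$.

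With this identity in hand, the chain of inequalities follows from thresholds. First, $\xi\leq 1/4$ and $\beta$ is nonincreasing, since $d_{\max}$ is. Therefore $\{t:\beta(t)\leq\xi\}\subseteq\{t:\beta(t)\leq1/4\}$, which gives $\tmix\leq\tmix^\sharp(\xi)$. Next, Lemma~\ref{lem:dr-pre-flush-block-formulas} gives $\beta_a(t)\geq1/2>1/4$ for $t<k$, so $\tmix\geq k$; this yields $2k\leq2\tmix$. For the middle inequality, the discussion after Proposition~\ref{prop:finite-binomial-inverse} already shows $\beta_{a_1}(2k)=B_k(a_1^2)\leq\xi(1-\epsilon)\leq\xi$. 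For $a_0\leq a_1$ we note that $a\mapsto\beta_a(2k)=B_k(a^2)$ is nondecreasing, because $B_k$ is nondecreasing as the infimum definition in~\eqref{eq:Bk-inverse} suggests. If monotonicity of $B_k$ is not available, we avoid it for $a_0$: directly, $\beta_{a_0}(k)=B_k(a_0)=\xi(1-\epsilon)\leq\xi$ by the inverse identity. Hence $\tmix^\sharp_{P_0}(\xi)\leq k$. For $P_1$ we use the displayed bound at $2k$, so $\tmix^\sharp(\xi)\leq 2k$ for both chains.

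For the equality statement for $P_0$, we combine $\tmix^\sharp_{P_0}(\xi)\leq k$ with $\tmix_{P_0}\geq k$ and $\tmix\leq\tmix^\sharp(\xi)$. The remaining step is $\beta_{a_0}(k)\leq1/4$, which holds because $\xi(1-\epsilon)\leq1/4$. Together these force $\tmix_{P_0}=\tmix_{P_0}^\sharp(\xi)=k$.

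The main obstacle is the transitivity claim $\beta=d_{\max}$. If it fails, we instead bound $d_{\max}$ directly. We would compute $P^{t}(x,\cdot)$ for $t\geq k$ explicitly: after a full flush the register is a product of $x_1$ with cumulative Rademacher products. Its total-variation distance to $U_k$ is then an $x$-free expression, namely the same binomial sum $B_k(a^{\lceil t/k\rceil})$-type formula that underlies~\eqref{eq:register-block-identity}. The only case needing $d_{\max}$ rather than $\beta$ is the upper bound $\tmix\leq\tmix^\sharp(\xi)$, and this explicit computation handles that case.
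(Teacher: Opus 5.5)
Your proposal is correct and follows essentially the same route as the paper. You show that $d_x(t)$ does not depend on the initial word, so $\beta_a=d_{\max}$ and the two mixing times coincide at every threshold. You then read off $t_{\mathrm{mix},P_i^{\mathrm{dr}}}\geq k$ from the pre-flush bound, $t_{\mathrm{mix},P_0^{\mathrm{dr}}}^{\sharp}(\xi)\leq k$ from $\beta_{a_0}(k)=\xi(1-\epsilon)$, $t_{\mathrm{mix},P_1^{\mathrm{dr}}}^{\sharp}(\xi)\leq 2k$ from $\beta_{a_1}(2k)\leq\xi(1-\epsilon)$, and $t_{\mathrm{mix},P_i^{\mathrm{dr}}}\leq t_{\mathrm{mix},P_i^{\mathrm{dr}}}^{\sharp}(\xi)$ from $\xi\leq1/4$, exactly as the paper does.

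One small correction: the map $x\mapsto x\odot y$ does not literally commute with $P^{\mathrm{dr}}_{k,a}$. One step from $x\odot y$ has the law of one step from $x$ multiplied coordinatewise by the cyclic shift $(y_2,\ldots,y_k,y_1)$. However, the representation you say you will check is exactly the paper's argument and suffices: the register at time $t$ equals a cyclically shifted copy of $x$ times an $x$-free sign vector.
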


Finally, each observed transition reveals
\(E=x_1Z\in\{-1,+1\}\), a Rademacher observation of mean \(a\).  Since the
stationary initial laws agree, the path divergence is exactly $\KL(\Prob_{P_0^{\mathrm{dr}}}^{(m)}\|
     \Prob_{P_1^{\mathrm{dr}}}^{(m)})
 =(m-1)d_{\rm KL}((1+a_0)/2\| (1+a_1)/2)$,
which, after applying the bounds in~\eqref{eq:rademacher-kl-bound} and
\eqref{eq:Bk-inverse-lipschitz}, yields
\begin{equation}
    \KL(\Prob_{P_0^{\mathrm{dr}}}^{(m)}\|
     \Prob_{P_1^{\mathrm{dr}}}^{(m)})
 \leq12L^2(m-1)\frac{\xi^2\epsilon^2}{k}.
 \label{eq:register-path-kl-bound}
\end{equation}
The two success sets are disjoint, so thresholding any successful estimator
gives a binary test. Bretagnolle--Huber and
\eqref{eq:register-path-kl-bound} show that uniform risk at most $\delta$, requires
\[
 m-1\geq\frac{k}{12L^2(\xi\epsilon)^2}
 \log\frac1{4\delta}.
\]
Lemma~\ref{lem:delayed-register-mixing-scales} gives
\(t_{\mathrm{mix},P_i^{\mathrm{dr}}}\leq2k\). Since
\(L^2=2\pi e^{34}\), this proves~\eqref{eq:register-rate}. The
confidence refinement follows from Lemma~\ref{lem:confidence-log-refinement}.

\subsection{Proof of \Cref{cor:rr-matched-maximum}}

Throughout this section, write $\eta:=\xi_0\epsilon$. Unlike the delayed register, the standard checkerboard makes many hidden
directions visible at a constant fraction of the transitions and is exactly
mixed after two steps. Here the hidden directions are the possible left-to-right source--destination transitions whose probabilities depend on the unknown coefficients. To obtain a growing target average-mixing-time scale,
the next construction holds the exact current state most of the time and
applies a checkerboard transition rule only at a rare refresh. The same generic family
has two statistical specialisations: A common coefficient in every block
gives a one-dimensional confidence problem, whereas varying coefficients, defined via a moment-matched prior, elicit an effective-support bottleneck.

% ORIGIN: main.tex, subsection ``Metastable regions and checkerboard payloads''
% through paragraph ``Role of the scale lemma''; deletion-first condensation.
\subsubsection{Metastable regions}
\label{subsec:rr-construction}

Fix \(D\geq64\) divisible by eight and an integer \(R\geq100\), and set $d:=D/2,~ M:=D^2/32$ and $q:=1/R$, where $2M$ is the number of checkerboard blocks. Let $\mathcal X=\mathsf X_{\rm L}\mathbin{\cup}\mathsf X_{\rm R}$ be the chain's state-space, comprising a left half, $\mathsf X_{\rm L}=\{L_1,\ldots,L_d\}$ and a right half $\mathsf X_{\rm R}=\{R_1,\ldots,R_d\}$. The sets \(\mathsf X_{\rm L}\) and \(\mathsf X_{\rm R}\) are genuine metastable regions: every refresh switches between them, while a nonrefresh transition holds the exact state.  Denote by $U_d=\frac1d\one\one^\top$ the uniform transition matrix on $d$ states.

Pair the source and destination labels. Their Cartesian products partition the left-to-right transition probabilities into \(2M=D^2/16\) disjoint \(2\)-by-\(2\) matrix-entry blocks \(e=(a,b)\). For a scalar vector \(\theta=(\theta_e)_{e=1}^{2M}\in(-1,1)^{2M}\), write \(\theta_{ab}\) for the coordinate of block \(e=(a,b)\), and define the full \(d\)-by-\(d\) conditional left-to-right kernel \(K_\theta\) through its block restrictions
\begin{equation}
 K_\theta\big|_e
 =\frac1d
 \begin{pmatrix}
  1+\theta_e&1-\theta_e\\
  1-\theta_e&1+\theta_e
 \end{pmatrix}.
 \label{eq:rr-payload-block}
\end{equation}
The alternating signs make this a balanced checkerboard block. A displayed \(2\)-by-\(2\) restriction is not itself stochastic, but the complete row of \(K_\theta\) sums to one. The same cancellation holds down every column.

The rare-refresh kernel is
\begin{equation}
 P_\theta=
 \begin{pmatrix}
  (1-q)I_d&qK_\theta\\
  qU_d&(1-q)I_d
 \end{pmatrix}.
 \label{eq:rr-kernel}
\end{equation}
The four $d\times d$ segments in Equation~\eqref{eq:rr-kernel} encode three transition types. With probability $1-q$, the chain stays put; otherwise, it switches regions via $K_\theta$ from left to right or $U_d$ from right to left. Thus, the exit time from either region is geometric with mean $1/q=R$, making the regions metastable.

\begin{lemma}
\label{lem:checkerboard-kernel}
$P_\theta$ defined by \eqref{eq:rr-kernel} is a Uniformly Ergodic Markov kernel.
\end{lemma}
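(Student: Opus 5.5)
The plan is to verify three things directly from \eqref{eq:rr-kernel}: that $P_\theta$ is stochastic, that it has an explicit stationary law, and that it satisfies a Doeblin minorisation. Together these give uniform ergodicity.

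\textbf{Stochasticity.} For every $\theta\in(-1,1)^{2M}$ the entries of $K_\theta$ are $(1\pm\theta_e)/d>0$, so $K_\theta$ is strictly positive. Each row of $K_\theta$ meets $d/2$ blocks, and in each block the row contributes $\frac1d\{(1+\theta_e)+(1-\theta_e)\}=2/d$. Hence every row sums to one. The same cancellation holds down every column, so $K_\theta$ is in fact doubly stochastic. Consequently each row of $P_\theta$ sums to $(1-q)+q=1$, and all entries are nonnegative.

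\textbf{Stationarity.} I take $\pi$ uniform on the $D$ states and check $\pi P_\theta=\pi$ block by block. The left coordinates of $\pi P_\theta$ are
\[
 \tfrac1D\bigl[(1-q)\one^\top+q\one^\top U_d\bigr]=\tfrac1D\one^\top ,
\]
using $\one^\top U_d=\one^\top$. The right coordinates are
\[
 \tfrac1D\bigl[q\one^\top K_\theta+(1-q)\one^\top\bigr]=\tfrac1D\one^\top ,
\]
using the column sums of $K_\theta$. Uniqueness of $\pi$ will follow from the minorisation below.

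\textbf{Minorisation.} I bound the two-step kernel from below. From $L_i$, one refresh followed by one hold reaches $R_j$ with probability at least $qK_\theta(i,j)(1-q)>0$. A hold followed by a refresh gives the same order. Returning to the left region uses $U_d$, whose entries are all equal to $1/d$.

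Finite-state chains whose kernel has a strictly positive power are uniformly ergodic, so I would show that $P_\theta^3$ is entrywise positive. Consider three steps, each of which is a hold or a region switch:
\begin{itemize}
 \item Left to right in an odd number of switches: use exactly one switch, placed anywhere. This gives mass at least $q(1-q)^2\min_{ij}K_\theta(i,j)$ on every right state.
 \item Left to left: hold three times for the diagonal entry. For off-diagonal entries use the route left $\to$ right $\to$ left $\to$ hold, which gives $q^2(1-q)\sum_j K_\theta(i,j)/d>0$.
\end{itemize}
The cases starting from the right region are symmetric. A cleaner alternative is the Doeblin condition: $P_\theta(x,\cdot)\ge(1-q)\delta_x$ is too weak, but for every $x$ one has $P_\theta^3(x,\cdot)\ge c\,\nu$ with $\nu$ uniform and $c=\min\{q^2(1-q)/d\cdot\min(K_\theta),(1-q)^3\}\cdot D>0$. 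Verifying this minorisation is the only mildly fiddly step, and it is routine.

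\textbf{Conclusion.} Given the minorisation, the standard Doeblin argument yields
\[
 d_{\max}(t)\le(1-c)^{\lfloor t/3\rfloor},
\]
which is uniform ergodicity. The same argument also gives irreducibility and aperiodicity (through the self-loops), and hence uniqueness of $\pi$.
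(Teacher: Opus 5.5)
Your proof is correct and follows essentially the same route as the paper: positivity and row/column cancellation in $K_\theta$ give double stochasticity and hence a uniform stationary law, and reachability by one or two refreshes together with the positive self-loops makes the finite chain irreducible and aperiodic, hence uniformly ergodic. Your explicit Doeblin minorisation is a harmless quantitative addition; note that $P_\theta^2$ is already entrywise positive, since one refresh plus one hold reaches every state of the opposite region and two refreshes reach every state of the original region.
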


\subsubsection{General $\beta$-mixing coefficient}
\label{subsec:rr-profile}

%Then \(r_t\) is the probability of no refresh, \(w_t\) is the probability of exactly one refresh, and \(p_t\) is a signed regional imbalance, not itself an event probability. Define
For \(t\geq1\), define
\begin{equation}
 r_t=(1-q)^t,\quad p_t=(1-2q)^t,\quad
 w_t=tq(1-q)^{t-1},
 \label{eq:rr-profile-scalars}
\end{equation}
where \(r_t\) is the probability of no refresh, \(w_t\) is the probability of exactly one refresh, and \(p_t\) is a signed regional imbalance. Then further define
\begin{equation}
\begin{aligned}
 b_t
 &:=\left[r_t+\frac{(1+p_t)/2-r_t}{d}\right]-\frac1D\\
 &=r_t\left(1-\frac1d\right)+\frac{p_t}{2d}
  =r_t\left(1-\frac2D\right)+\frac{p_t}{D},\\
 g_t(v)&:=\left(w_t|v|-\frac{p_t}{2}\right)_+,
 \qquad x_+:=\max\{x,0\}.
\end{aligned}
 \label{eq:rr-baseline-hinge}
\end{equation}
$b_t$ is the coefficient-zero TV distance from stationarity; \(g_t(v)/d\) is the additional distance of a $v$-coefficient chain ($|v|>0$) when the exactly-one-refresh perturbation, $w_t|v|$, exceeds the regional imbalance, $p_t/2$. We will refer to $g_t(v)$ as the hinge contribution to the mixing coefficient.

\begin{lemma}
\label{lem:common-average-mixing-coefficient}
For the Markov chain uniquely defined by $(P_\theta,\pi)$, satisfying \Cref{asm:1}, we have
\begin{align}
 \beta_{P_\theta}(t)
 &=b_t+\frac4{D^2}\sum_{e=1}^{2M}g_t(\theta_e),\qquad \forall t\geq 1.
 \label{eq:rr-average-profile}
\end{align}
\end{lemma}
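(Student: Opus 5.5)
The plan is a direct computation. First identify $\pi$. Then write $P_\theta^t(x,\cdot)$ in closed form by conditioning on the number $N\sim\mathrm{Bin}(t,q)$ of refreshes in $t$ steps. Finally compute each $d_x(t)$ as the sum of positive parts of $P^t_\theta(x,\cdot)-\pi$, and average.

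\emph{Stationarity and the closed form.} Every row and every column of $K_\theta$ sums to one, as noted after \eqref{eq:rr-payload-block}, so $K_\theta$ is doubly stochastic. The uniform law $\pi\equiv 1/D$ therefore satisfies $\pi P_\theta=\pi$, and it is the unique stationary law by \Cref{lem:checkerboard-kernel}. Two facts about the refresh rules drive the closed form: $U_dK_\theta=U_d$ (columns of $K_\theta$ sum to one) and $K_\theta U_d=U_d$ (rows sum to one). Consequently, once a right-to-left refresh has occurred, the chain is uniform on whichever region it occupies. The parity of $N$ fixes that region, and $\Prob(N\text{ even})=(1+p_t)/2$. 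Starting from $L_i$, this gives
\[
P_\theta^t(L_i,\cdot)=r_t\delta_{L_i}+\Big(\tfrac{1+p_t}{2}-r_t\Big)\mathrm{Unif}(\mathsf X_{\rm L})+w_tK_\theta(i,\cdot)+\Big(\tfrac{1-p_t}{2}-w_t\Big)\mathrm{Unif}(\mathsf X_{\rm R}).
\]
Starting from $R_j$, a single refresh already lands uniformly on the left, so
\[
P_\theta^t(R_j,\cdot)=r_t\delta_{R_j}+\Big(\tfrac{1+p_t}{2}-r_t\Big)\mathrm{Unif}(\mathsf X_{\rm R})+\tfrac{1-p_t}{2}\mathrm{Unif}(\mathsf X_{\rm L}).
\]

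\emph{Sign analysis.} Since $q\le 1/2$, we have $0\le 1-2q\le(1-q)^2$, hence $0\le p_t\le r_t^2\le r_t$.

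For a right start $R_j$, compare each coordinate with $1/D=1/(2d)$:
\begin{itemize}
\item at $R_j$ itself the excess is $r_t+((1+p_t)/2-r_t)/d-1/D=b_t>0$;
\item at the other right states the difference is $(p_t-2r_t)/(2d)<0$;
\item at every left state the difference is $-p_t/(2d)\le0$.
\end{itemize}
Hence $d_{R_j}(t)=b_t$.

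For a left start $L_i$, the left coordinates behave symmetrically: excess $b_t$ at $L_i$ and negative differences elsewhere on the left. A right coordinate $y$ lying in block $e$ has difference
\[
\frac1d\Big(w_t\big(dK_\theta(i,y)-1\big)-\frac{p_t}{2}\Big),\qquad dK_\theta(i,y)-1=\pm\theta_e .
\]
Within the row of block $e$, one entry carries $+\theta_e$ and the other $-\theta_e$. Because $p_t\ge0$, exactly one of the two can be positive, and the pair contributes $g_t(\theta_e)/d$ in total. Row $i$ meets $d/2$ blocks, so
\[
d_{L_i}(t)=b_t+\frac1d\sum_{e\ni i}g_t(\theta_e).
\]

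\emph{Averaging.} Average under $\pi\equiv 1/D$. Each block $e$ contains exactly two source rows, so $\sum_i\sum_{e\ni i}g_t(\theta_e)=2\sum_e g_t(\theta_e)$. Therefore
\[
\beta_{P_\theta}(t)=b_t+\frac{2}{Dd}\sum_{e=1}^{2M}g_t(\theta_e)=b_t+\frac{4}{D^2}\sum_{e=1}^{2M}g_t(\theta_e),
\]
which is \eqref{eq:rr-average-profile}.

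The main obstacle is the sign bookkeeping. It requires $p_t\le r_t$, so that only the diagonal atom is positive, and $p_t\ge0$, so that at most one entry of each $2\times2$ block row is positive. Both follow from $q=1/R\le1/100$ under \Cref{asm:1}.
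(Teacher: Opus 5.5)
Your proof is correct and follows essentially the same route as the paper: conditioning on $N\sim\mathrm{Bin}(t,q)$ is the probabilistic form of the paper's binomial expansion of $P_\theta=(1-q)I_D+qS_\theta$, in which $S_\theta^j$ is parameter-free for $j\ge2$. It is followed by the same per-block pairing of deviations $(\pm w_t\theta_e-p_t/2)/d$ giving $g_t(\theta_e)/d$, and the same stationary averaging over the two source rows of each block. Your explicit check that $0\le p_t\le r_t^2\le r_t$, which makes the initial state the only positive deviation in its own region and allows at most one positive entry per block pair, fills in a step the paper only asserts.
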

At \(t=2R\), the coefficient-zero baseline gives \(\beta_\theta(2R)>0.12>\xi_0\). At \(t=10R\), the upper envelope gives \(\beta_\theta(10R)<2\cdot10^{-4}<\xi_0\). Total-variation contraction therefore proves \eqref{eq:rr-theorem-target-scale}.
%Similarly, \(d_\theta(R/2)>1/4\) and \(d_\theta(4R)<0.056<1/4\). 
\begin{lemma}
\label{lem:checkerboard-mixing-scales}
Under Assumption~\ref{asm:1}, every kernel $P=P_\theta$ of the form \eqref{eq:rr-kernel} satisfies
\[
 \frac R2<t_{\mathrm{mix},P}
 \leq4R
 <2t_{\mathrm{mix},P}^{\sharp}\big(\frac1{20}\big)
 \leq10t_{\mathrm{mix},P}.
\]
\end{lemma}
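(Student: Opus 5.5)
The chain of inequalities splits into four comparisons. Two of them follow from results already in hand, namely \eqref{eq:rr-theorem-target-scale} and the submultiplicativity bound $\tmix(\xi)\leq\lceil\log_2(1/\xi)\rceil\tmix$. The other two need direct estimates of $d_{\max}$ at the times $R/2$ and $4R$. Throughout, $q=1/R$ with $R\geq100$. I use that $d_{\max}$ is nonincreasing in $t$ (total-variation contraction), so that $d_{\max}(t_0)>1/4$ gives $\tmix>t_0$ and $d_{\max}(t_0)\leq1/4$ gives $\tmix\leq t_0$.

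\emph{Lower bound $R/2<\tmix$.} Fix any state $x$ and take $t=R/2$, an integer since $10\mid R$. The chain is still at $x$ whenever no refresh has occurred, so $P^t(x,\{x\})\geq r_t=(1-1/R)^{R/2}$. For $R\geq100$ this is at least roughly $e^{-1/2}(1-1/R)^{1/2}>0.6$. Since $\pi(x)=1/D\leq1/64$ (the regions are symmetric and $K_\theta$, $U_d$ are doubly stochastic, so $\pi$ is uniform),
\[
 d_x(t)\geq P^t(x,\{x\})-\pi(x)>0.6-1/64>1/4 .
\]
Hence $\tmix>R/2$.

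\emph{Upper bound $\tmix\leq4R$.} Let $N_t$ be the number of refreshes by time $t$; it is $\mathrm{Binomial}(t,q)$, independent of the positions. The region process is a two-state chain flipping with probability $q$, so its law at time $t$ is $\big((1\pm p_t)/2\big)$, with imbalance $p_t$ relative to the stationary $(1/2,1/2)$. I then condition on whether $N_t\geq2$:
\begin{itemize}[label={}]
\item On $\{N_t\geq2\}$, the position within the current region is exactly uniform. A right-to-left refresh lands uniformly in $\mathsf X_{\rm L}$. A left-to-right refresh from a uniform left state lands uniformly in $\mathsf X_{\rm R}$, because $K_\theta$ is doubly stochastic. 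After any two refreshes one of these has applied to the latest step, with uniformity preserved.
\item Consequently $P^t(x,\cdot)$ differs from the law $\mu_t$ (region law $\times$ uniform-within-region) only on an event of probability at most $\Prob(N_t<2)=r_t+w_t$.
\item Also $\|\mu_t-\pi\|_{\TV}$ is the region-law distance $|p_t|/2$, up to the small correction from conditioning on $N_t\geq2$.
\end{itemize}
This gives $d_x(t)\leq 2(r_t+w_t)+|p_t|/2$ (with a factor of at most $2$ on the first term). At $t=4R$ I get $r_t\leq e^{-4}\approx0.018$, $w_t\leq4e^{-4}(1-q)^{-1}\approx0.075$ and $p_t\leq e^{-8}$. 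The total is below $0.19<1/4$, so $\tmix\leq4R$.

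\emph{Middle and right comparisons.} By \eqref{eq:rr-theorem-target-scale}, which rests on $\beta_\theta(2R)>0.12>\xi_0$ together with total-variation contraction, $\tmix^\sharp(1/20)>2R$, so $4R<2\tmix^\sharp(1/20)$. For the last inequality I use $\tmix^\sharp(1/20)\leq\tmix(1/20)\leq\lceil\log_2 20\rceil\,\tmix=5\tmix$, which gives $2\tmix^\sharp(1/20)\leq10\tmix$.

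The main obstacle is the bookkeeping in the upper bound: justifying that after two refreshes the within-region law is exactly uniform and independent of the region process, and keeping the constants in $d_x(4R)$ safely below $1/4$. The generous margin (about $0.19$ against $0.25$) suggests that crude bounds will suffice.
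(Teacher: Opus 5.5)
Your proof is correct, and three of the four comparisons follow the paper exactly. The lower bound uses the same mechanism: the no-refresh mass $r_t$ stuck at the initial state, which the paper packages as $d_\theta(R/2)\geq b_{R/2}\geq\frac{31}{32}e^{-50/99}$. The last two inequalities come from \eqref{eq:rr-theorem-target-scale} and $\tmix^\sharp(1/20)\leq\tmix(1/20)\leq5\tmix$, just as in the paper.

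The one real difference is how you show $d_{\max}(4R)\leq1/4$. The paper reads it off the row calculation behind Lemma~\ref{lem:common-average-mixing-coefficient}. Expanding $P_\theta^t=((1-q)I_D+qS_\theta)^t$ and using that $S_\theta^2=\operatorname{diag}(U_d,U_d)$ is parameter-free gives the exact two-sided envelope $b_t\leq d_\theta(t)\leq b_t+w_t/2$. Hence $d_\theta(4R)\leq e^{-4}+e^{-8}/64+2e^{-3.99}\approx0.055$.

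You instead condition on the refresh count $N_t$. You use that two refreshes make the within-region position exactly uniform, which is the probabilistic content of $S_\theta^2=\operatorname{diag}(U_d,U_d)$ together with $K_\theta U_d=U_dK_\theta=U_d$. You then charge the event $\{N_t<2\}$ at full cost. This gives the cruder bound $2(r_t+w_t)+p_t/2\approx0.185$; your conditioning argument actually yields the factor $\tfrac32$ in place of $2$. Either way it clears $1/4$ comfortably.

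Your route is self-contained and avoids tracking the signed row deviations. The paper's route is sharper, and it gets both envelopes from a single computation that it also reuses for the $\beta$-profile \eqref{eq:rr-average-profile} and for \eqref{eq:rr-theorem-target-scale}. So in the paper the upper bound comes essentially for free.
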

With $T_{\rm late}:=t_{{\rm mix}, P_\theta}^\sharp(\xi_0(1-\epsilon))$ as shorthand, we get
\begin{equation}
 \begin{aligned}
     \TV\bigl(Q_{P}^{(T_{\rm late})},\pi\otimes\pi\bigr) &= \frac{1}{2}\sum_{x,y}{\pi(x)\big|P^{(T_{\rm late})}(x,y)-\pi(y)\big|}\\
     &= \sum_x{\pi(x)\TV(P^{(T_{\rm late})}(x,\cdot),\pi)}\\
     &=\beta_{P}(T_{\rm late})\leq\xi_0-\eta.
 \end{aligned}
 \label{eq:rr-pair-TV}
\end{equation}
For probability masses \(Q,U\), the pointwise inequality
$$
\sqrt{Q(z)U(z)}\geq\min\{Q(z),U(z)\}
$$
implies \(\sum_z\sqrt{Q(z)U(z)}\geq1-\TV(Q,U)\). Apply this with \(Q=Q_{P_\theta}^{(T_{\rm late})}\) and \(U=\pi\otimes\pi\), which is uniform on the \(D^2\) ordered pairs. Together with Cauchy--Schwarz for the converse bound, this gives \eqref{eq:rr-theorem-J-scale}.

\subsubsection{Common-coefficient checkerboard}
\label{subsec:rr-confidence}

Set every one of the \(2M\) block parameters equal to the scalar \(c\), and write \(\theta(c):=(c)_{e=1}^{2M}\), \(P_c:=P_{\theta(c)}\), and \(\beta_c:=\beta_{P_c}\). Then, the specific block visited is irrelevant in estimating $c$, the family is statistically one-dimensional, despite the support of $Q_P^{(s)}(x,y)$ being of order \(D^2\). So, the $\beta$-mixing coefficient becomes
\begin{equation}
 \beta_c(t)=b_t+\frac14g_t(c).
 \label{eq:rr-common-profile}
\end{equation}
Now we show that there exist two chains that are well separated in our loss function. Take $s_{0,1}=33R/10$ and write \(r=r_{s_{0,1}}\), \(p=p_{s_{0,1}}\), \(w=w_{s_{0,1}}\), and \(b=b_{s_{0,1}}\).
\begin{lemma}
\label{lem:rr-confidence-calibration}
For all \(0<\epsilon\leq1/4\), there exist coefficients satisfying
\begin{equation}
 0<c_0<c_1<0.99,
 \qquad c_1-c_0<71\eta.
 \label{eq:rr-confidence-coefficient-range}
\end{equation}
For these coefficients,
\begin{equation}
 \beta_{c_0}(s_{0,1})=\xi_0-\eta,
 \qquad
 \beta_{c_1}(s_{0,1})=\xi_0+\frac{21}{20}\eta.
 \label{eq:rr-confidence-profile-values}
\end{equation}
Consequently, $t_{{\rm mix},P_{c_0}}^\sharp(\xi_0(1-\epsilon))\leq s_{0,1}<t_{{\rm mix},P_{c_1}}^\sharp(\xi_0(1+\epsilon))$, and the two output sets are disjoint.
\end{lemma}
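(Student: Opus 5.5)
The plan is to use the explicit common-coefficient profile \eqref{eq:rr-common-profile}. At the lag $s:=s_{0,1}=33R/10$, which is an integer because $R$ is divisible by ten, it reads $\beta_c(s)=b+\tfrac14\bigl(w|c|-p/2\bigr)_+$. As a function of $c\ge0$ this is constant (equal to $b$) up to the hinge point $c=p/(2w)$ and then increases linearly with slope $w/4$. So the two equations in \eqref{eq:rr-confidence-profile-values} can be inverted in closed form:
\[
 c_0=\frac{4(\xi_0-\eta-b)+p/2}{w},\qquad
 c_1=\frac{4(\xi_0+\tfrac{21}{20}\eta-b)+p/2}{w}.
\]
This inversion is legitimate provided $b<\xi_0-\eta$. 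Under that condition both bracketed quantities are positive, the hinge is active, and $0<c_0<c_1$. It also gives immediately
\[
 c_1-c_0=\frac{4\cdot\tfrac{41}{20}\eta}{w}=\frac{8.2\,\eta}{w}.
\]

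Everything therefore reduces to numerical two-sided bounds on $r,p,w,b$ at $t=3.3R$, uniformly over $R\ge100$ and $D\ge64$.

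\emph{Upper bounds.} From $(1-q)^t\le e^{-qt}$ we get $r\le e^{-3.3}\approx0.0369$ and $p\le e^{-6.6}\approx0.0014$. Hence $b\le r+p/D<0.0370$. Since $\epsilon\le1/4$, we have $\xi_0-\eta\ge0.0375$, so $b<\xi_0-\eta$ as required.

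\emph{Lower bounds.} From $(1-q)^t\ge e^{-qt/(1-q)}$ with $q\le1/100$ we get $r\ge e^{-3.3/0.99}\approx0.0357$. The same estimate gives $w=3.3(1-q)^{3.3R-1}\ge 3.3\,r\ge0.1178$, and $b\ge r(1-2/D)\ge\tfrac{31}{32}r\ge0.0346$.

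\emph{Checking the claimed range.} The lower bound on $w$ gives $c_1-c_0\le 8.2\eta/0.1178<70\eta<71\eta$. For $c_1<0.99$, combine the lower bound on $b$ with $\eta\le1/80$:
\[
 c_1\le\frac{4(0.05+0.0132-0.0346)+0.0007}{0.1178}\approx0.98.
\]

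I expect this last inequality to be the main obstacle, because the margin is thin. Crude estimates, such as dropping $b$ or using $b\ge0$, give $c_1>1$. The argument must therefore use the lower bound on $b$ and the upper bound on $p$ simultaneously. If the constants are too tight, I would redo the exponential bounds keeping explicit $R$-dependence, for example $(1-q)^{t}\ge e^{-qt}(1-q^2t)$, and verify the $R=100$ worst case directly.

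For the consequences, first note that $\beta_{c_0}(s)=\xi_0-\eta=\xi_0(1-\epsilon)$. By the definition \eqref{eqn: average mixing time} this gives $t^\sharp_{\mathrm{mix},P_{c_0}}(\xi_0(1-\epsilon))\le s$.

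For $c_1$, the key point is that $\beta$ is nonincreasing in $t$. This follows from TV contraction: $\|P^{t+1}(x,\cdot)-\pi\|_{\TV}=\|\sum_y P(x,y)(P^t(y,\cdot)-\pi)\|_{\TV}$ is at most the $P(x,\cdot)$-average of $d_y(t)$, and stationarity of $\pi$ then gives $\beta(t+1)\le\beta(t)$. Hence for every $t\le s$,
\[
 \beta_{c_1}(t)\ge\beta_{c_1}(s)=\xi_0+\tfrac{21}{20}\eta>\xi_0(1+\epsilon),
\]
so $t^\sharp_{\mathrm{mix},P_{c_1}}(\xi_0(1+\epsilon))>s$.

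Finally, the output set for $P_{c_0}$ is $I_{P_{c_0}}(\xi_0,\epsilon)$, whose right endpoint is at most $s$. The output set for $P_{c_1}$ is $I_{P_{c_1}}(\xi_0,\epsilon)$, whose left endpoint is strictly greater than $s$. The two intervals are therefore disjoint.
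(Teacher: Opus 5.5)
Your proposal is correct and matches the paper's proof essentially step for step. You use the same closed-form inversion of the active hinge for $c_0,c_1$, the same numerical bounds on $r,p,w,b$ at $s_{0,1}$ (which the paper packages as a separate lemma), and monotonicity of $\beta$ for the crossing-time ordering; you usefully justify that monotonicity via TV contraction, where the paper only cites it. One cosmetic slip: $r\geq e^{-10/3}\approx 0.03567$ only yields $w\geq 0.1177$ and $b\geq 0.03456$, not $0.1178$ and $0.0346$, but with the corrected constants you still get $c_1<0.977$ and $c_1-c_0<70\eta$, so the conclusions are unaffected.
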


The initial stationary laws agree, so the path factorisation gives \(m-1\) times the stationary-average one-step divergence. A row in \(\mathsf X_{\rm R}\) is parameter-free. In a row \(x\in\mathsf X_{\rm L}\), the self-loop is also common (to both chains), and direct summation over the two checkerboard parities gives
\[
 \KL(P_{c_0}(x,\cdot)\| P_{c_1}(x,\cdot))
 =qd_{\rm KL}((1+c_0)/2\| (1+c_1)/2).
\]
By the Markov property, stationarity, and the fact that $\pi(\mathsf X_{\rm L})=1/2$, using~\eqref{eq:rademacher-kl-bound} and
\eqref{eq:rr-confidence-coefficient-range}, we get
\begin{equation}
\KL(\Prob_{P_{c_0}}^{(m)}\|\Prob_{P_{c_1}}^{(m)})
 \leq 132000\frac{(m-1)\eta^2}{R}.
 \label{eq:rr-confidence-kl}
\end{equation}

Thresholding any estimator on the two disjoint success sets gives a test. Then Bretagnolle--Huber inequality~\citep[Lemma~2.6]{Tsybakov_2009} and~\eqref{eq:rr-confidence-kl} imply that uniform risk at most \(\delta\) requires
\begin{equation}
 m-1\geq\frac{R}{132000\eta^2}\log\frac1{4\delta}
 \label{eq:rr-confidence-delta-bound}
\end{equation}
Substituting \(\eta=\epsilon/20\) and the bound
\(t_{\mathrm{mix},P_i}\leq4R\) from
Lemma~\ref{lem:checkerboard-mixing-scales} into
\eqref{eq:rr-confidence-delta-bound} proves
\eqref{eq:rr-confidence-component-rate}. The confidence refinement follows from Lemma~\ref{lem:confidence-log-refinement}.

\subsubsection{Varying-coefficient checkerboard}
\label{subsec:varying-rare-refresh}

For the effective-support bottleneck, $M$ control blocks (with a common coefficient) centre the two prior means of the $\beta$-mixing coefficient around the threshold $\xi_0$, while independent, varying coefficients on $M$ hidden blocks spread information across many source--destination pairs. The hidden-coefficient priors have different expected hinge contributions but matching moments through degree $\ell$. Calibration and concentration turn the hinge gap into separation of the $\beta$-mixing coefficient at the chosen time-lag, while moment matching cancels the low-order terms in the Poissonised likelihood comparison.

Concretely, the standard moment-matching construction \citep{Fuzzy_Le_Cam_1,Fuzzy_Le_Cam_2} gives symmetric laws $\nu_0,\nu_1$ on $[-1,1]$ whose moments through the even degree $\ell$ agree and whose oriented positive-part gap $\gamma_\ell$ satisfies
\begin{equation}
 \E_{\nu_1}Z_+-\E_{\nu_0}Z_+=\gamma_\ell,
 \qquad \gamma_\ell\geq\frac{a_\star}{\ell}.
 \label{eq:loc-unit-hinge-gap}
\end{equation}
At the integer time-lag \(s_{0,2}=16R/5\), abbreviate \(p=p_{s_{0,2}}\), \(w=w_{s_{0,2}}\), \(b=b_{s_{0,2}}\), and \(\tau=p/(2w)\). Define
\begin{equation}
 h:=\frac{32\eta}{w\gamma_\ell},
 \qquad V:=hZ,
 \qquad U:=\tau+V.
 \label{eq:loc-translated-prior}
\end{equation}
Write $\mu_i$ for the law of $U$ when $Z\sim\nu_i$.
\begin{lemma}[Hidden-coefficient support]
\label{lem:loc-hidden-coefficient-support}
Under \Cref{asm:2}, for both $i\in\{0,1\}$, $\mu_i$ is supported on
$[\tau/2,3\tau/2]\subset(0,3/200)$.
\end{lemma}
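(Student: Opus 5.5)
The support of $\mu_i$ is the translate by $\tau$ of the support of $V=hZ$. Since $\nu_i$ lives on $[-1,1]$, $\mu_i$ is supported in $[\tau-h,\tau+h]$. It therefore suffices to prove two things under \Cref{asm:2}: first $h\leq\tau/2$, and second $\frac32\tau<3/200$, i.e.\ $\tau<1/100$.

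\textbf{Size of $\tau$.} At $s_{0,2}=16R/5$ with $q=1/R$ we have $p=(1-2q)^{16R/5}$ and $w=\frac{16R}{5}q(1-q)^{16R/5-1}=\frac{16}{5}(1-q)^{16R/5-1}$. Hence
\[
 \tau=\frac{p}{2w}=\frac{5}{32}(1-q)\Bigl(\frac{1-2q}{1-q}\Bigr)^{16R/5}.
\]
We use $\log\frac{1-2q}{1-q}=\log\bigl(1-\frac{q}{1-q}\bigr)\leq-\frac{q}{1-q}\leq -q$. This gives $\tau\leq\frac{5}{32}e^{-16/5}\approx 0.0064<1/100$, which settles the inclusion $[\tau/2,3\tau/2]\subset(0,3/200)$. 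Positivity of $\tau$ is clear because $p,w>0$. For the next step we also need a lower bound on $\tau$. For $R\geq R_0$ (say $R\geq100$) the bound $\log(1-x)\geq -x-x^2$ with $x=q/(1-q)$ gives $\tau\geq\frac{5}{32}(1-q)e^{-(16/5)(1+O(q))}\geq 0.006$. We also have $w\geq\frac{16}{5}e^{-16/5}\cdot(1-O(q))\geq 0.13$.

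\textbf{Size of $h$.} By definition $h=32\eta/(w\gamma_\ell)$ with $\eta=\xi_0\epsilon=\epsilon/20$. Using \eqref{eq:loc-unit-hinge-gap}, $\gamma_\ell\geq a_\star/\ell$, so
\[
 h\leq\frac{32\,\epsilon\,\ell}{20\,w\,a_\star}.
\]
\Cref{asm:2} gives $\epsilon\leq 20c_{\rm acc}/\ell$. Substituting,
\[
 h\leq\frac{32\,c_{\rm acc}}{w\,a_\star}\leq\frac{32\,c_{\rm acc}}{0.13\,a_\star}.
\]
It remains to check that this is at most $\tau/2\geq0.003$. That holds exactly when $c_{\rm acc}\leq 0.003\cdot0.13\,a_\star/32$. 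I would verify this against the explicit definition of $c_{\rm acc}$ in Appendix~\ref{app:rr-constants}, where it is presumably chosen as a small multiple of $a_\star$ for precisely this purpose; the remark that \Cref{asm:2} forces $\epsilon\lesssim10^{-6}$ is consistent with this. If instead the appendix defines $c_{\rm acc}$ differently, I would enlarge $R_0$ so the estimates on $w$ and $\tau$ are sharp and recheck the numeric margin.

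\textbf{Main obstacle.} The only delicate step is the constant bookkeeping linking $c_{\rm acc}$, $a_\star$ and the uniform-in-$R$ lower bounds on $w$ and $\tau$ at the non-standard lag $16R/5$. Everything else is the elementary inequalities above.
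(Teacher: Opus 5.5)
Your route is the paper's: the support of $\mu_i$ lies in $[\tau-h,\tau+h]$, so the lemma reduces to $h\le\tau/2$ plus an upper bound on $\tau$. Your bound $\tau\le\frac{5}{32}e^{-16/5}<1/100$ is correct. The gap is that the lemma's only real content, the constant comparison behind $h\le\tau/2$, is left as ``presumably'' rather than checked. You also set that comparison up in a way that hides a cancellation. Since $\tau/2=p/(4w)$ and $h=32\eta/(w\gamma_\ell)$, the factor $w$ cancels:
\[
 h\le\frac{\tau}{2}\iff 128\,\eta\le p\,\gamma_\ell .
\]
So no lower bounds on $w$ or $\tau$ are needed, only one on $p$. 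Insert $\eta=\epsilon/20\le c_{\rm acc}/\ell$, $\gamma_\ell\ge a_\star/\ell$, and the appendix's choice $c_{\rm acc}=e^{-320/49}a_\star/128$. The condition becomes $p\ge e^{-320/49}$. This is exactly the bound $p=(1-2/R)^{16R/5}\ge\exp\{-\tfrac{32}{5}\cdot\tfrac{R}{R-2}\}\ge e^{-320/49}$, valid for every $R\ge100$. That is the paper's proof: $c_{\rm acc}$ is tuned precisely to this lower bound on $p$.

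Bounding $w$ and $\tau$ separately loses the cancellation and leaves you a thin numerical margin. One of your intermediate claims is also false as stated. The quantity $w=\frac{16}{5}(1-1/R)^{16R/5-1}$ tends to $\frac{16}{5}e^{-16/5}\approx0.1304$ from below, and $w\approx0.1297<0.13$ at $R=100$. Your version therefore needs $R_0$ of order $180$, which Assumption~2 does permit. With that enlargement, your required inequality $c_{\rm acc}\le0.003\cdot0.13\,a_\star/32\approx1.22\times10^{-5}a_\star$ does hold, since $c_{\rm acc}\approx1.14\times10^{-5}a_\star$. The margin is only a few percent, whereas the cancellation argument is exact and already works at $R=100$.
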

Recall the definition of $g$ from \eqref{eq:rr-baseline-hinge}, then define
\begin{equation}
 \bar g_i:=\E_{\mu_i}g_{s_{0,2}}(U).
 \label{eq:loc-hinge-means}
\end{equation}
By Lemma~\ref{lem:loc-hidden-coefficient-support}, $U>0$. Since $w\tau=p/2$, the hinge contribution, $g_t(v)$, linearises exactly:
\begin{equation}
 \begin{aligned}
 g_{s_{0,2}}(\tau+hZ)&=whZ_+,\\
 \bar g_1-\bar g_0&=32\eta.
 \end{aligned}
 \label{eq:loc-exact-hinge}
\end{equation}
The above construction for an admissible hidden coefficient, $u$, via symmetry of $Z$, converts the absolute-first-moment separation of $Z$ into the exact separation of the hinge term in~\eqref{eq:loc-exact-hinge}, which then creates separation in the $\beta$-mixing coefficient.

For \(u=(u_1,\ldots,u_M)\), let \(\theta(c,u)\) assign \(c\) to every control block and \(u_e\) to hidden block \(e\), and abbreviate \(P_{(c,u)}:=P_{\theta(c,u)}\) and \(\beta_{(c,u)}:=\beta_{P_{(c,u)}}\). At the chosen lag, $s_{0,2}$,
\begin{equation}
 \beta_{(c,u)}(s_{0,2})
 =b+\frac18g_{s_{0,2}}(c)
 +\frac1{8M}\sum_{e=1}^M g_{s_{0,2}}(u_e).
 \label{eq:loc-split-profile}
\end{equation}
The hidden blocks therefore have total weight \(1/8\) in \(\beta_{(c,u)}(s_{0,2})\), so their expected contributions under the two priors differ by \((\bar g_1-\bar g_0)/8=4\eta\).
Calibrate the controls by
\begin{equation}
 g_{s_{0,2}}(c)
 =8(\xi_0-b)-\frac{\bar g_0+\bar g_1}{2}.
 \label{eq:loc-control-calibration}
\end{equation}
Under hypothesis \(i\), let \(\boldsymbol U=(U_1,\ldots,U_M)\sim\mu_i^{\otimes M}\), and write \(\E_i\) for expectation under this product law. The calibration centres the two prior means at \(\xi_0\). Indeed, taking expectations in~\eqref{eq:loc-split-profile} and inserting~\eqref{eq:loc-control-calibration}, then using \(\bar g_1-\bar g_0=32\eta\), gives
\[
 \E_0\beta_{(c,\boldsymbol U)}(s_{0,2})=\xi_0-2\eta,
 \qquad
 \E_1\beta_{(c,\boldsymbol U)}(s_{0,2})=\xi_0+2\eta.
\]

\begin{lemma}
\label{lem:coefficient-validity-check}
    $c,u_e\in (-1,1)~\forall ~e\in \{1,2,...,M\}$.
\end{lemma}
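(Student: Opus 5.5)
The hidden coefficients are handled by \Cref{lem:loc-hidden-coefficient-support}, and the control coefficient is handled by inverting the hinge and bounding the calibrated target in~\eqref{eq:loc-control-calibration}.

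\emph{Hidden coefficients.} Every $u_e$ lies in the support of $\mu_i$. By \Cref{lem:loc-hidden-coefficient-support} this support is contained in $[\tau/2,3\tau/2]\subset(0,3/200)\subset(-1,1)$, so nothing further is needed here.

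\emph{Control coefficient.} We choose $c>0$, so the hinge $g_{s_{0,2}}(c)=(wc-p/2)_+$ is strictly increasing in $c$ for $c>\tau$. Hence~\eqref{eq:loc-control-calibration} has a solution $c\in(\tau,1)$ exactly when the target value
\[
G:=8(\xi_0-b)-\tfrac{\bar g_0+\bar g_1}{2}
\]
satisfies $0<G<w-p/2$. In that case $c=(G+p/2)/w$.

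\emph{Numerical bounds at $s_{0,2}=16R/5$.} With $q=1/R$ and $R\geq R_0$, we have $r=(1-q)^{16R/5}\approx e^{-3.2}\approx0.0408$, $p\approx e^{-6.4}\approx0.00166$ and $w\approx3.2e^{-3.2}\approx0.130$. The errors are $O(1/R)$ and are absorbed by taking $R_0$ large. Since $D\geq D_0$, we get $b=r(1-2/D)+p/D\approx0.0408$, up to $O(1/D)$. Therefore $8(\xi_0-b)\approx8(0.05-0.0408)\approx0.074$.

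\emph{Hinge means.} From~\eqref{eq:loc-exact-hinge}, $\bar g_i=wh\,\E_{\nu_i}Z_+\leq wh=32\eta/\gamma_\ell\leq 32\eta\ell/a_\star$. Under \Cref{asm:2}, $\epsilon\leq20c_{\rm acc}/\ell$, so $\eta\ell=\xi_0\epsilon\ell\leq c_{\rm acc}$. Thus $\bar g_i\leq32c_{\rm acc}/a_\star$, which is tiny for the constants of Appendix~\ref{app:rr-constants} (consistent with $\epsilon\lesssim10^{-6}$). This gives $G\in(0.07,0.08)$, and hence
\[
c\approx(0.074+0.0008)/0.130\approx0.58\in(0,1).
\]

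\emph{Main obstacle.} The only delicate step is the explicit numerics. One needs rigorous two-sided bounds on $(1-q)^{16R/5}$ and $(1-2q)^{16R/5}$ uniformly in $R\geq R_0$, for instance $e^{-x/(1-x)\cdot t}\leq(1-x)^t\leq e^{-xt}$. One also needs to check that the constant relation between $c_{\rm acc}$ and $a_\star$ really makes $\bar g_0+\bar g_1$ small compared with the margin $0.07$. I would carry these out with crude interval bounds, which leave ample room given the margins $G\gg0$ and $c\ll1$.
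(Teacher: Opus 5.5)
Your proof is correct and follows the paper's route: the hidden coefficients come from \Cref{lem:loc-hidden-coefficient-support}, and the control coefficient is found on the positive linear branch of the hinge once the hinge means are shown to be negligible and $b,p,w$ are bounded at $s_{0,2}$. The differences are cosmetic. The paper bounds $(\bar g_0+\bar g_1)/2\leq wh\leq w\tau/2=p/4$; your bound $32c_{\rm acc}/a_\star=e^{-320/49}/4$ is equally small. The paper also uses the explicit bounds of \Cref{lem:rr-localised-lag-bounds}, valid for all $R\geq100$ and $D\geq64$, to get $0.556<c<0.753$ without needing $R_0,D_0$ large.
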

    
For \(i\in\{0,1\}\), define
\begin{align*}
    \mathcal{U}_0 &:= \bigg\{u\in \rm{supp}(\mu_0)^M : \frac{1}{M} \sum_{e=1}^M{g_{s_{0,2}}(u_e)}\leq \bar{g}_0 + 8\eta\bigg\}\\
    \mathcal{U}_1 &:= \bigg\{u\in \rm{supp}(\mu_1)^M : \frac{1}{M} \sum_{e=1}^M{g_{s_{0,2}}(u_e)} > \bar{g}_1 - 8\eta\bigg\}.
\end{align*}
Let $\mathcal{P}_i := \{P_{(c,u)}:u\in \mathcal{U}_i\},\quad i\in\{0,1\}$ be the classes of checkerboard kernels with coefficients belonging to $\mathcal{U}_i$, defined above.

\begin{lemma}[Prior concentration and separation]
  \label{lem:loc-profile-concentration}
  For \(i\in\{0,1\}\), we have $\Prb_{\mu_i^{\otimes M}}(\mathcal{U}_i)\geq 79/80$. Moreover, every \(P\in\mathcal P_0\) satisfies \(\beta_P(s_{0,2})\leq(1-\epsilon)/20\), whereas every \(P\in\mathcal P_1\) satisfies \(\beta_P(s_{0,2})>(1+\epsilon)/20\).
\end{lemma}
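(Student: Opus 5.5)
The claim has two parts: a probability bound for $\mathcal U_i$ under the prior, and a deterministic separation of $\beta_P(s_{0,2})$ on $\mathcal U_i$. Both will come from \eqref{eq:loc-split-profile}, \eqref{eq:loc-exact-hinge} and the control calibration \eqref{eq:loc-control-calibration}.

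\emph{Concentration.} Under $\mu_i^{\otimes M}$ the summands $Y_e:=g_{s_{0,2}}(U_e)$ are i.i.d. with mean $\bar g_i$. By Lemma~\ref{lem:loc-hidden-coefficient-support} we have $U_e>0$, so \eqref{eq:loc-exact-hinge} gives $Y_e=whZ_{e,+}$ with $Z_e\in[-1,1]$. Hence $Y_e\in[0,wh]$, where $wh=32\eta/\gamma_\ell\le 32\eta\ell/a_\star$.

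I would apply Hoeffding's inequality (or simply Chebyshev) to the empirical mean. This gives
\[
\Prb\Bigl(\bigl|\tfrac1M\textstyle\sum_e Y_e-\bar g_i\bigr|\ge 8\eta\Bigr)\le 2\exp\bigl(-2M(8\eta)^2/(wh)^2\bigr)=2\exp\bigl(-M\gamma_\ell^2/8\bigr).
\]
Using $\gamma_\ell\ge a_\star/\ell$ and $M\ge C_{\rm size}\ell^2$, the exponent is at least $C_{\rm size}a_\star^2/8$. Choosing $C_{\rm size}$ large makes the right-hand side at most $1/80$.

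The one-sided events are exactly the complements of the events defining $\mathcal U_0$ and $\mathcal U_1$. Since $\boldsymbol U\in\mathrm{supp}(\mu_i)^M$ almost surely, this yields $\Prb_{\mu_i^{\otimes M}}(\mathcal U_i)\ge 79/80$.

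\emph{Separation.} Substitute \eqref{eq:loc-control-calibration} into \eqref{eq:loc-split-profile}. This gives
\[
\beta_{(c,u)}(s_{0,2})=\xi_0-\tfrac{\bar g_0+\bar g_1}{16}+\tfrac1{8M}\textstyle\sum_e g_{s_{0,2}}(u_e).
\]
On $\mathcal U_0$ the empirical mean is at most $\bar g_0+8\eta$. Using $\bar g_1-\bar g_0=32\eta$, we get $\beta\le \xi_0+(\bar g_0-\bar g_1)/16+\eta=\xi_0-2\eta+\eta=\xi_0-\eta=(1-\epsilon)/20$.

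On $\mathcal U_1$ the empirical mean exceeds $\bar g_1-8\eta$, so $\beta>\xi_0+2\eta-\eta=\xi_0+\eta=(1+\epsilon)/20$.

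Lemma~\ref{lem:coefficient-validity-check} guarantees that every such $(c,u)$ gives a valid kernel, so these bounds hold for every $P\in\mathcal P_i$.

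\emph{Main obstacle.} The only delicate point is the constant bookkeeping in the concentration step. The range $wh$ scales like $\eta\ell$, so the deviation $8\eta$ is a constant multiple of $\gamma_\ell$ times the range. This is exactly why the assumption $M\gtrsim\ell^2$ in \Cref{asm:2} is needed. I would fix $C_{\rm size}\ge 8\log(160)/a_\star^2$ in Appendix~\ref{app:rr-constants}.

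The separation step itself is exact arithmetic.
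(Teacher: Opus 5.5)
Your proof is correct and follows the paper's argument. The paper applies Hoeffding to the i.i.d.\ $[0,wh]$-valued hinge terms, with exponent $2M(8\eta)^2/(wh)^2=M\gamma_\ell^2/8$, and then uses the same exact calibration arithmetic to obtain $\xi_0\mp\eta$ on $\mathcal U_0$ and $\mathcal U_1$.

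The only difference is a constant. The paper applies the one-sided Hoeffding bound separately to each $\mathcal U_i^c$; as you note, these are one-sided events. That removes your factor of $2$, so the paper's fixed $C_{\rm size}=8\log 80/a_\star^2$ already gives $\exp(-\log 80)=1/80$. There is no need to enlarge it to $8\log 160/a_\star^2$. Note also that Chebyshev would not suffice with that constant: it only yields $4/(M\gamma_\ell^2)\leq 1/(2\log 80)\approx 0.11$.
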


We next expose the statistical experiment generated by the hidden blocks. Represent each transition opportunity by an independent Bernoulli-\(q\) refresh flag. For a coefficient vector \(u\), let \(\widetilde Q_{c,u}(i,j):=K_{\theta(c,u)}(i,j)/d\) be the law of a left-source, right-destination pair. Pre-draw iid \(\widetilde Q_{c,u}\) pairs for left-to-right refreshes. Add an independent synthetic \(\widetilde Q_{c,u}\) pair at every right-to-left refresh for a coupled chain. At every left-to-right refresh, the source label is uniform, either by initialisation at stationarity or by the uniform right-to-left refresh. (See Section 4 in \cite{levin2017markov} for an introduction to probabilistic coupling.) Each resulting mark selects one of the \(2M\) blocks uniformly and, conditional on hidden block \(e\), has parity $\Prob(S=\pm 1\mid e,u_e)=\frac{1\pm u_e}{2}$. Together with the ancillary uniforms, these flags and marks determine the original path by parameter-free post-processing, so the marked experiment is more informative.
% readability // intuition // rewrite long sentences. // proof plans // sketches

Let $N^\circ\sim\operatorname{Poi}\{2(m-1)\}$ be the number of transition opportunities. Conditional on the full coefficient vector, independently mark each opportunity by no refresh or by its refresh block and parity. The Poisson marking and restriction theorems \citep{last2017lectures} imply that the counts in these disjoint categories are independent. A specified hidden block is selected with probability $q/(2M)$ per opportunity, so its total rate is $\lambda:=(m-1)q/M$; conditional on its coefficient $U$, splitting by parity gives the independent counts
\begin{align*}
 X_+&\sim\operatorname{Poi}\left\{\lambda\frac{(1+U)}{2}\right\},\quad X_-\sim\operatorname{Poi}\left\{\lambda\frac{(1-U)}{2}\right\}.
\end{align*}
Conditional on the counts $(X_{e,+},X_{e,-})_{e\in\{1,...,M\}}$, all discarded data have the same law under both priors: the no-refresh and control counts have their common Poisson laws, the cells within each parity and the ordering can be sampled uniformly, and the initial region and ancillary variables have common parameter-free laws. Hence a single reconstruction kernel recovers the Poissonised marked experiment from the count vector; retaining only the counts loses no information about the hypothesis.
Let \(F_u\) denote the product-Poisson block law of $(X_+,X_-)$ at coefficient \(u\), and put
\(\Lambda_{\tau+v}:=dF_{\tau+v}/dF_\tau\). The shifted cross-moment identity
\begin{equation}
 \E_{F_\tau}\{\Lambda_{\tau+v}\Lambda_{\tau+z}\}
 =\exp\left\{\frac{\lambda vz}{1-\tau^2}\right\},
 \label{eq:loc-shifted-poisson-cross}
\end{equation}
allows us to compare the two likelihood ratios using moments.
Let $\overline\Lambda_i:=\E_{V\sim h\nu_i}\Lambda_{\tau+V}$. Applying the power-series expansion to \eqref{eq:loc-shifted-poisson-cross}, the matching moments cancel all terms of degree at most $\ell$. Since $|V|\leq h$ and using the triangle inequality, the $L^2(F_\tau)$ distance between the two single-block mixture likelihood ratios satisfies
  \begin{align*}
   \|\overline\Lambda_1-\overline\Lambda_0\|_{L^2(F_\tau)}^2
   \leq4\sum_{j=\ell+1}^\infty\frac{(Ah^2)^j}{j!},
   \qquad A:=\frac{\lambda}{1-\tau^2}.
  \end{align*}
For trajectory lengths satisfying $m-1\leq\frac{c_{\rm inf}M}{q\eta^2\ell}$, the choice of \(c_{\rm inf}\), Lemma~\ref{lem:rr-localised-lag-bounds}, and Stirling's elementary lower bound give
\begin{equation}
 Ah^2\leq\frac{\ell}{32e},\qquad
 \sum_{j=\ell+1}^\infty\frac{(Ah^2)^j}{j!}
 \leq2\,32^{-(\ell+1)}.
 \label{eq:loc-factorial-tail-spine}
\end{equation}
Let \(\overline{\mathbb P}^{\rm Poi}_i\) and \(\overline{\mathbb P}^{(m)}_i\) denote the prior-\(i\) Poissonised and fixed-size marked laws, respectively.
Cauchy--Schwarz and tensorisation over the \(M\) independent hidden blocks,
before conditioning, thus give
\begin{equation}
 \TV(\overline{\mathbb P}^{\rm Poi}_0,
     \overline{\mathbb P}^{\rm Poi}_1)
 \leq\sqrt2\,M\,32^{-(\ell+1)/2}<\frac1{80}.
 \label{eq:loc-product-TV-spine}
\end{equation}
%De-Poissonisation costs a factor smaller than two when \(m-1\geq3\): The (parameter-free) post-processing rule is that if $N^\circ\geq m-1$ then keep the first $m-1$ transition opportunities, otherwise return a dummy variable. Then, since the dummy variable outcome is equally likely under the two priors, the overall TV distance simplifies to $\Prb(N^\circ\geq m-1){\rm TV}(\bar {\mathbb P}_0^{m}, \bar {\mathbb P}_1^{m})$. Then $\Prb(N^\circ\geq m-1)$ is precisely the inverse-multiplicative cost of de-Poissonisation. The lower bound 2.13(b) in \cite{vershynin2018high} yields the upper bound  $\Prb(N^\circ < m-1)\leq \exp(-(m-1)/4)$. Then, $1/\Prb(N^\circ \geq m-1)\leq 2$ for every $m-1\geq 3$ and thus de-Poissonisation costs a factor of $2$ in the upper bound, and the fixed-size marked laws have total variation below \(1/40\). Conditioning on the two separating events adds at most \(1/80\) on each side, leaving total variation below \(1/20\).
De-Poissonisation costs a factor smaller than two for every \(m\geq 1\). Consider the parameter-free post-processing rule that retains the first \(m-1\) transitions when \(N^\circ\geq m-1\), and otherwise outputs a common dummy symbol \(\dagger\). Since \(N^\circ\) is independent of the transition sequence, the processed law under prior \(i\) is
$$
\Prb(N^\circ\geq m-1)\overline{\mathbb P}^{(m)}_i + \Prb(N^\circ<m-1)\delta_\dagger.
$$
The common dummy components cancel in total variation, so the data processing inequality implies
$$
\Prb(N^\circ\geq m-1)
   \TV\!\left(\overline{\mathbb P}^{(m)}_0,
              \overline{\mathbb P}^{(m)}_1\right)
   \leq
   \TV\!\left(\overline{\mathbb P}^{\rm Poi}_0,
              \overline{\mathbb P}^{\rm Poi}_1\right).
$$
Moreover, the Poisson lower-tail bound (Exercise 2.13(b) in \citep{vershynin2018high}) gives
$$
\Prb(N^\circ<m-1) \leq e^{-(1-\log 2)(m-1)} \leq e^{-(m-1)/4}.
$$
Thus \(\Prb(N^\circ\geq m-1)>1/2\) for \(m-1\geq3\), and
$
\TV(\overline{\mathbb P}^{(m)}_0,
              \overline{\mathbb P}^{(m)}_1)<\frac1{40}.
$ For $m=1,2,3$, these are $1$, $1-e^{-2}$, and $1-5e^{-4}$, all of which are above $1/2$.

Conditioning on the two separating events adds at most \(1/80\) on each side, leaving total variation below \(1/20\). Then, by Le Cam's inequality, whenever $m-1\leq c_{\rm inf}M/(q\eta^2\ell)$, there exists at least one chain with failure probability greater than $19/40$. Hence, any estimation satisfying uniform risk at most $\delta$, with $\delta\leq 19/40$, must have $m-1> c_{\rm inf}M/(q\eta^2\ell)$. Since $q=1/R$ and $\eta=\epsilon/20$, Lemma~\ref{lem:checkerboard-mixing-scales} gives $t_{\mathrm{mix},P}\leq4R$, while $\mathcal J_{\infty,\rm env}(P)\leq 32M$ and $\ell<5\log\mathcal J_{\infty,\rm env}(P)$. The identity $400/(4\cdot32\cdot5)=5/8$ then proves Theorem~\ref{thm:rr-support-component}.

\subsection{Combining the two matched subclasses}
\label{subsec:loc-combination}

Under Assumption~\ref{asm:2}, which includes Assumption~\ref{asm:1},
\eqref{eq:rr-confidence-coefficient-range} and
Lemma~\ref{lem:coefficient-validity-check} make every component
of~\eqref{eq:rr-hard-class} an admissible instance of the rare-refresh
family. Hence the generic calculation in Subsection~\ref{subsec:rr-profile} proves the common scale
bounds~\eqref{eq:rr-theorem-target-scale}--\eqref{eq:rr-theorem-J-scale}
throughout \(\mathcal C_{R,D,\epsilon}\).

Uniform risk on this unified class therefore induces both hard subclasses simultaneously. Since
\(0<\delta<1/4<19/40\), the component proofs give
\(m-1\geq R\log\{1/(4\delta)\}/(132000\eta^2)\) and
\(m-1>c_{\rm inf}RM/(\eta^2\ell)\), respectively. Recalling that
$\eta=\xi_0\epsilon=\epsilon/20$, the bound
$t_{\mathrm{mix},P}\leq4R$ from
Lemma~\ref{lem:checkerboard-mixing-scales}, together with
\eqref{eq:rr-theorem-J-scale}, \(M=D^2/32\),
\(\ell<5\log(D^2)\), and the monotonicity of
\(x/\log x\) for \(x>e\), show that these construction-scale bounds
dominate, respectively, the confidence and effective-support terms
in~\eqref{eq:rr-theorem-intrinsic-rate} for every
\(P\in\mathcal C_{R,D,\epsilon}\). Taking their pointwise maximum and then
the supremum proves the result. The confidence refinement follows from
Lemma~\ref{lem:confidence-log-refinement}.

\section{Discussion}

The constructions isolate uniform threshold--accuracy and effective-support bottlenecks. Open problems are to make the latter bottleneck uniform over threshold and accuracy, close the logarithmic gaps with \citep[Theorem~3.5(ii)]{wolfer2026optimisticestimationconvergencemarkov}, address non-stationary initialisations, and determine whether the ergodic-chain $(\xi\epsilon)^{-2}$ gap is intrinsic or an artefact of the upper bound \citep[Theorem~3.5(i)]{wolfer2026optimisticestimationconvergencemarkov}.

%
%$\log_2(1/(\xi(1-\epsilon)))$ and $\log\log_2(1/\xi)$

\bibliography{refs}

\clearpage
\appendix
\thispagestyle{empty}

\section{Proofs of Lemmas and Propositions}

\begin{proof}[Proof of \Cref{lem:delayed-register-kernel}]
For $x=(x_1,\ldots,x_k)\in\mathcal X_k$, summing over its two possible
successors gives
\[
 \sum_{z\in\{-1,+1\}}\frac{1+ax_1z}{2}=1,
\]
so $P^{\mathrm{dr}}_{k,a}$ is a Markov kernel. For
$y=(y_1,\ldots,y_k)$, its only possible predecessors are
$x^{(\sigma)}=(\sigma,y_1,\ldots,y_{k-1})$, $\sigma\in\{-1,+1\}$, and
\[
 \sum_{\sigma\in\{-1,+1\}}P^{\mathrm{dr}}_{k,a}(x^{(\sigma)},y)
 =\sum_{\sigma\in\{-1,+1\}}\frac{1+a\sigma y_k}{2}=1.
\]
Thus $P^{\mathrm{dr}}_{k,a}$ is doubly stochastic and $U_k$ is stationary.
Since $a<1$, each prescribed appended sign has conditional probability at
least $(1-a)/2>0$. Hence any $y$ can be reached from any $x$ in $k$ steps by
successively appending $y_1,\ldots,y_k$, which occurs with probability $[(1-a)/2]^k$, so the chain is irreducible. Finally,
for $\mathbf 1=(1,\ldots,1)$,
\[
 P^{\mathrm{dr}}_{k,a}(\mathbf 1,\mathbf 1)=\frac{1+a}{2}>0,
\]
so the chain is aperiodic. A finite irreducible and aperiodic chain is
uniformly ergodic.
\end{proof}

\begin{proof}[Proof of \Cref{lem:dr-pre-flush-block-formulas}]
Fix an initial word $x=(x_1,\ldots,x_k)$. After $t\leq k$ transitions,
the register has the form
\[
 (x_{t+1},\ldots,x_k,x_1E_1,\ldots,x_tE_t),
\]
where $E_1,\ldots,E_t$ are independent Rademacher variables of mean $a$.
For $t<k$, its law is therefore supported on
\[
 S_{x,t}:=\{y\in\mathcal X_k:(y_1,\ldots,y_{k-t})
       =(x_{t+1},\ldots,x_k)\}.
\]
The final $t$ coordinates are free, so $|S_{x,t}|=2^t$ and
\[
 (P^{\mathrm{dr}}_{k,a})^t(x,S_{x,t})=1,
 \qquad U_k(S_{x,t})=\frac{2^t}{2^k}=2^{-(k-t)}.
\]
The event characterisation of total variation gives
\[
 \left\|(P^{\mathrm{dr}}_{k,a})^t(x,\cdot)-U_k\right\|_{\TV}
 \geq1-2^{-(k-t)}.
\]
This holds for every $x$, so averaging over $x\sim U_k$ yields the first
inequality in~\eqref{eq:register-preflush}, since
\begin{align*}
    \beta_a(t)&=2^{-k}\sum_{x\in\mathcal X_k}
 \left\|(P^{\mathrm{dr}}_{k,a})^t(x,\cdot)-U_k\right\|_{\TV} \geq2^{-k}\sum_{x\in\mathcal X_k}\{1-2^{-(k-t)}\}
 =1-2^{-(k-t)}.
\end{align*}
Since $t<k$ implies $k-t\geq1$, its right-hand side is at least $1/2$.

For the block identity, index the fresh multipliers by coordinate and block.
After $\mathcal B$ complete blocks, the register is
\[
 (x_1W_1,\ldots,x_kW_k),
 \qquad W_i:=\prod_{r=1}^{\mathcal B}E_{i,r},
\]
where all the $E_{i,r}$ are independent Rademacher variables of mean $a$.
The products defining distinct $W_i$ use non-overlapping collections of these
independent multipliers, and
\[
 \E W_i=\prod_{r=1}^{\mathcal B}\E E_{i,r}=a^{\mathcal B}.
\]
Since each $W_i$ is $\{-1,+1\}$-valued, $W_1,\ldots,W_k$ are independent
Rademacher of mean $a^{\mathcal B}$.

Let $T_x(w):=x\odot w$ and let $\mu_{\mathcal B}$ be the law of
$(W_1,\ldots,W_k)$. Since $T_x(T_x(w))=w$, the map $T_x$ is its own inverse
and hence a bijection of $\mathcal X_k$. Therefore, for every
$A\subseteq\mathcal X_k$,
\[
 U_k(T_x^{-1}A)=\frac{|T_x^{-1}A|}{2^k}=\frac{|A|}{2^k}=U_k(A).
\]
Moreover, as $A$ ranges over all subsets of $\mathcal X_k$, so does
$T_x^{-1}A$. The event characterisation of total variation therefore gives
\begin{align*}
    \|\mu_{\mathcal B}\circ T_x^{-1}-U_k\|_{\TV}
 &=\sup_A|\mu_{\mathcal B}(T_x^{-1}A)-U_k(T_x^{-1}A)| =\|\mu_{\mathcal B}-U_k\|_{\TV}.
\end{align*}

Thus we may remove the fixed signs $x_i$. A word with $j$ plus signs then
has probability
\[
 2^{-k}(1+a^{\mathcal B})^j(1-a^{\mathcal B})^{k-j},
\]
and there are $\binom kj$ such words. The resulting distance is independent
of $x$, so its average over $x\sim U_k$ is $\beta_a(\mathcal B k)$. The
finite-space formula for total variation consequently gives
\begin{align*}
 \beta_a(\mathcal B k)
 &=\frac12\sum_{j=0}^k\binom kj
   \left|2^{-k}(1+a^{\mathcal B})^j
   (1-a^{\mathcal B})^{k-j}-2^{-k}\right| =B_k(a^{\mathcal B}),
\end{align*}
which proves~\eqref{eq:register-block-identity}.
\end{proof}

\begin{proof}[Proof of \Cref{prop:finite-binomial-inverse}]
Use the notation of \Cref{lem:h-equivalence}.  That lemma gives the
binomial upper-tail representation~\eqref{eq:Bk-binomial-tail}.  By
\Cref{lem:binomial-equivalent-derivative}, $B_k$ is locally absolutely
continuous and~\eqref{eq:Bk-derivative} holds almost everywhere.  The goal
is to obtain a uniform lower bound of order $\sqrt{k}$
for \(B_k'(a)\) on a range containing \(B_k^{-1}([0,3/8])\).  Integrating
that derivative bound will then control increments of \(B_k^{-1}\).

We next show that the inverse images of \([0,3/8]\) occur where \(a\) is of
order \(1/\sqrt{k}\).  Take \(C=3\) and set \(a=C/\sqrt{k}\).  Under
\(P_a\),
\[
 \E_aS=ka,
 \qquad
 \operatorname{Var}_a(S)=k(1-a^2).
\]
Cantelli's inequality therefore gives
\begin{align*}
 P_a(S\leq0)
 &=P_a(S-ka\leq-ka) \leq\frac{k(1-a^2)}{k(1-a^2)+k^2a^2} =\frac{1-a^2}{1+(k-1)a^2}.
\end{align*}
Substituting \(a=C/\sqrt{k}\) yields
\[
 P_a(S\leq0)
 \leq
 \frac{1-C^2/k}{1+C^2(1-1/k)}
 <\frac{1}{1+C^2}=\frac1{10}.
\]
Hence \(P_a(S>0)>9/10\), whereas symmetry gives
\(P_0(S>0)\leq1/2\).  Since total variation dominates the difference in
probabilities of any fixed event,
\[
 B_k(C/\sqrt{k})
 \geq P_a(S>0)-P_0(S>0)
 >\frac9{10}-\frac12
 =\frac25>\frac38.
\]
Together with \(B_k(0)=0\), this shows that every generalised inverse of a
level in \([0,3/8]\) lies in \([0,C/\sqrt{k}]\), once \(k>C^2\).  The
argument below will also prove strict increase on this interval, after
which these generalised inverses are ordinary inverses.

It remains to lower-bound the binomial atom in
\eqref{eq:Bk-derivative}.  Fix \(0\leq a\leq C/\sqrt{k}\).  Since
\(r_k(a)\) is the first point in a lattice of spacing two strictly above
\(kh(a)\), write
\[
 r_k(a)=kh(a)+u,
 \qquad 0<u\leq2.
\]
Put
\[
 n=k-1,
 \qquad p=\frac{1+a}{2},
 \qquad j=j_k(a)-1.
\]
For $k>C^2$, the bounds on $h$ give $0\leq h(a)<1$.  Hence
$0<r_k(a)\leq k$, so $1\leq j_k(a)\leq k$ and
$j\in\{0,\ldots,n\}$, as required in \Cref{lem:binomial-atom}.
The mean of the binomial variable in \eqref{eq:Bk-derivative} is \(np\).
Its displacement from the atom index satisfies the exact identity
\begin{align}
 2(j-np)
 &=2\left\{j_k(a)-1-(k-1)\frac{1+a}{2}\right\}\notag=k+r_k(a)-2-(k-1)(1+a)\notag\\
 &=r_k(a)-1-(k-1)a\notag\\
 &=k\{h(a)-a\}+(u-1+a).
 \label{eq:derivative-atom-displacement}
\end{align}
The previously proved bounds \(a/2\leq h(a)<a\) imply
\[
 0\leq a-h(a)\leq\frac a2.
\]
Also, \(0<u\leq2\) and, for \(k\geq C^2\),
\(0\leq a\leq C/\sqrt{k}\leq1\), so \(|u-1+a|\leq2\).  Taking absolute
values in \eqref{eq:derivative-atom-displacement} gives
\begin{align*}
 2|j-np|
 &\leq k|h(a)-a|+|u-1+a| \leq\frac{ka}{2}+2 \leq\left(\frac C2+2\right)\sqrt{k}.
\end{align*}
Consequently,
\begin{equation}
 |j-np|
 \leq\left(\frac C4+1\right)\sqrt{k}.
 \label{eq:derivative-atom-near-mean}
\end{equation}

Lemma~\ref{lem:binomial-atom} is stated using \(\sqrt n=\sqrt{k-1}\).
For \(k\geq2\),
\[
 \sqrt{k}\leq\sqrt2\sqrt{k-1}=\sqrt2\sqrt n,
\]
so \eqref{eq:derivative-atom-near-mean} implies
\[
 |j-np|\leq A\sqrt n,
 \qquad
 A:=\sqrt2\left(\frac C4+1\right)=\frac{7\sqrt2}{4}.
\]
In addition, for all sufficiently large \(k\),
\(a\leq C/\sqrt{k}\leq1/3\), and hence
\[
 p=\frac{1+a}{2}\in[1/2,2/3]\subseteq[1/3,2/3].
\]
For this value of $A$, the proof of \Cref{lem:binomial-atom} permits
the choices
\[
 n_A=882,
 \qquad
 c_A=\sqrt{\frac2\pi}
      \exp\left\{-\frac23-\frac83A^2\right\}
     =\sqrt{\frac2\pi}e^{-17},
\]
where the last equality uses $A^2=49/8$.  Thus all hypotheses of that
lemma hold uniformly for $k\geq883$, and
\[
 \Prob\left\{
 \operatorname{Bin}\left(k-1,\frac{1+a}{2}\right)=j_k(a)-1
 \right\}
 \geq\frac{c_A}{\sqrt{k-1}}
 \geq\frac{c_A}{\sqrt{k}}.
\]
Substituting this estimate into \eqref{eq:Bk-derivative} gives, with
$\gamma=c_A/2>0$,
\begin{equation}
 B_k'(a)\geq\gamma\sqrt{k}
 \label{eq:Bk-derivative-lower}
\end{equation}
for almost every \(a\in[0,C/\sqrt{k}]\).

Because \(B_k\) is absolutely continuous, the Fundamental Theorem of
Calculus for absolutely continuous functions and
\eqref{eq:Bk-derivative-lower} imply that, for
\(0\leq x<y\leq C/\sqrt{k}\),
\begin{align*}
 B_k(y)-B_k(x)
 &=\int_x^y B_k'(a)\,da\\
 &\geq\gamma\sqrt{k}(y-x)>0.
\end{align*}
Thus \(B_k\) is strictly increasing on \([0,C/\sqrt{k}]\).  Together with
continuity, \(B_k(0)=0\), and \(B_k(C/\sqrt{k})>3/8\), this proves that
the generalised inverse is an ordinary inverse on the relevant range and
that
\[
 B_k(B_k^{-1}(b))=b,
 \qquad 0\leq b\leq3/8.
\]

Finally, let \(0\leq b_0<b_1\leq3/8\) and put
\(\alpha_i=B_k^{-1}(b_i)\).  Both inverse values lie in
\([0,C/\sqrt{k}]\), so the preceding integrated derivative bound gives
\[
 b_1-b_0
 =B_k(\alpha_1)-B_k(\alpha_0)
 \geq\gamma\sqrt{k}(\alpha_1-\alpha_0).
\]
Rearranging and taking
$L=\gamma^{-1}=2/c_A=\sqrt{2\pi}e^{17}$ proves
\eqref{eq:Bk-inverse-lipschitz}.  Since strict increase and \(B_k(0)=0\)
give \(B_k^{-1}(0)=0\), setting \(b_0=0\) proves
\eqref{eq:Bk-inverse-size} and completes the proof.
\end{proof}

\begin{proof}[Proof of \Cref{lem:delayed-register-mixing-scales}]
Fix a delayed-register kernel and write \(u=jk+s\), where
\(j\in\mathbb Z_+\) and \(0\leq s<k\). If
\(\rho_s(x):=(x_{s+1},\ldots,x_k,x_1,\ldots,x_s)\), then the register
at time \(u\), started from \(x\), has the form
\(\rho_s(x)\odot W_{j,s}\), where \(\odot\) denotes coordinatewise
multiplication and the law of \(W_{j,s}\) does not depend on \(x\).
The map \(z\mapsto\rho_s(x)\odot z\) is a bijection of
\(\{-1,+1\}^k\) preserving \(U_k\). Hence the distance from \(U_k\)
is the same for every initial word, and therefore
\begin{equation}
 \max_x\left\|(P_{k,a}^{\mathrm{dr}})^u(x,\cdot)-U_k\right\|_{\TV}
 =\beta_a(u).
 \label{eq:register-worst-average-equality}
\end{equation}
Before \(k\), \eqref{eq:register-preflush} keeps this common distance
above \(1/2\), so $\tmix\geq k$. At time \(k\), the chain $P_{0}^{\mathrm{dr}}$ crosses $1/4$, since $a_0=B_k^{-1}\{\xi(1-\epsilon)\}$, which by its crossing-time definition \eqref{eq:Bk-inverse}, implies that
\(\beta_{a_0}(k)=\xi(1-\epsilon)\leq1/4\) (since $\xi\leq 1/4$ and $\epsilon>0$). For the other chain, $P_1^{\mathrm{dr}}$,
\(\beta_{a_1}(2k)\leq\xi(1-\epsilon)\leq1/4\). This proves the
claimed worst-case mixing-time bounds.

The worst-case and average mixing times agree at a common
threshold for the delayed register chain, as shown by \eqref{eq:register-worst-average-equality}.
The calculations following
\eqref{eq:register-block-identity} give
\(t_{\mathrm{mix},P_0^{\mathrm{dr}}}^{\sharp}(\xi)=k\) and
\(k<t_{\mathrm{mix},P_1^{\mathrm{dr}}}^{\sharp}(\xi)\leq2k\).
Since \(\xi\leq1/4\), threshold monotonicity and the preceding
worst-case bounds give the result.
\end{proof}

\begin{proof}[Proof of \Cref{lem:checkerboard-kernel}]

    Every entry of \(K_\theta\) is positive. In each checkerboard, the two signed perturbations in a paired row cancel, as do those in a paired column. Thus \(K_\theta\) has every row and column sum equal to one. Inspection of the two block rows and block columns in~\eqref{eq:rr-kernel} then shows that \(P_\theta\) is doubly stochastic. Its stationary law is therefore uniform. Moreover, one refresh already reaches any state in the opposite region and two refreshes reach any state in the initial region. Since every state also has a positive self-loop, \(P_\theta\) is uniformly ergodic.
\end{proof}

\begin{proof}[Proof of \Cref{lem:common-average-mixing-coefficient}]

Recall the rare-refresh checkerboard transition matrix $P_\theta$ in \eqref{eq:rr-kernel} with left-to-right kernel $K_\theta$ from \eqref{eq:rr-payload-block}. To isolate the parameter-bearing part, write
\begin{equation}
 C_\theta:=K_\theta-U_d,
 \qquad
 S_\theta:=\begin{pmatrix}0&K_\theta\\U_d&0\end{pmatrix}.
 \label{eq:rr-C-S}
\end{equation}
The matrix \(C_\theta\) has zero row and column sums, so
\begin{equation}
 C_\theta U_d=U_dC_\theta=0,
 \qquad
 K_\theta U_d=U_dK_\theta=U_d.
 \label{eq:rr-annihilation}
\end{equation}
Block multiplication gives
\begin{equation}
 S_\theta^2=\operatorname{diag}(U_d,U_d).
 \label{eq:rr-two-refreshes}
\end{equation}
Because the identity commutes with \(S_\theta\), the binomial theorem applied to \(P_\theta=(1-q)I_D+qS_\theta\) gives
\[
 P_\theta^t
 =\sum_{j=0}^t\binom tj(1-q)^{t-j}q^jS_\theta^j.
\]
Thus every term with \(j\geq2\) is parameter-free. Let \(P_{\boldsymbol 0}\) be the same rare-refresh chain with every block parameter set to zero. The term with \(j=1\) has weight \(w_t\), and hence
\begin{equation}
 P_\theta^t=P_{\boldsymbol 0}^t+w_t
 \begin{pmatrix}0&C_\theta\\0&0\end{pmatrix}.
 \label{eq:rr-parameter-term}
\end{equation}
Equation~\eqref{eq:rr-parameter-term} describes \(P_\theta^t(x,\cdot)\) after all intermediate state labels have been marginalised. A fully observed trajectory retains those labels and can contain several informative left-to-right refreshes.

Now subtract stationarity before taking absolute values. In the coefficient-zero chain, the only positive deviation from stationarity is at the initial state. All deviations in a probability row sum to zero, so the raw \(\ell^1\)-distance is \(2b_t\) and the row total variation is \(b_t\). In a left-starting row, a block of amplitude \(v\) produces the paired deviations
\begin{equation}
 \frac{w_tv-p_t/2}{d},
 \qquad
 \frac{-w_tv-p_t/2}{d}.
 \label{eq:rr-block-deviations}
\end{equation}
Combining these signed deviations before taking absolute values gives the increment \(g_t(v)/d\). Every right-starting row remains at \(b_t\). Stationary averaging over the two source rows associated with each block gives the factor \(4/D^2\), while maximising the nonnegative increments along a left source row gives the factor \(2/D\).
Summing these block contributions proves~\eqref{eq:rr-average-profile}.
\end{proof}

As a uniform consequence, $0\leq g_t(v)\leq w_t$ for $|v|<1$. Since there are $D^2/16$ blocks in the stationary average,
\begin{equation}
 b_t\leq\beta_\theta(t)\leq b_t+\frac{w_t}{4}.
 \label{eq:rr-profile-envelope}
\end{equation}

\begin{proof}[Proof of \Cref{lem:checkerboard-mixing-scales}]
For a rare-refresh checkerboard kernel \(P=P_\theta\), with
\(\theta\in(-1,1)^{2M}\), write
\(d_\theta(t):=\max_x\|P_\theta^t(x,\cdot)-\pi\|_{\TV}\). The row
calculation in Lemma~\ref{lem:common-average-mixing-coefficient} gives
\(b_t\leq d_\theta(t)\leq b_t+w_t/2\). At \(t=R/2\),
\(r_t\geq e^{-50/99}\); at \(t=4R\),
\(r_t\leq e^{-4}\), \(p_t\leq e^{-8}\), and
\(w_t\leq4e^{-3.99}\). Hence the envelope gives
\begin{align*}
    d_\theta(R/2)&\geq\frac{31}{32}e^{-50/99}>\frac14,\\
    d_\theta(4R)&\leq e^{-4}+\frac{e^{-8}}{64}+2e^{-3.99}<\frac14.
\end{align*}

Total-variation contraction proves the worst-case bounds.

The calculation proving \eqref{eq:rr-theorem-target-scale} gives \(2R<t_{\mathrm{mix},P}^{\sharp}(1/20)\leq10R\). Moreover,
\(\beta_P(t)\leq d_{\rm max}(t)\) and the submultiplicative
worst-case bound give
\begin{align*}
    t_{\mathrm{mix},P}^{\sharp}(1/20) &\leq t_{\mathrm{mix},P}(1/20) \leq\left\lceil\log_2(20)\right\rceil t_{\mathrm{mix},P}(1/4)\\
    &=5t_{\mathrm{mix},P}(1/4).
\end{align*}
Combining these inequalities with the worst-case bounds above proves the result.
\end{proof}

\begin{proof}[Proof of \Cref{lem:rr-confidence-calibration}]
Lemma~\ref{lem:rr-confidence-lag-bounds} gives $0<p<0.0014$, $w>0.117$ and $b\in (0.0344875,0.037)$.
Define the desired threshold levels \(L_0:=\xi_0-\eta\) and
\(L_1:=\xi_0+21\eta/20\), and set
\begin{equation}
 \begin{aligned}
 c_0&=\frac{4(\xi_0-\eta-b)+p/2}{w},\\
 c_1&=\frac{4(\xi_0+21\eta/20-b)+p/2}{w}.
 \end{aligned}
 \label{eq:rr-confidence-coefficients}
\end{equation}
Since \(\eta=\xi_0\epsilon=\epsilon/20\), and \(\epsilon\leq1/4\),
\[
 L_0=\xi_0(1-\epsilon)\geq0.0375>b.
\]
Equation~\eqref{eq:rr-confidence-coefficients} therefore gives
\(c_0-p/(2w)=4(L_0-b)/w>0\). Therefore, $g_{s_{0,1}}(c_0)>0$. Subtracting the two definitions in
\eqref{eq:rr-confidence-coefficients} gives
\[
 c_1-c_0=\frac{41\eta}{5w}>0,
\]
so $c_1>c_0>0$ also satisfies $g_{s_{0,1}}(c_1)>0$. Also,
\(L_1\leq0.063125\), and~\eqref{eq:rr-confidence-lag-bounds} gives
\[
 \begin{aligned}
 4(L_1-b)+\frac p2
 &<4(0.063125-0.0344875)+0.0007\\
 &=0.11525<0.99(0.117)<0.99w.
 \end{aligned}
\]
Thus \(c_1<0.99\), while \(w>0.117\) gives
\[
 c_1-c_0<\frac{41}{5(0.117)}\eta<71\eta.
\]
Since \(p>0\), we have \(0<c_0<c_1<0.99\), so both coefficients are admissible (i.e. lie in $(-1,1)$, as assumed). Hinge positivity (i.e., $wc_i>p/2$ for $i\in\{0,1\}$) gives
\(g_{s_{0,1}}(c_i)=wc_i-p/2\). Substituting
\eqref{eq:rr-confidence-coefficients} into~\eqref{eq:rr-common-profile} therefore yields~\eqref{eq:rr-confidence-profile-values}. The second value is strictly larger than \(\xi_0+\eta\), so monotonicity of $\beta_{c}(t)$ yields the stated crossing-time ordering and disjoint output sets.
\end{proof}

\begin{proof}[Proof of \Cref{lem:loc-hidden-coefficient-support}]
The accuracy condition, the bound $p\geq e^{-320/49}$ from
Lemma~\ref{lem:rr-localised-lag-bounds}, and
$\gamma_\ell\geq a_\star/\ell$ give
$\eta\leq c_{\rm acc}/\ell\leq p\gamma_\ell/128$, and hence
$$
h=32\eta/(w\gamma_\ell)\leq p/(4w)=\tau/2.
$$(Recall that the universal constants $\gamma_\ell,~a_\star$ and $c_{\rm acc}$ are defined in Appendix \ref{app:rr-constants}.) The same lemma gives
$0<\tau<0.00658$. Since $U=\tau+hZ$, $|Z|\leq1$ and $h\leq \tau/2$,
\[
 \frac{\tau}{2}\leq\tau-h\leq U\leq\tau+h
 \leq\frac{3\tau}{2}<\frac{3}{200}.
\]
This proves the claimed support inclusion for both translated priors.
\end{proof}

\begin{proof}[Proof of \Cref{lem:coefficient-validity-check}]
Since \(g_{s_{0,2}}(\tau+hZ)=whZ_+\) and \(0\leq Z_+\leq1\),
\[
 0\leq\frac{\bar g_0+\bar g_1}{2}\leq wh
 \leq\frac{w\tau}{2}=\frac p4.
\]
Lemma~\ref{lem:rr-localised-lag-bounds} then shows that~\eqref{eq:loc-control-calibration} has a solution on the positive linear branch satisfying \(0.556<c<0.753\). Together with Lemma~\ref{lem:loc-hidden-coefficient-support}, this shows that the coefficients do lie within the constructed classes ($\mathcal{U}_i$).
\end{proof}

\begin{proof}[Proof of \Cref{lem:loc-profile-concentration}]

Under prior $\mu_i$, the random variables $g_{s_{0,2}}(U_e)$ are independent, $[0,wh]$-valued, and have mean $\bar g_i$. Therefore, the (one-sided) Hoeffding's inequality gives,

\begin{align*}
   \Prb_{\mu_0^{\otimes M}}(\mathcal{U}_0^c)
   &=
   \Prb_{\mu_0^{\otimes M}}\left\{
   \frac1M\sum_{e=1}^M g_{s_{0,2}}(U_e)-\bar g_0>8\eta
   \right\}\\
   &\leq
   \exp\left\{-\frac{2M(8\eta)^2}{(wh)^2}\right\},
\end{align*}
and
\begin{align*}
   \Prb_{\mu_1^{\otimes M}}(\mathcal{U}_1^c) &= \Prb_{\mu_1^{\otimes M}}\left\{
   \frac1M\sum_{e=1}^M g_{s_{0,2}}(U_e)-\bar g_1\leq-8\eta
   \right\}\\
   &\leq
   \exp\left\{-\frac{2M(8\eta)^2}{(wh)^2}\right\}.
\end{align*}
Then, inserting \eqref{eq:loc-translated-prior}, and recalling the constant bounds in \Cref{asm:2} and \eqref{eq:loc-unit-hinge-gap},  yields \(2M(8\eta)^2/(wh)^2=M\gamma_\ell^2/8\geq\log80\). Hence \(\Prb_{\mu_i^{\otimes M}}(\mathcal{U}_i^c)\leq1/80\) for both \(i\in\{0,1\}\).

For \(u\in\mathcal U_0\), equations~\eqref{eq:loc-split-profile} and~\eqref{eq:loc-control-calibration} give
\[
 \beta_{(c,u)}(s_{0,2})
 \leq \E_0\beta_{(c,\boldsymbol U)}(s_{0,2})+\eta
 =\xi_0-\eta=\frac{1-\epsilon}{20}.
\]
Similarly, every \(u\in\mathcal U_1\) satisfies
\[
 \beta_{(c,u)}(s_{0,2})
 >\E_1\beta_{(c,\boldsymbol U)}(s_{0,2})-\eta
 =\xi_0+\eta=\frac{1+\epsilon}{20}.
\]

\end{proof}

\section{Supporting Lemmas}

\begin{lemma}
\label{lem:confidence-log-refinement}
$\log\frac1{4\delta}\geq\frac13\log\frac4\delta\quad\forall \quad0<\delta\leq\frac{1}{16}$.
\end{lemma}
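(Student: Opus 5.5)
The inequality $\log\frac{1}{4\delta}\geq\frac13\log\frac4\delta$ is elementary. The plan is to put it in terms of the single variable $x:=\log\frac1\delta$. On the range $0<\delta\leq1/16$ we have $x\geq\log 16=4\log 2$. The two sides become
\[
\log\frac1{4\delta}=x-2\log 2,
\qquad
\frac13\log\frac4\delta=\frac13\left(x+2\log 2\right).
\]
The claim is therefore equivalent to
\[
x-2\log2\geq\frac{x}{3}+\frac{2\log2}{3},
\]
that is, $\frac{2x}{3}\geq\frac{8\log 2}{3}$, or $x\geq4\log2$.

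This is exactly the hypothesis $\delta\leq1/16$, so the inequality holds on the whole range. Equality is attained at $\delta=1/16$, which shows the threshold cannot be relaxed with the constant $\frac13$. Both sides are affine in $x$, with slopes $1$ and $\frac13$, so the difference is increasing in $x$ and the inequality persists for all smaller $\delta$.

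The argument has no genuine obstacle. The only points needing care are that the logarithm is natural, which does not matter since the inequality is linear in $\log$, and that the direction of the reduction is correctly an equivalence, so that $x\geq4\log 2$ does imply the claim.
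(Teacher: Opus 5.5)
Your proof is correct and is essentially the paper's argument: the paper computes $3\log\frac{1}{4\delta}-\log\frac{4}{\delta}=\log\frac{1}{256\delta^2}=2\log\frac{1}{16\delta}\geq 0$, which is the same linear reduction to $\delta\leq 1/16$ that you reach via the substitution $x=\log(1/\delta)$. Your extra remark that equality holds at $\delta=1/16$ is also correct.
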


\begin{proof}
Since $\log x > 0$ whenever $x>1$, we have
\[
 3\log\frac1{4\delta}-\log\frac4\delta
 =\log\frac{1}{256\delta^2}=2\log\frac1{16\delta}\geq0.
\]
\end{proof}

\begin{lemma}[Binomial upper-tail representation of total variation]
\label{lem:h-equivalence}
Fix $k\in\mathbb N$.  For $0\leq a<1$, let $P_a$ be the product law of
$k$ Rademacher variables $Z_1,\ldots,Z_k$ of mean $a$, let
$P_0=U_k$, and put
\[
 S=\sum_{i=1}^k Z_i,
 \qquad X=\frac{k+S}{2}.
\]
For $0<a<1$, define
\[
 h(a):=\frac{-\log(1-a^2)}{\log((1+a)/(1-a))},
\]
and set $h(0):=0$.  Then $h$ is continuous and strictly increasing on
$[0,1)$, and
\[
 \frac a2\leq h(a)<a,
 \qquad 0<a<1.
\]
For $0\leq a<1$, let $r_k(a)$ be the smallest point of
$\{-k,-k+2,\ldots,k\}$ strictly larger than $kh(a)$, and put
$j_k(a)=(k+r_k(a))/2$.  For $0<a<1$,
\[
 \left\{\frac{dP_a}{dP_0}>1\right\}
 =\{S>kh(a)\}
 =\{S\geq r_k(a)\}
 =\{X\geq j_k(a)\}.
\]
Moreover, for $0\leq a<1$, with $p=(1+a)/2$,
\begin{equation}
 \begin{aligned}
 B_k(a)
 &=\|P_a-P_0\|_{\TV}\\
 &=\Prob\{\operatorname{Bin}(k,p)\geq j_k(a)\}
   -\Prob\{\operatorname{Bin}(k,1/2)\geq j_k(a)\}.
 \end{aligned}
 \label{eq:Bk-binomial-tail}
\end{equation}
\end{lemma}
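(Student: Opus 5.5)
The plan is to reduce the whole lemma to the likelihood ratio of $P_a$ against $P_0$. Since both are product laws on $\{-1,+1\}^k$, the ratio depends on $z$ only through $S=\sum_i z_i$:
\[
 \frac{dP_a}{dP_0}(z)=\prod_{i=1}^k(1+az_i)=(1+a)^{(k+S)/2}(1-a)^{(k-S)/2}.
\]
Taking logarithms, the condition $dP_a/dP_0>1$ reads
\[
 \tfrac{k+S}{2}\log(1+a)+\tfrac{k-S}{2}\log(1-a)>0 .
\]
Solving for $S$ gives $S\log\frac{1+a}{1-a}>-k\log(1-a^2)$, that is, $S>kh(a)$. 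We divide by a positive quantity here, since $\log\frac{1+a}{1-a}>0$ for $0<a<1$.

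Because $S$ takes values only in the lattice $\{-k,-k+2,\ldots,k\}$, the event $\{S>kh(a)\}$ equals $\{S\ge r_k(a)\}$. Translating via $X=(k+S)/2$ gives $\{X\ge j_k(a)\}$. If $kh(a)\ge k$ no lattice point exists, but the bound $h<a<1$ proved below rules this out.

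For the total variation, I use the identity $\|P-Q\|_{\TV}=P(A)-Q(A)$ with $A=\{dP/dQ>1\}$. Under $P_a$, $X\sim\mathrm{Bin}(k,(1+a)/2)$, and under $P_0$, $X\sim\mathrm{Bin}(k,1/2)$. This yields the second equality in~\eqref{eq:Bk-binomial-tail}. For the first equality, I would check directly that $\frac12\sum_z|P_a(z)-P_0(z)|$ equals $B_k(a)$ as defined in~\eqref{eq:Bk-definition}: group words by their number $j$ of plus signs, each such word having $P_a$-mass $2^{-k}(1+a)^j(1-a)^{k-j}$. At $a=0$ both sides vanish, taking $h(0)=0$ so that $r_k(0)$ is well defined.

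The properties of $h$ are the main analytic obstacle. Continuity on $(0,1)$ is clear. At $0$, expand numerator and denominator: $-\log(1-a^2)=a^2+a^4/2+\cdots$ and $\log\frac{1+a}{1-a}=2a+2a^3/3+\cdots$. Hence $h(a)\sim a/2\to0$.

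For the bound $h(a)<a$, write $-\log(1-a^2)=\sum_n a^{2n}/n$ and $a\log\frac{1+a}{1-a}=\sum_n 2a^{2n}/(2n-1)$, and compare term by term: $1/n<2/(2n-1)$. For $h(a)\ge a/2$, compare $\sum_n a^{2n}/n$ with $\sum_n a^{2n}/(2n-1)$ term by term, using $1/n\ge 1/(2n-1)$.

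Strict monotonicity is the delicate point. I would write $h(a)=\frac{\phi(a)}{\psi(a)}$ with $\phi=-\log(1-a^2)$ and $\psi=\log\frac{1+a}{1-a}$, and show $\phi'\psi-\phi\psi'>0$. Here $\phi'=2a/(1-a^2)$ and $\psi'=2/(1-a^2)$, so the condition reduces to $a\psi(a)>\phi(a)$, which is exactly the inequality $h(a)<a$ already established. Monotonicity therefore follows from the same series comparison. Continuity then extends to the whole of $[0,1)$.
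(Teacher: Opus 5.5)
Your proof is correct. Its skeleton matches the paper's. The likelihood ratio depends only on $S$; dividing by $\log\frac{1+a}{1-a}>0$ gives $\{S>kh(a)\}$; you pass to the lattice; and total variation is read off on the Scheff\'e set $\{dP_a/dP_0>1\}$. Your monotonicity step is also the paper's reduction: since $\phi'=a\psi'$, your $\phi'\psi-\phi\psi'=\psi'(a)\{a\psi(a)-\phi(a)\}$ is the paper's formula $h'(a)=w(a)\{a-h(a)\}\big/\int_0^a w(u)\,du$, with $w=\psi'$.

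You genuinely differ in proving $a/2\le h(a)<a$. The paper writes $h(a)=\int_0^a u\,w(u)\,du\big/\int_0^a w(u)\,du$ with the increasing weight $w(u)=2/(1-u^2)$. It gets $h<a$ because $u<a$ on the interval. It gets $h\ge a/2$ by splitting at $a/2$ and reflecting $u\mapsto a-u$ to compare $w(a-u)$ with $w(u)$.

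Your coefficientwise comparison of $\sum_n a^{2n}/n$ with $\sum_n 2a^{2n}/(2n-1)$ and with $\sum_n a^{2n}/(2n-1)$ is shorter and needs no change of variables. It also gives, at no extra cost, the strict bound $h(a)>a/2$ and the asymptotic $h(a)\sim a/2$ you use for continuity at $0$.

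The paper's weighted-mean representation is longer but explains why the bounds hold: a running mean under an increasing weight lies between the midpoint and the endpoint. That argument would also apply to similar ratios of integrals with no convenient series.

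You also make explicit something the paper leaves implicit in this lemma: $r_k(a)$ is well defined because $kh(a)<k$.
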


\begin{proof}
If a realisation has sum $S$, then it contains $(k+S)/2$ plus signs
and $(k-S)/2$ minus signs.  Independence therefore gives
\[
 \frac{dP_a}{dP_0}
 =(1+a)^{(k+S)/2}(1-a)^{(k-S)/2}.
\]
Taking logarithms and collecting the terms involving $k$ and $S$
yields
\[
 \log\frac{dP_a}{dP_0}
 =\frac{k}{2}\log(1-a^2)
  +\frac S2\log\frac{1+a}{1-a}.
\]

We next derive the integral representation for $h$ used
below.  Direct differentiation gives
\begin{align*}
 \frac{d}{du}\{-\log(1-u^2)\}
 &=-\frac{-2u}{1-u^2}
 =\frac{2u}{1-u^2},\\
 \frac{d}{du}\log\frac{1+u}{1-u}
 &=\frac{d}{du}\{\log(1+u)-\log(1-u)\}\\
 &=\frac1{1+u}+\frac1{1-u}
 =\frac{2}{1-u^2}.
\end{align*}
Both logarithmic expressions vanish at \(u=0\).  The Fundamental Theorem
of Calculus therefore gives
\[
 -\log(1-a^2)=\int_0^a\frac{2u}{1-u^2}\,du,
 \quad
 \log\frac{1+a}{1-a}=\int_0^a\frac{2}{1-u^2}\,du,
\]
and hence
\[
 h(a)=
 \frac{\displaystyle\int_0^a u\,w(u)\,du}
      {\displaystyle\int_0^a w(u)\,du},
 \qquad
 w(u):=\frac{2}{1-u^2}.
\]
Thus \(h(a)\) is the weighted average of \(u\) over \([0,a]\), with
positive increasing weight \(w\).  Since \(u\leq a\) and \(w(u)>0\),
\[
 u w(u)\leq a w(u),
\]
with strict inequality for \(0\leq u<a\).  Therefore
\[
 \int_0^a u w(u)\,du
 <a\int_0^a w(u)\,du.
\]
Dividing by the positive total weight \(\int_0^a w(u)\,du\) proves that
\(h(a)<a\).

We next prove that this weighted average is at least the midpoint \(a/2\).
Since \(\int_0^a w(u)\,du>0\), the desired inequality is equivalent to
\begin{align*}
 h(a)\geq\frac a2
 &\quad\Longleftrightarrow\quad
 \int_0^a u w(u)\,du
 \geq\frac a2\int_0^a w(u)\,du\\
 &\quad\Longleftrightarrow\quad
 \int_0^a\left(u-\frac a2\right)w(u)\,du\geq0.
\end{align*}
Thus it remains to prove that the last centred integral is nonnegative.
Split it at \(a/2\):
\begin{align*}
 I
 &:=
 \int_0^a\left(u-\frac a2\right)w(u)\,du=
 \int_0^{a/2}\left(u-\frac a2\right)w(u)\,du
 +\int_{a/2}^a\left(u-\frac a2\right)w(u)\,du.
\end{align*}
The first integral is nonpositive and the second is nonnegative.  To compare
their magnitudes, make the change of variables \(v=a-u\) in the second
integral.  Then \(u=a-v\), \(du=-dv\), and the transformed endpoints are
\(v=a/2\) when \(u=a/2\) and \(v=0\) when \(u=a\).  Hence
\begin{align*}
 \int_{a/2}^a\left(u-\frac a2\right)w(u)\,du
 &=
 -\int_{a/2}^{0}\left(\frac a2-v\right)w(a-v)\,dv\\
 &=
 \int_0^{a/2}\left(\frac a2-v\right)w(a-v)\,dv\\
 &=
 \int_0^{a/2}\left(\frac a2-u\right)w(a-u)\,du.
\end{align*}
The second equality uses
\(\int_b^c f(v)\,dv=-\int_c^b f(v)\,dv\): the minus sign from
\(du=-dv\) cancels the minus sign introduced by reversing the endpoints.
The final equality merely renames the dummy variable \(v\) as \(u\).
Since
\[
 \int_0^{a/2}\left(u-\frac a2\right)w(u)\,du
 =
 -\int_0^{a/2}\left(\frac a2-u\right)w(u)\,du,
\]
the two halves combine to give
\begin{align*}
 I
 &=
 \int_0^{a/2}\left(\frac a2-u\right)
       \{w(a-u)-w(u)\}\,du\geq0,
\end{align*}
because \(a-u\geq u\) on \([0,a/2]\) and \(w\) is increasing.  By the
equivalence above, \(I\geq0\) proves \(h(a)\geq a/2\).

Finally, write
\[
 N(a):=\int_0^a u w(u)\,du,
 \qquad
 D(a):=\int_0^a w(u)\,du,
\]
so that \(h(a)=N(a)/D(a)\).  The variable \(a\) appears in these
integrals through their upper endpoints.  The Fundamental Theorem of
Calculus therefore gives
\[
 N'(a)=a w(a),
 \qquad
 D'(a)=w(a).
\]
Since \(D(a)>0\), the quotient rule yields
\begin{align*}
 h'(a)
 &=
 \frac{N'(a)D(a)-N(a)D'(a)}{D(a)^2} =
 \frac{a w(a)D(a)-N(a)w(a)}{D(a)^2}\\
 &=
 \frac{w(a)}{D(a)}
 \left\{a-\frac{N(a)}{D(a)}\right\} =
 \frac{w(a)\{a-h(a)\}}{\int_0^a w(u)\,du}.
\end{align*}
Here \(w(a)>0\), \(D(a)>0\), and the previously proved inequality
\(h(a)<a\) gives \(a-h(a)>0\).  Consequently \(h'(a)>0\) for \(a>0\).
The defining quotient is continuous on $(0,1)$, while the bounds above
give $h(a)\to0=h(0)$ as $a\downarrow0$.  Thus $h$ is continuous on
$[0,1)$.  Together with $h'(a)>0$ for $a>0$ and
$h(a)>0=h(0)$, this proves that $h$ is strictly increasing and that
\[
 \frac a2\leq h(a)<a,
 \qquad 0<a<1.
\]

The definition of \(h\) makes its likelihood-ratio role explicit.  For
\(a>0\), the preceding formula for the log likelihood ratio can be
rewritten as
\[
 \log\frac{dP_a}{dP_0}
 =
 \frac12\log\frac{1+a}{1-a}\{S-kh(a)\}.
\]
Indeed, after multiplying out, the definition
\[
 h(a)=\frac{-\log(1-a^2)}{\log((1+a)/(1-a))}
\]
turns the term involving \(h(a)\) into
\((k/2)\log(1-a^2)\).  The factor outside the braces is positive, so
\[
 \frac{dP_a}{dP_0}>1
 \quad\Longleftrightarrow\quad
 S>kh(a).
\]
Since $S$ takes values in the lattice $\{-k,-k+2,\ldots,k\}$, the
definitions of $r_k(a)$, $j_k(a)$, and $X$ give
\[
 \{S>kh(a)\}=\{S\geq r_k(a)\}=\{X\geq j_k(a)\}.
\]
Thus the event on which the likelihood ratio exceeds one is a binomial
upper-tail event, namely $\{X\geq j_k(a)\}$.

For two probability mass functions $p$ and $q$ on a finite set,
\[
 \|p-q\|_{\TV}
 =\sum_{x:p(x)>q(x)}\{p(x)-q(x)\}.
\]
Because $P_0$ has full support, the event on which the likelihood ratio
exceeds one collects exactly the positive terms in the signed mass difference
and realises total variation.  Since the definition of $B_k$ gives
$B_k(a)=\|P_a-P_0\|_{\TV}$, it follows that
\[
 B_k(a)
 =P_a\{S>kh(a)\}-P_0\{S>kh(a)\}.
\]
Under $P_a$, $X\sim\operatorname{Bin}(k,(1+a)/2)$, whereas under
$P_0$, $X\sim\operatorname{Bin}(k,1/2)$.  The preceding event
equivalences therefore prove~\eqref{eq:Bk-binomial-tail} for $a>0$.
At $a=0$, both binomial upper-tail probabilities are equal and $B_k(0)=0$,
so the identity also holds at that endpoint.
\end{proof}

\begin{lemma}[Derivative of $B_k$]
\label{lem:binomial-equivalent-derivative}
For every $k\in\mathbb N$, $B_k$ is locally absolutely continuous on $[0,1)$.
At every $a\in(0,1)$ for which $r_k(a)$, equivalently $j_k(a)$, is
locally constant,
\begin{equation}
 B_k'(a)
 =\frac{k}{2}
 \Prob\left\{
 \operatorname{Bin}\left(k-1,\frac{1+a}{2}\right)=j_k(a)-1
 \right\}.
 \label{eq:Bk-derivative}
\end{equation}
Consequently, \eqref{eq:Bk-derivative} holds for almost every
$a\in(0,1)$.
\end{lemma}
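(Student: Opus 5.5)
We need to prove Lemma (Derivative of $B_k$): local absolute continuity and the derivative formula where $j_k$ is locally constant.

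The plan is to work from the binomial upper-tail representation \eqref{eq:Bk-binomial-tail} of \Cref{lem:h-equivalence}. For $a\in(0,1)$ we have $B_k(a)=F_{j}(p)-F_{j}(1/2)$, where $F_j(p):=\Prob\{\operatorname{Bin}(k,p)\geq j\}$, $p=(1+a)/2$ and $j=j_k(a)$.

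Since $h$ is continuous and strictly increasing by \Cref{lem:h-equivalence}, the map $a\mapsto kh(a)$ crosses each lattice point of $\{-k,-k+2,\ldots,k\}$ at most once. Hence $r_k$, and so $j_k$, is a step function with finitely many jumps on any compact subinterval $[0,a^*]\subset[0,1)$. On each open interval between jumps, $j$ is constant. There $B_k(a)=F_j((1+a)/2)-F_j(1/2)$ is a polynomial in $a$, hence smooth.

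For the derivative I would use the classical identity
\[
\frac{d}{dp}\Prob\{\operatorname{Bin}(k,p)\geq j\}=k\,\Prob\{\operatorname{Bin}(k-1,p)=j-1\},\qquad 1\leq j\leq k.
\]
It can be checked by differentiating the beta-integral representation $F_j(p)=k\binom{k-1}{j-1}\int_0^p s^{j-1}(1-s)^{k-j}\,ds$, or by telescoping the termwise derivatives. Multiplying by $dp/da=1/2$ gives \eqref{eq:Bk-derivative}. The cases $j=0$ (constant tail equal to $1$) and $j=k+1$ (tail equal to $0$) give derivative $0$, which is consistent with the atom being zero. The second term $F_j(1/2)$ does not depend on $a$ locally, so it contributes nothing.

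For absolute continuity on $[0,a^*]$, I need continuity at the finitely many jump points, and this is the main subtlety. At a jump point $a_0$ we have $kh(a_0)=r$, a lattice point, so $j$ changes from $(k+r)/2+1$ to $(k+r)/2$ or the reverse. The two candidate expressions differ by the atom difference
\[
\Prob\{\operatorname{Bin}(k,p)=j_0\}-\Prob\{\operatorname{Bin}(k,1/2)=j_0\},\qquad j_0=\tfrac{k+r}{2}.
\]
This difference vanishes exactly because $dP_a/dP_0=1$ on $\{S=kh(a_0)\}$, by the likelihood-ratio rewriting in \Cref{lem:h-equivalence}.

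Alternatively, continuity follows directly from $B_k(a)=\|P_a-P_0\|_{\TV}$. The map $a\mapsto P_a$ is polynomial in each atom, and TV is a finite sum of absolute values of polynomials, so it is continuous. Indeed $B_k$ is piecewise polynomial, hence Lipschitz on compacts and therefore locally absolutely continuous, which I would take as the cleanest route. Near $a=0$, $kh(a)<1$ for small $a$, so $j_k$ is constant there as well and nothing special happens.

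Finally, the jump set is locally finite, so it is countable and of measure zero. Hence \eqref{eq:Bk-derivative} holds almost everywhere on $(0,1)$.
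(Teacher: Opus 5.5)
Your proposal is correct and follows essentially the same route as the paper: the binomial upper-tail representation, piecewise constancy of $j_k$ from the strict monotonicity of $h$, the identity $\frac{d}{dp}\Prob\{\operatorname{Bin}(k,p)\geq j\}=k\,\Prob\{\operatorname{Bin}(k-1,p)=j-1\}$ combined with $dp/da=1/2$, and local absolute continuity from writing $B_k$ as a finite sum of absolute values of polynomials. The differences are cosmetic. The paper verifies the tail-derivative identity through the score expression $\E_p[(X-kp)\one\{X\geq j\}]$ and two binomial-coefficient identities, where you use the beta integral or telescoping. Your additional check that the atom difference vanishes at jump points, because $dP_a/dP_0=1$ on $\{S=kh(a_0)\}$, is valid but not needed given the absolute-value argument.
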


\begin{proof}[Proof of \Cref{lem:binomial-equivalent-derivative}]
Put $p=(1+a)/2$, and write $P_p$ and $\E_p$ for probability and
expectation when $X\sim\operatorname{Bin}(k,p)$.  By
\eqref{eq:Bk-binomial-tail}, $B_k(a)$ is the difference of the two
corresponding binomial upper-tail probabilities.

Because \(h\) is strictly increasing, the lattice threshold \(r_k(a)\)
is piecewise constant and changes at only finitely many values of \(a\).
Fix an open interval on which \(r_k(a)\), and hence \(j_k(a)\), has a
constant value \(j\).  On this interval, the second term in
\eqref{eq:Bk-binomial-tail} is constant.  Term-by-term differentiation of
the first, finite binomial sum gives
\begin{align}
 \frac{d}{dp}P_p\{X\geq j\}
 &=\sum_{x=j}^k
   \frac{d}{dp}\left\{\binom{k}{x}p^x(1-p)^{k-x}\right\}\notag \\
   &=\sum_{x=j}^k\binom{k}{x}p^x(1-p)^{k-x}
   \left\{\frac{x}{p}-\frac{k-x}{1-p}\right\}\notag\\
   &=\frac{1}{p(1-p)}
   \E_p[(X-kp)\one\{X\geq j\}].
 \label{eq:binomial-tail-score}
\end{align}
To evaluate the final expectation, the binomial identities
\[
 x\binom{k}{x}=k\binom{k-1}{x-1},
 \qquad
 (k-x)\binom{k}{x}=k\binom{k-1}{x}
\]
give
\begin{align*}
 \E_p[X\one\{X\geq j\}]
 &=kp\Prob\{\operatorname{Bin}(k-1,p)\geq j-1\},\\
 \E_p[(k-X)\one\{X\geq j\}]
 &=k(1-p)\Prob\{\operatorname{Bin}(k-1,p)\geq j\}.
\end{align*}
Since \(X-kp=(1-p)X-p(k-X)\), subtracting the two adjacent tails leaves
one atom:
\[
 \E_p[(X-kp)\one\{X\geq j\}]
 =kp(1-p)\Prob\{\operatorname{Bin}(k-1,p)=j-1\}.
\]
Substitution into \eqref{eq:binomial-tail-score} therefore yields
\[
 \frac{d}{dp}P_p\{X\geq j\}
 =k\Prob\{\operatorname{Bin}(k-1,p)=j-1\}.
\]
Finally, $dp/da=1/2$.  The chain rule gives
\eqref{eq:Bk-derivative} whenever the lattice threshold is locally
constant.  There are only finitely many exceptional points at which the threshold changes.  Moreover, from its
definition as total variation, \(B_k\) is a finite sum of absolute values
of analytic functions of $a$.  It is therefore continuous and locally
absolutely continuous on $[0,1)$.  Thus \eqref{eq:Bk-derivative} holds
almost everywhere.
\end{proof}

\begin{lemma}[Finite binomial atom bound]
\label{lem:binomial-atom}
For every finite \(A>0\), there exist constants \(c_A>0\) and
\(n_A\in\mathbb N\) such that, whenever \(n\geq n_A\),
\(p\in[1/3,2/3]\), and \(j\in\{0,\ldots,n\}\) satisfy
\(|j-np|\leq A\sqrt n\),
\[
 \Prob\{\operatorname{Bin}(n,p)=j\}
 \geq\frac{c_A}{\sqrt n}.
\]
\end{lemma}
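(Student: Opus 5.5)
The plan is to reduce the atom probability to a Stirling estimate, made uniform over the admissible window. Fix $A>0$, $n\geq n_A$, $p\in[1/3,2/3]$ and $j\in\{0,\ldots,n\}$ with $|j-np|\leq A\sqrt n$. Choose $n_A$ large enough that $A\sqrt n\leq n/6$. Then $j$ and $n-j$ are both at least $n/3-n/6=n/6\geq1$, so all three factorials below are nondegenerate and $j/n\in[1/6,5/6]$.

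Step 1: lower-bound the binomial coefficient. Write $\binom nj p^j(1-p)^{n-j}$ and apply Robbins' two-sided Stirling bounds, $\sqrt{2\pi m}\,(m/e)^m\le m!\le\sqrt{2\pi m}\,(m/e)^m e^{1/(12m)}$. Use the lower bound for $n!$ and the upper bounds for $j!$ and $(n-j)!$. This gives
\[
\binom nj p^j(1-p)^{n-j}\geq \sqrt{\frac{n}{2\pi j(n-j)}}\,e^{-1/6}\exp\{-n\,d_{\rm KL}(j/n\|p)\}.
\]
The prefactor is at least $\sqrt{2/\pi}\,e^{-1/6}/\sqrt n$, since $j(n-j)\leq n^2/4$. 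Some constant $e^{-c}$ could replace $e^{-1/6}$ here; the exact value only affects $c_A$.

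Step 2: control the exponent. Use the reverse-Pinsker bound $d_{\rm KL}(\hat p\|p)\leq(\hat p-p)^2/\{p(1-p)\}$, which is the $\chi^2$ upper bound on KL. Since $p(1-p)\geq2/9$ on $[1/3,2/3]$, we get
\[
n\,d_{\rm KL}(j/n\|p)\leq \frac{9}{2}\cdot\frac{(j-np)^2}{n}\leq \frac92A^2 .
\]
This yields $c_A=\sqrt{2/\pi}\,e^{-1/6-9A^2/2}$. The paper's stated constant, $\sqrt{2/\pi}\exp\{-2/3-8A^2/3\}$, is slightly sharper. To recover it I would instead use the second-order Taylor bound of KL around $p$, with remainder controlled on the window $\hat p\in[1/6,5/6]$, and track the Stirling error more carefully. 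For existence of some $c_A>0$, the crude version suffices.

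Step 3: verify the hypotheses. The only obstacle is keeping $j/n$ away from $\{0,1\}$, and the choice of $n_A$ in the first paragraph already guarantees this. With that in hand, the two estimates combine to $\Prob\{\operatorname{Bin}(n,p)=j\}\geq c_A/\sqrt n$ for all $n\geq n_A$, uniformly in $p$ and $j$. The uniformity over $p$ needs nothing further, because both $p(1-p)\geq2/9$ and the window bound hold for every $p\in[1/3,2/3]$.
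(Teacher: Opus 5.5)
Your proof is correct and follows the paper's route: Robbins' two-sided Stirling bounds, the prefactor bound $j(n-j)\le n^2/4$, the exact identity that turns the power factor times $p^j(1-p)^{n-j}$ into $\exp\{-n\,d_{\rm KL}(j/n\|p)\}$, and then a quadratic bound on that exponent. The only difference is in that last bound, and it has a cost: you use the $\chi^2$ inequality (the paper's own \eqref{eq:rademacher-kl-bound} after reparametrising) with $p(1-p)\ge 2/9$, whereas the paper uses a second-order Taylor bound with curvature $1/\{x(1-x)\}\le 16/3$ on the window $j/n\in[1/4,3/4]$. Your version needs no window beyond $1\le j\le n-1$, but it gives $9A^2/2$ rather than $8A^2/3$ in the exponent. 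That loss dominates your better Robbins remainder ($-1/6$ versus the paper's $-2/3$), so it is harmless for the lemma as stated but does not reproduce the explicit $c_A=\sqrt{2/\pi}\,e^{-17}$ that the proof of \Cref{prop:finite-binomial-inverse} takes from this argument.
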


\begin{proof}
Put \(q=j/n\).  The assumption
\(|j-np|\leq A\sqrt n\) implies that
\[
 |q-p|\leq\frac{A}{\sqrt n}.
\]
Choose
\[
 n_A=\left\lceil\max\{4,144A^2\}\right\rceil.
\]
If \(n\geq n_A\), then
\[
 |q-p|\leq\frac{A}{\sqrt n}\leq\frac1{12}.
\]
Since \(p\in[1/3,2/3]\), it follows that \(q\in[1/4,3/4]\).
Thus \(j\) and \(n-j\) are both positive and of order \(n\), and the
Stirling and Taylor bounds below hold uniformly.  In particular,
\(j,n-j\geq n/4\geq1\), so Robbins' formula applies to all three
factorials in \(n!/[j!(n-j)!]\) = $\binom{n}{j}$.

For every positive integer \(m\), write Robbins' formula as
\[
 m!=\sqrt{2\pi m}\left(\frac me\right)^m e^{r_m},
 \qquad
 \frac1{12m+1}<r_m<\frac1{12m}.
\]
Substitution into the binomial coefficient gives
\begin{align*}
 \binom nj
 &=
 \frac{1}{\sqrt{2\pi}}
 \sqrt{\frac{n}{j(n-j)}}
 \frac{n^n}{j^j(n-j)^{n-j}}
 \exp\{r_n-r_j-r_{n-j}\}.
\end{align*}
Here the factors involving \(e\) cancel because
\[
 e^{-n}e^je^{n-j}=1,
\]
while the three Robbins correction factors collect into
\(\exp\{r_n-r_j-r_{n-j}\}\).  For the square-root factor, the identities
\(j=qn\) and \(n-j=(1-q)n\) give
\[
 \frac{1}{\sqrt{2\pi}}\sqrt{\frac{n}{j(n-j)}}
 =
 \frac{1}{\sqrt{2\pi n q(1-q)}}.
\]
For the power factor, taking logarithms and making the same substitution
shows how the binary entropy arises:
\begin{align*}
 \log\left(\frac{n^n}{j^j(n-j)^{n-j}}\right)
 &=n\log n-j\log j-(n-j)\log(n-j)\\
 &=n\log n-qn\log(qn)-(1-q)n\log((1-q)n)\\
 &=n\log n-qn\{\log q+\log n\} -(1-q)n\{\log(1-q)+\log n\}\\
 &=-qn\log q-(1-q)n\log(1-q)\\
 &=nH(q),
\end{align*}
where \(H(q):=-q\log q-(1-q)\log(1-q)\).  Exponentiating therefore gives
\[
 \frac{n^n}{j^j(n-j)^{n-j}}=\exp\{nH(q)\}.
\]
Combining the square-root, power, and remainder factors yields
\[
 \binom nj
 =
 \frac{1}{\sqrt{2\pi n q(1-q)}}
 \exp\{nH(q)\}
 \exp\{r_n-r_j-r_{n-j}\}.
\]
Since \(q\in[1/4,3/4]\),
\[
 j=nq\geq\frac n4,
 \qquad
 n-j=n(1-q)\geq\frac n4.
\]
Moreover, \(r_n>0\), while the upper bound in Robbins' formula gives
\[
 r_j<\frac1{12j},
 \qquad
 r_{n-j}<\frac1{12(n-j)}.
\]
Consequently,
\begin{align*}
 r_n-r_j-r_{n-j}
 &>-\frac1{12j}-\frac1{12(n-j)}\geq-\frac1{3n}-\frac1{3n}=-\frac{2}{3n}\geq-\frac23.
\end{align*}
Also \(q(1-q)\leq1/4\), and hence
\begin{align*}
 \frac{1}{\sqrt{2\pi n q(1-q)}}
 &\geq\frac{1}{\sqrt{2\pi n(1/4)}}=\sqrt{\frac{2}{\pi n}}.
\end{align*}
Multiplying the preceding formula for \(\binom nj\) by
\(p^j(1-p)^{n-j}\), and using the exact identity
\begin{align*}
 H(q)+q\log p+(1-q)\log(1-p)
 &=-q\log\frac qp-(1-q)\log\frac{1-q}{1-p}=-\operatorname{kl}(q\Vert p),
\end{align*}
where \(\operatorname{kl}(q\Vert p)\) is the one-trial Bernoulli
relative entropy, therefore gives
\[
 \Prob\{\operatorname{Bin}(n,p)=j\}
 \geq\sqrt{\frac2\pi}\,\frac1{\sqrt n}
       \exp\left\{-\frac23-n\operatorname{kl}(q\Vert p)\right\}.
\]

For fixed \(p\), let \(f_p(x)=\operatorname{kl}(x\Vert p)\).  Then for all $x\in[1/4,3/4]$,
\[
 f_p(p)=f_p'(p)=0,
 \qquad
 f_p''(x)=\frac1{x(1-x)}\leq\frac{16}{3}.
\]
The segment between \(p\) and \(q\) lies in this interval, so Taylor's
theorem gives
\[
 \operatorname{kl}(q\Vert p)
 \leq\frac83(q-p)^2.
\]
Consequently \(n\operatorname{kl}(q\Vert p)\leq8A^2/3\), and the claimed
bound follows, for instance, with
\[
 c_A=\sqrt{\frac2\pi}
      \exp\left\{-\frac23-\frac83A^2\right\}.
\]
\end{proof}

\begin{lemma}
\label{lem:rr-confidence-lag-bounds}
At \(s_{0,1}=33R/10\), the quantities defined in
\eqref{eq:rr-profile-scalars}--\eqref{eq:rr-baseline-hinge} satisfy
\begin{equation}
 \begin{aligned}
 0.0356&<r<0.037, & 0&<p<0.0014,\\
 0.117&<w<0.124, & 0.0344875&<b<0.037.
 \end{aligned}
 \label{eq:rr-confidence-lag-bounds}
\end{equation}
\end{lemma}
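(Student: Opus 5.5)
The plan is a direct numerical verification. All four quantities in~\eqref{eq:rr-profile-scalars}--\eqref{eq:rr-baseline-hinge} are explicit functions of $q=1/R$ and $D$ at the integer lag $s_{0,1}=33R/10$, which is an integer because $R$ is divisible by ten. So it suffices to bound $\log(1-x)$ on both sides for $x\le 2/R\le 1/50$, and then use $D\ge64$ from \Cref{asm:1}. The bound on $r$ comes first, since $w$ and $b$ are controlled by $r$.

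\emph{Step 1 ($r$).} Write $\log r=3.3R\log(1-1/R)$. The upper bound $\log(1-x)<-x$ gives $r<e^{-3.3}\approx0.03688<0.037$. For the lower bound I will use $\log(1-x)\ge -x-\frac{x^2}{2(1-x)}$ for $0<x<1$. With $x=1/R\le1/100$ this yields
\[
\log r\ge -3.3-\frac{3.3}{2(R-1)}\ge -3.3-\frac{3.3}{198}.
\]
Hence $r\ge e^{-3.3167}\approx0.0363>0.0356$.

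\emph{Step 2 ($p$ and $w$).} Since $R>2$, we have $p=(1-2/R)^{3.3R}>0$. The bound $\log(1-x)<-x$ gives $p<e^{-6.6}\approx0.00136<0.0014$.

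For $w$, use the identity $w=s_{0,1}q(1-q)^{s_{0,1}-1}=3.3\,r/(1-q)$. Since $1-q\le1$, Step~1 gives $w\ge 3.3r>3.3\cdot0.0356=0.11748>0.117$. Since $1-q\ge0.99$, it also gives $w<3.3\cdot0.037/0.99\approx0.1233<0.124$.

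\emph{Step 3 ($b$).} By~\eqref{eq:rr-baseline-hinge}, $b=r(1-2/D)+p/D$.

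For the upper bound, note $p=(1-2q)^{s_{0,1}}\le(1-q)^{2s_{0,1}}=r^2<2r$. Therefore $p/D<2r/D$, so $b<r<0.037$.

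For the lower bound, drop the positive term $p/D$ and use $D\ge64$:
\[
b>r(1-1/32)>0.0356\cdot\tfrac{31}{32}=0.0344875.
\]
This bound is strict because the bound on $r$ is strict and $p>0$.

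\emph{Main obstacle.} The only delicate points are the constants that are nearly tight. The lower bound on $b$ is exactly $0.0356\cdot31/32$, so it relies on the strict inequality in Step~1 and on $D\ge64$. The lower bound on $r$ requires the second-order term in the logarithm expansion together with $R\ge100$. I would double-check these two numerical evaluations, $e^{-3.3167}$ and $e^{-3.3}$, to four significant digits, since everything else has visible slack.
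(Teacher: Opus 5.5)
Your proof is correct and follows essentially the same route as the paper: two-sided bounds on $\log(1-x)$, the identity $w=3.3\,r/(1-q)$, and $b=r(1-2/D)+p/D$ together with $p<2r$ and $D\geq64$. The differences are cosmetic. The paper uses the cruder lower bound $\log(1-x)\geq -x/(1-x)$, which with $R/(R-1)\leq100/99$ already gives $r\geq e^{-10/3}\approx0.03567>0.0356$, so your second-order refinement is valid but not actually needed. The paper also obtains $p<2r$ from $p<r$ rather than from your $p\leq r^2$.
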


\begin{proof}
For \(0<x<1\), the Fundamental Theorem of Calculus gives
\[
 \log(1-x)=-\int_0^x\frac{du}{1-u},
\]
and bounding the integrand on \([0,x]\) yields
\[
 -\frac{x}{1-x}\leq\log(1-x)\leq-x.
\]
Applying these bounds with \(x=1/R\), and using
\(R/(R-1)\leq100/99\), gives
\[
 e^{-10/3}\leq r=(1-1/R)^{33R/10}\leq e^{-3.3}.
\]
Since \(e^{-10/3}>0.0356\) and \(e^{-3.3}<0.037\), this proves the
stated bounds on \(r\). Applying the upper logarithmic bound with
\(x=2/R\) gives
\[
 0<p=(1-2/R)^{33R/10}\leq e^{-6.6}<0.0014.
\]
From the definition of \(w\),
\[
 w=\frac{33}{10}\frac{r}{1-1/R}.
\]
The bounds on \(r\) and \(1-1/R\geq0.99\) give
\(0.117<w<0.124\). Finally, \(p/2<r\), because
\(0<1-2/R<1-1/R\), and
\(1-2/D\geq31/32\) imply
\[
 \begin{aligned}
 b&=r\left(1-\frac2D\right)+\frac pD<r<0.037,\\
 b&>0.0356\frac{31}{32}=0.0344875.
 \end{aligned}
\]
\end{proof}

\begin{lemma}
\label{lem:rr-localised-lag-bounds}
Under Assumption~\ref{asm:1}, at $s_{0,2}=16R/5$, write
$r=r_{s_{0,2}}$, $p=p_{s_{0,2}}$, $w=w_{s_{0,2}}$,
$b=b_{s_{0,2}}$, and $\tau=p/(2w)$. Then
\begin{equation}
 \begin{aligned}
 e^{-320/99}&\leq r\leq e^{-16/5},\\
 e^{-320/49}&\leq p\leq e^{-32/5},\\
 \frac{16}{5}e^{-320/99}&\leq w\leq\frac{16}{5}e^{-319/100},\\
 0&<\tau<0.00658<10^{-2},\\
 \frac{31}{32}e^{-320/99}&\leq b\leq e^{-16/5}+\frac{e^{-32/5}}{64}.
 \end{aligned}
 \label{eq:rr-localised-lag-bounds}
\end{equation}
\end{lemma}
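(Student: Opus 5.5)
The plan mirrors the proof of \Cref{lem:rr-confidence-lag-bounds}, with $s_{0,2}=16R/5$ in place of $33R/10$. Write $s:=16R/5$, which is an integer because $R$ is divisible by ten. The basic tool is the two-sided logarithmic bound
\[
 -\frac{x}{1-x}\leq\log(1-x)\leq -x,\qquad 0<x<1.
\]
For $r=(1-1/R)^{s}$ we apply it with $x=1/R$. The upper side gives $r\leq e^{-16/5}$. The lower side gives $\log r\geq -\frac{16}{5}\cdot\frac{R}{R-1}$, and $R/(R-1)\leq 100/99$ for $R\geq100$, so $r\geq e^{-320/99}$.

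For $p=(1-2/R)^{s}$ we use $x=2/R$. The upper side gives $p\leq e^{-32/5}$. The lower side gives $\log p\geq -\frac{32}{5}\cdot\frac{R}{R-2}$, and $R/(R-2)\leq 100/98=50/49$, so $p\geq e^{-320/49}$. Both are positive.

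For $w=sq(1-q)^{s-1}$ we write $w=\frac{16}{5}\,r/(1-1/R)$. The lower bound is immediate from $r\geq e^{-320/99}$ and $1-1/R\leq1$. For the upper bound, $w=\frac{16}{5}(1-1/R)^{s-1}\leq\frac{16}{5}\exp\{-(s-1)/R\}=\frac{16}{5}e^{-16/5+1/R}$. Since $1/R\leq 1/100$, this is at most $\frac{16}{5}e^{-319/100}$.

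For $\tau=p/(2w)$, positivity is clear. For the upper bound, combine $p\leq e^{-32/5}$ with $w\geq\frac{16}{5}e^{-320/99}$:
\[
 \tau\leq \frac{5}{32}\exp\Big\{-\frac{32}{5}+\frac{320}{99}\Big\}.
\]
The exponent equals $-\frac{32}{5}\big(1-\frac{50}{99}\big)=-\frac{32\cdot49}{495}\approx-3.1677$. Since $e^{-3.1677}\approx 0.04211$, we get $\tau\lesssim 0.00658$. This is the only genuinely tight numerical step; I expect it to be the main obstacle. I would first check carefully that $\frac{5}{32}e^{-1568/495}<0.00658$, using the rigorous enclosure $e^{-3.1677}<0.04212$, which gives $0.15625\times0.04212\approx0.006581$. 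If the margin turns out too thin, I would sharpen the lower bound on $w$ using $w=\frac{16}{5}r/(1-1/R)\geq\frac{16}{5}\cdot\frac{100}{99}\,e^{-320/99}\cdot\frac{99}{100}$. Alternatively, I would bound the ratio $p/r=\big((1-2/R)/(1-1/R)\big)^{s}\leq (1-1/R)^{s}\leq e^{-16/5}$ directly. This gives $\tau=\frac{5}{32}(1-1/R)\,p/r\leq\frac{5}{32}e^{-16/5}\approx0.00635<0.00658$, so that route should be used, and it is the one I commit to. The bound $0.00658<10^{-2}$ is then trivial.

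Finally, $b=r(1-2/D)+p/D$. Since $D\geq64$, we have $1-2/D\geq31/32$, and $p\geq0$, so $b\geq\frac{31}{32}e^{-320/99}$. For the upper bound, $1-2/D\leq1$ and $1/D\leq1/64$ give $b\leq e^{-16/5}+e^{-32/5}/64$, using the bounds on $r$ and $p$ above. This completes all five lines of \eqref{eq:rr-localised-lag-bounds}.
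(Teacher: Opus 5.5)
Your proof is correct and is essentially the paper's argument: the two-sided bound $-x/(1-x)\leq\log(1-x)\leq -x$ with $R/(R-1)\leq 100/99$ and $R/(R-2)\leq 50/49$ gives $r$ and $p$, and the explicit formulas for $w$, $\tau$ and $b$, together with $R\geq100$ and $D\geq64$, give the rest. The only variation is at $\tau$: the separate bounds $p\leq e^{-32/5}$ and $w\geq\frac{16}{5}e^{-320/99}$ already give $\frac{5}{32}e^{-1568/495}\approx 0.006578<0.00658$, which is evidently where the stated constant comes from, so that route does close and your enclosure $0.04212$ was simply too coarse; your committed ratio bound $p/r\leq(1-1/R)^{s}$ instead gives the roomier $\frac{5}{32}e^{-16/5}\approx 0.00637$.
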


\begin{proof}
Apply the logarithmic inequalities used in the proof of
Lemma~\ref{lem:rr-confidence-lag-bounds}, now at $s_{0,2}=16R/5$,
using $R/(R-1)\leq100/99$ and $R/(R-2)\leq50/49$. This gives the
displayed bounds on $r$ and $p$. The remaining bounds follow from
\[
 w=\frac{16}{5}(1-1/R)^{16R/5-1},\qquad
 \tau=\frac{p}{2w},\qquad
 b=r\left(1-\frac2D\right)+\frac pD,
\]
using $R\geq100$ and $D\geq64$.
\end{proof}

\section{Constants for the rare-refresh checkerboard}
\label{app:rr-constants}

For every integer \(j\geq1\), define $\gamma_j:=\inf_{\deg(p)\leq j}
 \|\,|x|-p(x)\|_{\infty,[-1,1]}$ and $a_\star:=\inf_{j\geq1}j\gamma_j$. The finite-degree Bernstein bound gives
\(a_\star\geq1/\{4(1+\sqrt2)\}>0\). Fix the universal constants appearing in the formal results section: $c_{\rm acc}:=\frac{e^{-320/49}a_\star}{128},~ C_{\rm size}:=\frac{8\log80}{a_\star^2}$ and $c_{\rm inf}:=\left\{(16/5)e^{-320/99}\right\}^2a_\star^2(1-10^{-4})/(2^{15}e)$.

\end{document}